\documentclass[10pt]{article}
\usepackage{amsmath,amsfonts,amssymb,amsthm,fancyhdr,bm,mathtools,enumitem,bbm,stmaryrd}
\usepackage[a4paper, margin=2 cm]{geometry}
\usepackage[hyphens]{url}
\usepackage[hidelinks]{hyperref}
\usepackage{fullpage}
\usepackage[dvipsnames]{xcolor}
\usepackage[capitalize]{cleveref}
\usepackage[color=Purple!50!white,textsize=tiny]{todonotes}
\usepackage[numbers,sort]{natbib}
\usepackage{mathtools}
\usepackage{tikz,ifthen}
\usepackage{comment}
\usepackage{dsfont}

\usepackage{dingbat}
\usepackage{float}
\usepackage{graphicx}
\usepackage{caption}

\usetikzlibrary{decorations.markings,arrows.meta}
\tikzset{vert/.style={draw, fill=black, circle, inner sep=2pt}}

\newtheorem{theorem}{Theorem}[section]
\newtheorem{lemma}[theorem]{Lemma}

\newtheorem{conjecture}[theorem]{Conjecture}
\newtheorem{observation}[theorem]{Observation}

\newtheorem{claim}[theorem]{Claim}
\newtheorem{subclaim}[theorem]{Sub-Claim}
\newtheorem{fact}[theorem]{Fact}
\theoremstyle{definition}
\newtheorem{definition}[theorem]{Definition}

\crefname{equation}{equation}{equations}
\crefname{lemma}{Lemma}{Lemmas}
\crefname{proposition}{Proposition}{Propositions}
\crefname{claim}{Claim}{Claims}
\crefname{theorem}{Theorem}{Theorems}
\crefname{conjecture}{Conjecture}{Conjectures}
\crefname{figure}{Figure}{Figures}

\AddToHook{env/lemma/begin}{\crefalias{theorem}{lemma}}
\AddToHook{env/proposition/begin}{\crefalias{theorem}{proposition}}
\AddToHook{env/corollary/begin}{\crefalias{theorem}{corollary}}
\AddToHook{env/conjecture/begin}{\crefalias{theorem}{conjecture}}
\AddToHook{env/claim/begin}{\crefalias{theorem}{claim}}
\AddToHook{env/question/begin}{\crefalias{theorem}{question}} 
\AddToHook{env/definition/begin}{\crefalias{theorem}{definition}} 
\AddToHook{env/remark/begin}{\crefalias{theorem}{remark}} 

\newlist{lemenum}{enumerate}{1}
\setlist[lemenum]{label=(\alph*), ref=\thelemma(\alph*)}
\crefalias{lemenumi}{lemma}

\newcommand\ab[1]{\lvert#1\rvert}
\newcommand\bab[1]{\left\lvert#1\right\rvert}

\newcommand\eps{\varepsilon}

\let\leq\leqslant
\let\geq\geqslant

\title{Robustness and hyperstability for the Erd\H{o}s--Gallai theorem}
\author{Micha Christoph\thanks{Department of Mathematics, ETH Z\"{u}rich, 8006 Z\"urich, Switzerland. Email: \texttt{micha.christoph@math.ethz.ch}. Research supported by SNSF Ambizione Grant No. 216071.}\and Alp M\"uyesser\thanks{Mathematical Institute, University of Oxford, OX2 6GG, UK. Email: \texttt{alp.muyesser@maths.ox.ac.uk}} \and Yuval Wigderson\thanks{Institute for Theoretical Studies, ETH Z\"urich, 8006 Z\"urich, Switzerland. Email: \texttt{yuval.wigderson@eth-its.ethz.ch}. Research supported by Dr.\ Max R\"ossler, the Walter Haefner Foundation, and the ETH Z\"urich Foundation.}}
\date{}

\begin{document}

\maketitle
\begin{abstract}
    The Erd\H{o}s--Gallai theorem states that every graph of average degree $d$ contains a cycle of length at least $d$. We prove the following robust extension of the Erd\H{o}s--Gallai theorem: For every $c>0$ there exists $K$ such that for all $d\geq K$, $p\geq K/d$ and every graph $G$ with average degree $d$, the random graph $G_p$ obtained by independently sampling each edge of $G$ with probability $p$ contains a cycle of length at least $(1-c)d$ asymptotically almost surely as $|V(G)|\to \infty$. 
    With related methods, we prove the following hyperstability version of the Erd\H{o}s--Gallai theorem: any graph $G$ without a cycle of length at least $d$ is at most $c d|V(G)|$ edge deletions away from a graph all of whose connected components have a vertex-cover of size at most $d$.
    \par At the core of our argument lies a very general structure theorem about graphs that originates from results of Pokrovskiy concerning the hyperstability of bounded-degree trees. 
\end{abstract}
\section{Introduction}

One of the foundational results in extremal graph theory is the Erd\H os--Gallai \cite{MR114772} theorem, which states that if a graph $G$ has average degree $d\geq 2$, then it contains a cycle of length at least $\lfloor d \rfloor+1\geq d$ (and hence also a path of length at least $\lfloor d \rfloor$). This is easily seen to be essentially best possible, by taking $G$ to be a disjoint union of copies of $K_{d+1}$. 

Despite its innocuously simple statement, the Erd\H os--Gallai theorem is quite substantial, for a number of reasons. Firstly, unlike most other classical theorems of extremal graph theory (e.g.\ Tur\'an's theorem, the Erd\H os--Stone--Simonovits theorem, or the K\H ov\'ari--S\'os--Tur\'an theorem), the Erd\H os--Gallai theorem does not tell us \emph{which} subgraph we are guaranteed to find: it implies the existence of a cycle of length at least $d$, but not a cycle of any given fixed length. As is well-known, this feature is unavoidable in such a statement: there exist graphs of average degree $d$ but of arbitrarily large girth, hence it is impossible to guarantee any fixed cycle length in the setting of the Erd\H os--Gallai theorem.  
Secondly, both the statement and the proof of the Erd\H os--Gallai theorem are closely related to the study of Hamiltonicity, which is one of the most challenging and interesting areas in modern graph theory; indeed, the original proof of Erd\H os and Gallai used as a black box a result of Dirac \cite[Theorem 4]{MR47308} proved in his famous work on Hamiltonicity of high-degree graphs, and to the best of our knowledge, every known proof of the Erd\H os--Gallai theorem relies on such a result. Finally, the set of extremal examples in the Erd\H os--Gallai theorem is rather complicated. As a consequence, the optimal extremal result (obtained independently by Kopylov \cite{MR463030} and Woodall \cite{MR427143}) is quite difficult to prove. Moreover, unlike other extremal problems where the extremal structure is quite ``robust'' and exhibits strong stability properties, the stability theory for the Erd\H os--Gallai theorem is rather delicate, and the best known results \cite{MR3777040,MR3548292,MR4078813} only hold when the number of edges is extremely close to the maximum possible number. For more information on these points, we refer to \cite[Section 5]{MR3203598}.

Another related, and extremely influential, line of research concerns long paths and cycles in random graphs. Recall that the seminal work of Erd\H os and R\'enyi \cite{MR125031} demonstrates that the binomial random graph $G(n,p)$ exhibits a remarkable phase transition around $p=1/n$: if $p\leq (1-\delta)/n$ for any fixed $\delta>0$, then asymptotically almost surely $G(n,p)$ contains no component with more than $O_\delta (\log n)$ vertices, and in particular contains no path or cycle of more than that length. However, if $p \geq (1+\delta)/n$, then $G(n,p)$ typically contains a unique \emph{giant component} of order $\Omega_\delta(n)$, so it is natural to wonder whether it also contains a path or cycle of linear length. This question was one of the central open problems in the study of random graphs for over a decade, until it was finally resolved in the affirmative by Ajtai, Koml\'os, and Szemer\'edi \cite{MR602411}. Moreover, Fernandez de la Vega \cite{MR685900} showed that for every $\varepsilon>0$, there exists some $K=K(\varepsilon)>0$, such that $G(n,K/n)$ asymptotically almost surely contains a very long cycle, namely one of length at least $(1-\varepsilon)n$. The optimal dependence of $K$ on $\varepsilon$ was later resolved by Frieze \cite{MR842277}, building on earlier work of \cite{bollobas1984long,MR698649}.

In recent decades, there has been a great deal of interest in merging the classical studies of extremal graph theory and random graph theory. While there are a huge number of variants of such questions and results, including the randomly perturbed model \cite{bohman2003many}, local and global resilience of graph properties \cite{lee2012dirac}, and the study of general transference principles \cite{conlon2016combinatorial,MR3548528}, we focus here on a natural and well-studied notion called \emph{robustness}, and refer to Sudakov's survey \cite{MR3728112} (see also \cite{krivelevich2014robust, kelly2024optimal}) for more information and related concepts. The meta-question underlying the study of robustness is the following: given $p\in[0,1]$ and a graph $G$ with a certain property, does the property still hold almost always if we sample (or percolate) each edge of $G$ with probability $p$? Concretely, if we start with a graph $G$ of average degree $d$, and keep each edge randomly and independently with probability $p$, can we expect the resulting random subgraph $G_p$ to still have a cycle of length at least about $d$? 

Our first main result addresses this question. In particular, we settle a problem raised by Glebov, Naves, and Sudakov \cite[Section 4]{glebov2017threshold}.
\begin{theorem}\label{thm: main}
    For every $c>0$ there exists $K$ such that the following holds for all $d\geq K$ and $p\geq K/d$. Let $G$ be a graph on $n$ vertices with average degree $d$. Then, with probability tending to $1$ as $n\to \infty$, $G_p$ contains a cycle of length at least $(1-c)d$.

    In particular, if $G$ has average degree $d$ and $p = \omega(1/d)$, then $G_p$ contains a cycle of length at least $(1-o(1))d$ asymptotically almost surely.
\end{theorem}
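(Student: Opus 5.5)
We will derive \cref{thm: main} from a structural decomposition of $G$ and then handle each structure with a random-graph argument. First, pass to a subgraph of minimum degree at least $d/2$ and average degree $\geq(1-c/4)d$ by repeatedly deleting low-degree vertices; this costs little. Since $n\to\infty$ while $d$ is fixed or grows, we can afford to look for the long cycle in a single bounded-size (in terms of $d$) or well-structured piece. Following the Pokrovskiy-type structure theorem the abstract alludes to, we decompose $G$ (after deleting at most $c d n/8$ edges) into pieces each of which is one of two types: (i) an \emph{expander-like} piece $H$, in which every vertex set $X$ with $|X|\le (1-c/4)d$ has many neighbours outside $X$ (a robust sublinear expansion condition at scale $d$), or (ii) a piece with a small vertex cover, i.e.\ a set $A$ with $|A|\le(1+c/8)d$ covering almost all edges of the piece, resembling the Erd\H{o}s--Gallai extremal graphs $K_{d+1}$ or $K_{a}+\overline{K}_{m}$. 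Because $G$ has average degree $d$, some piece retains average degree $\ge(1-c/2)d$, and in fact a linear (in $n$) number of disjoint such pieces exist if no single piece is huge; this gives many independent trials, which is what makes the probability tend to $1$ as $n\to\infty$.

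For a type (ii) piece with cover $A$, $|A|\approx d$, and a large set $B$ of vertices each having $\gtrsim(1-c/2)d$ neighbours in $A$ (or $A$ itself dense), we find a long cycle in $G_p$ by alternating between $A$ and $B$: after percolation each $b\in B$ still has about $pd\ge K$ neighbours in $A$, and the auxiliary bipartite graph between $A$ and $B$ behaves like a random bipartite graph with degrees $\ge K$ on the $A$-side of size $\approx d$; a P\'osa rotation–extension / DFS argument (as in Krivelevich's DFS proof of long paths in $G(n,K/n)$) yields a path covering all but $c d/4$ vertices of $A$, hence a cycle of length $\ge(1-c)d$ after closing it using expansion. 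For a type (i) piece, we run the DFS argument directly: percolated expansion with $p\ge K/d$ shows that between any two disjoint sets of size $\approx cd/10$ in the piece there is an edge of $G_p$ with high probability, which by the standard DFS lemma produces a path of length $(1-c/2)d$ in $G_p$, and then a Pósa-type closing step (using that endpoints have many rotation-endpoints with edges between them) turns it into a cycle of length $\ge(1-c)d$.

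The probability bookkeeping: each piece succeeds with probability bounded below by some $q(c,K)>0$ (in fact close to $1$ when $K$ is large, via union bounds over $e^{O(d)}$ pairs of sets against failure probability $e^{-\Omega(pd\cdot cd)}$), and pieces are edge-disjoint so trials are independent; with $\Omega(n/d)$ or more pieces, or a single piece whose size grows with $n$, failure probability tends to $0$. The main obstacle I expect is establishing the structure theorem with the right quantitative form — specifically ensuring the dichotomy is at scale exactly $(1-o(1))d$ rather than $d/2$, since Erd\H{o}s--Gallai tightness means expansion must hold for sets up to size nearly $d$, and the ``small vertex cover'' case must capture all near-extremal configurations. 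I would first try to prove it by iteratively removing sets $X$ with $|N(X)|$ small and using that removed pieces have a vertex cover of size at most $|X|+|N(X)|$, mirroring Pokrovskiy's argument.
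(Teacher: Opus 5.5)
There is a genuine gap, and it lies in the case you treat as routine: the type (i) ``expander-like'' pieces. There you need every two disjoint sets of size about $cd/10$ in the piece to be joined by an edge of $G_p$, via a union bound over $e^{O(d)}$ pairs. That is only available when the piece is dense with $O(d)$ vertices. For a piece on $N\gg d$ vertices there are $e^{\Theta(d\log(N/d))}$ such pairs. More fundamentally, expansion of small sets does not give joinedness even in $G$ itself. A $d$-regular graph of huge girth, or the linear ``hypergraph of cliques'' from Section~\ref{sec:necessity}, satisfies your expansion condition with room to spare, yet has many pairs of disjoint $cd/10$-sets with no edge between them. So the DFS lemma cannot be applied, and the P\'osa closing step has nothing to work with. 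Also, for a single huge piece, a success bound of the form $1-e^{-\Omega(d)}$ does not tend to $1$ as $n\to\infty$ when $d$ is fixed.

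This is not a technicality. Section~\ref{sec:necessity} shows that a Pokrovskiy-type decomposition must contain, besides small-cover pieces and dense pieces of order $O(d)$, the ``nowhere dense'' and ``well-connected pieces'' types. These are exactly the sparse structures your type (i) absorbs, and no joinedness is available for them. Your two-type structure theorem is also left unproved. Your type (ii) step assumes every $b\in B$ has about $(1-c/2)d$ neighbours in $A$, which a small-cover piece of average degree $(1-c/2)d$ need not satisfy. The paper instead samples non-cover vertices to get an $O(d)$-vertex subgraph of average degree $(1-\tau)d$ (Lemma~\ref{lem: cover to dense}), then applies regularity and Erd\H{o}s--Gallai to the reduced graph (Lemma~\ref{lem: dense case}).

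The missing idea is how to exploit sparse structure robustly. The paper first runs the percolated DFS. Since at most $n-1$ queried edges are present, with high probability only $O(dn/K)$ edges are queried. The used edges form a forest $T\subseteq G_p$ that is a DFS-forest of the unqueried graph $G'$ (Fact~\ref{fact: DFS-forest}), and the edges of $G'$ are still independently random. Deterministic lemmas (Lemma~\ref{lem: nowhere dense case} and Lemma~\ref{lem:main cut-dense union}) show that nowhere-dense graphs of average degree $\eta d$, and well-connected-piece families, always contain an edge of $T$-length at least $d$. Any such edge that survives percolation closes a cycle of length at least $d$ with $T$. One then greedily extracts edge-disjoint families of three kinds: such long edges, large regular cores, and $O(d)$-vertex dense subgraphs (where regularity, joinedness and the union bound do work). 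If any family has at least $dn/K$ edges, there are $\Omega(n/d)$ independent trials (or $dn/K$ independent long edges), giving failure probability $e^{-\Omega(n)}$. Otherwise, applying Theorem~\ref{thm: structure} to the remainder shows it has too few edges, a contradiction.
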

Note that, by taking $p=1$, we asymptotically recover the Erd\H os--Gallai theorem, and by taking $G=K_{d+1}$, we recover the results discussed above about long cycles in binomial random graphs. Note too that the condition $p \geq K/d$ is best possible up to the value of $K$; for instance, if $G=K_{d+1}$, then roughly a $Ke^{-K}$-fraction of the vertices of $G_{K/d}$ have degree at most $1$, and hence any cycle necessarily has length at most $(1-\varepsilon)d$, for some constant $\varepsilon>0$. 

\cref{thm: main} is a substantial strengthening of a result of 
Krivelevich, Lee, and Sudakov \cite{MR3302900}, who were the first to study such questions. They proved a result like \cref{thm: main}, but under the much stronger assumption that the \emph{minimum} degree of $G$ is $d$, rather than the \emph{average} degree\footnote{Their result is also weaker than ours in that they only obtain a probability tending to $1$ as $d \to \infty$, rather than as $n \to \infty$. However, this is a much less significant weakness than the assumption of minimum degree, and it moreover seems likely that their proof could be modified to overcome this.}. A simpler proof of the same result was subsequently given by Riordan \cite{MR3275705}. Both proofs rely on the \emph{depth-first search} technique, which is also an important component of our proof, but they also both seem to crucially use the assumption of minimum degree. This is closely related to the fact that the Erd\H os--Gallai theorem is highly non-trivial, whereas it is an elementary exercise to prove that a graph of minimum degree $d$ contains a cycle of length at least $d+1$. 

\cref{thm: main} can be viewed as a way of saying that the conclusion of the Erd\H os--Gallai theorem is robust with respect to random edge deletions. Our next main result also demonstrates a certain form of robustness, but in this case it is robustness of the extremal examples. In order to explain what this means, let us recall that an example witnessing the tightness of the Erd\H os--Gallai theorem is a disjoint union of copies of $K_{d+1}$, but that this is far from the only example; for instance, a complete bipartite graph $K_{d/2,n}$ has average degree $(1-o(1))d$ as $n \to \infty$, but certainly contains no cycle of length more than $d$. There are other extremal and near-extremal examples (see e.g.\ \cite[Construction 5.3]{MR3203598}), but in all instances the inherent reason why there is no cycle of length more than $d$ appears to be that every connected component\footnote{More precisely, every block, that is, $2$-connected component.} contains a small vertex-cover, which witnesses that this component cannot contain a long cycle. The upshot of our second main theorem will be an explanation of this phenomenon, but before stating it, let us discuss stability a bit more broadly, to put our result into its proper context.

Generally, in extremal combinatorics, once one has proven an extremal statement (and identified the corresponding extremal structure), the natural next step is to prove \emph{stability}: if an object is close to the extremal size satisfying some property, then it should also be {close} (in some appropriate sense) to the extremal object. In addition to being interesting in their own right, such stability statements are often extremely useful in proving other results; we refer to the survey \cite[Section 5]{MR2866732} for more details on the so-called \emph{stability method} and its myriad applications. For many extremal problems, such as Tur\'an's theorem, we have access to strong stability statements, that prove structural similarity for graphs which contain a $(1-\varepsilon)$-fraction of the edges of the unique extremal example. However, for the Erd\H os--Gallai theorem, we only have access to much weaker results \cite{MR4078813,MR3548292,MR3777040}, e.g.\ only obtaining structural information for graphs that are within an additive $O(1)$ of the extremal average degree.

In additive combinatorics, such stability statements are usually called \emph{inverse theorems}, and it is common to distinguish between so-called 99\% inverse theorems and 1\% inverse theorems (see e.g.\ \cite[Section 1.5]{MR2931680}). In the former, one assumes that some quantity is very close to its extremal value, and hopes to deduce strong structural properties. In the latter, one instead only assumes that our quantity is a small constant fraction of the extremal value, and still hopes to obtain some (necessarily weak) structural information; usually the best one can hope for is some small amount of ``correlation'' with an extremal example. In general, the stability results common in extremal graph theory are 99\% statements: while we understand quite well what a triangle-free graph with $(\frac 14-\varepsilon)n^2$ edges should look like (it is close to a subgraph of the unique extremal structure, the complete bipartite graph $K_{n/2,n/2}$), there is essentially nothing interesting that can be said about a triangle-free graph with only $\varepsilon n^2$ edges. In extremal graph theory, there are essentially no meaningful 1\% stability theorems that are known. 

Our next main result demonstrates that for the Erd\H os--Gallai theorem, we have a ``hyperstability'' phenomenon, that captures the strongest parts of both the 99\% and the 1\% stability results.
\begin{theorem}\label{thm:EG hyperstability}
For every $c>0$ and every sufficiently large $d$, the following holds for every $n$-vertex graph $G$. Either $G$ contains a cycle of length at least $d$ or one can delete at most $c dn$ edges from $G$ to obtain a graph in which every connected component has a vertex-cover of order at most $d$. 
\end{theorem}
Note that, in contrast to the Erd\H os--Gallai theorem, we make \emph{no assumptions} on the average degree of $G$; that being said, the theorem is only interesting if $G$ contains more than $cdn$ edges, for otherwise we can delete all edges of $G$ in the second case. Note also that the theorem is asymptotically best possible since a disjoint union of copies of $K_{d-1}$ has no cycle of length at least $d$, and, no matter how one deletes $o(dn)$ edges, still yields a graph with a component whose smallest vertex-cover is of order $(1-o(1))d$. Thus, as promised, \cref{thm:EG hyperstability} captures the strongest aspects of both the 99\% and the 1\% stability regimes: the assumptions of the theorem are of the 1\% variety (since it yields a sensible statement even when the average degree of $G$ is an $\varepsilon$-fraction of the maximum possible, as given in the Erd\H os--Gallai theorem), but the conclusion is of the 99\% variety (since it essentially states that, up to a small amount of noise, $G$ is contained in a disjoint union of graphs with vertex-cover $(1+o(1))d$, as in the extremal example). We stress that while our theorem is not formally stronger than the other stability results for the Erd\H os--Gallai theorem \cite{MR4078813,MR3548292,MR3777040} (they give exact results and \cref{thm:EG hyperstability} is approximate), our \cref{thm:EG hyperstability} is applicable in a \emph{far} greater range of settings. 


We are aware of only one other example of such a hyperstability theorem in extremal combinatorics, namely a recent result of Pokrovskiy \cite{alexey}. Pokrovskiy proved a variant of \cref{thm:EG hyperstability}, where instead of avoiding a cycle of length at least $d$, $G$ instead avoids a copy of some bounded-degree $d$-vertex tree\footnote{Another difference is that he only obtains a cover of size $(2+o(1))d$, which is not tight in general; however, for many natural classes of trees, such as paths, his proof can be easily modified to yield a cover of size $(1+o(1))d$.}. In addition to inspiring the statement of \cref{thm:EG hyperstability}, Pokrovskiy's work also forms the crucial technical underpinning of both of our main results.

Indeed, the proofs of both \cref{thm: main,thm:EG hyperstability} rely heavily on a remarkable structure theorem, which yields an approximate decomposition of \emph{any} graph into four types of subgraphs. While each type is somewhat complex to understand, they are all amenable to study by some fairly standard techniques in extremal graph theory. For example, one of the types is a strong (local) expander, and there are many techniques for finding long cycles in expanders; the other three types are dense graphs overlapping in certain prescribed ways, for whose study one has access to powerful tools like Szemer\'edi's regularity lemma. While this structure theorem does not explicitly appear in \cite{alexey}, it is implicit in Pokrovskiy's work; one of the contributions of the present paper is to isolate it in as ``user-friendly'' a way as possible, which we do in Section~\ref{sec:pokrovskiy}.

For now, we merely state an informal version of Pokrovskiy's remarkable structure theorem. 

\begin{theorem}[Informal version of Theorem~\ref{thm: structure}]\label{thm: structure informal}
    Let $C>0$ and let $d$ be sufficiently large. For every $n$-vertex graph $G$,
    there is an edge-decomposition of $G$ into $G_0,G_1,G_2,G_3,G_4$ satisfying the following properties.
    \begin{description}
        \item[Exceptional edges] The graph $G_0$ contains at most $o(e(G))$ edges.
        \item[Small vertex-cover] Every component of $G_1$ has a vertex-cover of size at most $Cd$.
        \item[Large regular cores] $G_2$ is the union of edge-disjoint $\Omega(1)$-cut-dense\footnote{A graph $H$ is said to be $\rho$-cut-dense if, for all partitions $V(H)=A\sqcup B$, we have $e_H(A,B)\geq \rho\ab A \ab B$.} graphs $H$, each of which contains a $k$-regular subgraph of order at least $Cd$, where $k=\Theta(d)$ and $\ab{V(H)}= O(Cd)$.  
        \item[Nowhere dense] $G_3$ does not contain $O(d)$ vertices inducing $\Omega(d^2)$ edges. 
        \item[Well-connected pieces] $G_4$ is the edge-disjoint union of a collection $\mathcal H$ of graphs satisfying the following properties.
        \begin{enumerate}
            \item Each $H\in\mathcal H$ is $\Omega(d)$-connected and has diameter $O(1)$.
            \item $|H\cap H'|=o(d)$ for all $H,H' \in \mathcal H$.
            \item Every $H\in \mathcal H$ has $\Omega(d)$ vertices which intersect some other $H'\in\mathcal H$.
        \end{enumerate}
        
    \end{description}
\end{theorem}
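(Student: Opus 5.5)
We build the decomposition by iteratively peeling off structured pieces, following Pokrovskiy's approach; throughout, $\varepsilon \ll \rho \ll 1/C$ are small constants.

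\textbf{Step 1: reduce to dense local pieces.} Repeatedly find a set $S$ of $O(d)$ vertices with $e(G[S]) \geq \varepsilon d^2$. While such an $S$ exists, replace it by a maximal dense induced piece: grow $S$ greedily by adding vertices with many neighbours inside. Then pass to a cut-dense subgraph by iterated sparse-cut splitting. A standard lemma says every dense graph $F$ contains a $\rho$-cut-dense subgraph retaining most of its edges, obtained by recursively cutting along sparse cuts and charging the removed edges to $G_0$. Remove these edges and record the cut-dense piece $H$. When no such $S$ remains, whatever is left forms $G_3$, the nowhere dense part. The potential function is the number of edges. A sparse cut of a piece of size $O(d)$ costs at most $\rho\,|A||B|$ edges, and the recursion depth is bounded, so the total charged to $G_0$ is $o(e(G))$.

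\textbf{Step 2: classify the cut-dense pieces.} Take each cut-dense piece $H$ with $|V(H)| = O(Cd)$ and sort it into one of three types.
\begin{itemize}
\item If $H$ has a vertex cover of size at most $Cd$, it goes toward $G_1$.
\item If $H$ has no small vertex cover, it contains a large matching of size more than $Cd/2$. Combined with cut-density, this lets us extract a $k$-regular subgraph with $k = \Theta(d)$ on at least $Cd$ vertices. One route is to take a dense bipartite subgraph with a min-degree condition and apply a regularization lemma: a dense bipartite graph with min degree $\delta$ contains a $\Theta(\delta)$-regular subgraph on a constant fraction of its vertices. Such pieces go to $G_2$.
\item A piece of diameter $O(1)$ that is $\Omega(d)$-connected goes to the family $\mathcal H$.
\end{itemize}
Note that a $\rho$-cut-dense graph on $O(d)$ vertices has min degree $\Omega(\rho d)$ after a cleanup. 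It therefore has diameter $O(1/\rho)$, and it is $\Omega(d)$-connected, again after deleting few edges.

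\textbf{Step 3: enforce the overlap and component conditions.} This is the main obstacle. Pieces that are in $G_1$ individually may overlap and merge into components whose vertex cover is large. Conversely, condition 3 for $\mathcal H$ demands many intersections with other pieces.

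The resolution I would try is a further dichotomy on the auxiliary hypergraph of pieces. First, if two pieces share $\Omega(d)$ vertices, merge them: their union is still cut-dense, by a cut-density union lemma for pieces with large overlap. Repeat until pairwise overlaps are $o(d)$. This merging terminates because each merge reduces the number of pieces while sizes stay $O(Cd)$ by density counting. Next, a piece with fewer than $\Omega(d)$ vertices shared with others is nearly isolated. Delete the $o(d)\cdot O(d)$ edges at its shared vertices, charging them to $G_0$ against its $\Omega(d^2)$ edges, so that it becomes its own component. It then goes to $G_1$ or $G_2$ according to its vertex cover size. The remaining pieces all satisfy condition 3 and form $G_4$.

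The delicate accounting is that each deletion is charged to at least $\varepsilon^{-1}$ times as many retained edges. This keeps $|G_0| = o(e(G))$. I would verify it using the nowhere-dense threshold as the potential.
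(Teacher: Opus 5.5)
You have a genuine gap in Step~3, which is exactly the naive merging strategy that Section~\ref{sec:sketch} identifies as fatal. Merging two cut-dense pieces that share $\Omega(d)$ vertices can lose a constant factor of cut-density, and nothing bounds how many merges a single piece undergoes. So neither ``the union is still cut-dense'' nor ``sizes stay $O(Cd)$'' survives iteration.

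For instance, take a chain of $N$ cliques of order $d$ in which consecutive cliques share $\sigma d$ vertices, and suppose Step~1 returns (essentially) the individual cliques. Your rule glues them into one piece of order about $(1-\sigma)Nd$, with cut-density $O(1/N^2)$, diameter about $N$ and vertex-cover $\Omega(Nd)$. It meets no other piece, so you send it to $G_2$, for which it qualifies neither by cut-density nor by order. (Even in $K_{Cd/2,n}$ the merged piece has order $n$.)

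The missing idea is Pokrovskiy's clump potential. One fixes $G_2$ \emph{first}, as a maximal family of large regular cores. Then, rather than the cut-density of a merged piece, one tracks $k(K)$, the maximum number of vertex-disjoint $p^{13}m$-regular subgraphs of the blobs of a clump $K$. Alongside it one keeps a core $C(K)\supseteq M(K)$ on at most $4^{k(K)}m$ vertices that is $\kappa^{(10k(K))!}$-cut-dense; \cref{pok4.9} preserves this under merging, with $k$ subadditive. Maximality of $G_2$ forces $|V(M(K))|\leq Cd$, hence $k(K)\leq Chp^{-13}$, so the core stays uniformly cut-dense however many merges occur. Apart from at most $4pe(K)$ edges (\cref{pok4.6}), everything outside the core lies in the periphery $B(K)$. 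This periphery may be huge but is covered by $V(M(K))$ (\cref{pok4.2}). The output pieces are then $D(K)=C(K)\cup B(K)$ for $G_4$ (not the whole clump), and $B(K)$ minus overlaps for $G_1$.

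Step~2 has a second, independent gap: a cut-dense piece with no vertex-cover of size $Cd$ need not contain a $\Theta(d)$-regular subgraph on $Cd$ vertices. Take $K_{\theta s,(1-\theta)s}$ plus a perfect matching inside the larger side, with $s=3Cd$ and $\theta<1/9$. It is $\Omega(\theta)$-cut-dense and its minimum vertex-cover has $(1+\theta)s/2>Cd$ vertices. Yet every $k$-regular subgraph with $k\geq 2$ has at most $3\theta s<Cd$ vertices. The workable dichotomy is between a large regular core and an \emph{approximate} cover after discarding few edges, which is what $M(K)$, $B(K)$ and \cref{pok4.6} encode.

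Two smaller points. First, removing nearly isolated pieces must be iterated (the paper's maximal ordered subfamily $\mathcal K'$), since deleting one piece can make others nearly isolated. Second, in Step~1 you need $\rho\ll\varepsilon$, not $\varepsilon\ll\rho$, for the sparse-cut losses (at most $\rho s^2$ on an $s$-vertex spot) to be negligible against $\varepsilon d^2$.
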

\noindent We note that there is no assumption on $G$ in \cref{thm: structure informal}.

In order to help the reader digest the statement of \cref{thm: structure informal}, we now describe how to use this theorem in practice (the reader might also benefit from Section~\ref{sec:necessity} where we discuss why each of $G_1,G_2,G_3$ and $G_4$ is crucial to include in the statement). Specifically, we will sketch how to asymptotically deduce the Erd\H os--Gallai theorem from \cref{thm: structure informal}; while this proof is several orders of magnitude more complicated than the original proof of Erd\H os and Gallai, we think it is also instructive, as the proofs of both \cref{thm: main,thm:EG hyperstability} will follow essentially the same framework (recall that each of our main theorems is a strengthening of the Erd\H os--Gallai theorem). 

Concretely, let us fix $c>0$, and let $G$ be an $n$-vertex graph of average degree $d$. Our goal is to show that if $d$ is sufficiently large, then $G$ contains a cycle of length $(1-c)d$. Let us apply \cref{thm: structure informal} to $G$ with parameter $C=10$, to obtain an edge-partition $G=G_0 \cup G_1 \cup G_2 \cup G_3 \cup G_4$. Our strategy goes as follows: First, we show that each of $G_2,G_3$ and $G_4$ must be essentially empty if $G$ does not contain a cycle of length at least $d$. Then, it remains to address the case that $G_1$ contains almost all edges of $G$.

First, suppose that $G_2$ is non-empty, and fix one graph $H$ making up $G_2$. By definition (recalling that we chose $C=10$), we see that $H$ is $\Omega(1)$-cut-dense and has a $k$-regular subgraph $R$ of order at least $10d$, for some $k=\Theta(d)$. Let $V_{1/2}\subseteq V(H)$ be chosen by including every vertex with probability $1/2$ independently of all other vertices. First, Szemer\'edi's regularity lemma (see Lemma~\ref{lem: regularity}), together with the assumption that $R$ is $k$-regular, implies that, with high probability, we can find a collection $\mathcal P$ of $O(1)$ vertex-disjoint paths in $R[V_{1/2}]$ which collectively cover at least $(5-o(1))d$ vertices. Second, again using Szemer\'edi's regularity lemma, but this time together with the fact that $H$ is $\Omega(1)$-cut-dense, we conclude, with high probability, that we can connect the paths of $\mathcal P$ into a cycle through $H-V_{1/2}$. Since the total number of vertices in $\mathcal P$ is $(5-o(1))d$, we have found a cycle of length at least $(5-o(1))d$. We may thus assume that $G_2=\varnothing$.

Next, suppose that $e(G_3)\geq \eta dn/2$, so that $G_3$ has average degree at least $\eta d$, for some small $\eta>0$. Let $G_3'$ be a subgraph of $G_3$ of minimum degree at least $\eta d/2$, and note that $G_3'$ is nowhere dense just as $G_3$. Let us define a \emph{broom} to be a path one of whose endpoints has at least $\eta d/4$ neighbors not on the path. An elementary argument shows that a broom of maximal length in $G_3'$ yields a cycle of length $\omega(d)$. We may thus assume that $e(G_3)< \eta dn/2$.

Finally, let us suppose that $G_4$ is non-empty, and let $\mathcal H$ be the family of graphs comprising $G_4$. Let $F$ be the intersection graph of the elements of $\mathcal H$, that is, we put $(H,H') \in E(F)$ if and only if $V(H) \cap V(H') \neq \varnothing$. Since each $H$ in $\mathcal H$ has at least $\Omega(d)$ vertices intersecting some other $H'$, and since every such intersection contains at most $o(d)$ vertices, we see that the minimum degree of $F$ is $\omega(1)$. As a consequence, $F$ contains a cycle of length $\omega(1)$, so we may fix a cycle $H_1,H_2,\dots,H_t$ in $F$, where $t$ is at least some huge constant. As $(H_i,H_{i+1})$ is an edge of $F$ for all $i$ (with indices taken mod $t$), we may fix distinct\footnote{It is not trivial to ensure that these are distinct, a subtlety we ignore in this sketch.} vertices $v_i \in V(H_i)\cap V(H_{i+1})$. Now, since each $H_i$ is $\Omega(d)$-connected, we may pick a path $P_i$ in $H_i$ joining $v_{i-1}$ to $v_i$ (with indices mod $t$), and ensure that the length of $P_i$ is at least $\Omega(d)$, using the well-known fact that in every $k$-connected graph, every pair of vertices is joined by a path of length at least $k$. Moreover, we may pick these paths to be internally vertex-disjoint: having picked $P_1,\dots,P_i$, by the assumption that every pair of graphs in $H$ intersect in $o(d)$ vertices, we have still only used up $o(d)$ vertices\footnote{Again, some care must be taken here, since we actually have no upper bound on $t$, and hence this statement is false as written; again, we ignore the issue in this overview.} from $H_{i+1}$; upon removing them $H_{i+1}$ is still $\Omega(d)$-connected, hence we can pick the path $P_{i+1}$ disjoint from the previous ones. At the end of this process, we can concatenate the obtained paths to produce a cycle of length at least $\Omega(td)\geq d$. Thus, we may assume that $G_4 = \varnothing$.

In case we are not already done, we have found that $G_2$ and $G_4$ are empty, and that $e(G_0)+e(G_3) \leq \eta dn$. Since we may take $\eta$ arbitrarily small, let us assume that $G_1$ has average degree $(1-o(1))d$. This means that some connected component $G_1'$ of $G_1$ also has average degree at least $(1-o(1))d$, as well as a vertex-cover of order at most $10d$. It is not hard to deduce from this that $G_1'$ has a subgraph $F$ with $O(d)$ vertices, which still has average degree at least $(1-o(1))d$; indeed, one can obtain $F$ by randomly sampling the vertices of $G_1'$ that are not in the cover, keeping each independently with probability $\Theta(d/\ab{V(G_1')})$. It now remains to prove that if $F$ is a graph with average degree $(1-o(1))d$ and $O(d)$ vertices, then $F$ contains a cycle of length $(1-c)d$. In other words, we have reduced the Erd\H os--Gallai theorem to a dense version of itself: rather than needing to prove it for all graphs, we only need to prove it for graphs whose order is a constant multiple larger than their average degree. While this is not obviously an easier task, it turns out to be quite simple using powerful tools such as Szemer\'edi's regularity lemma.

Rather than dwelling on the details, let us emphasize the most important point about the above sketch, which also recurs in the proofs of \cref{thm: main,thm:EG hyperstability}. This is that $G_2,G_3$ and $G_4$ are rather different from $G_1$. In each of $G_2,G_3$ and $G_4$, we have access to a structure that allows us to find a great deal more than we expect: even if the average degree is only $\eta d$, we are still able to find within it a cycle of length at least $4d$. Thus, the only case in which we do not find a cycle much longer than necessary is if essentially all the edges of $G$ are in $G_1$. That is, either we win by a lot, or we can essentially reduce to the case that $G$ is a disjoint union of pieces with small vertex-cover. 

This indicates how we go about proving \cref{thm:EG hyperstability}. Indeed, the sketch above already gives (essentially) the statement of \cref{thm:EG hyperstability}. To prove \cref{thm: main}, we need to make sure that each of the four arguments above can be made ``robust'', i.e.\ can be made to work even in the random subgraph $G_p$. For the ``large regular cores'' and the ``small vertex-cover'' cases, a standard application of Szemer\'edi's regularity lemma facilitates our arguments. The other two cases, on the other hand, and especially the ``well-connected pieces'' case, turn out to be more subtle, and we only explain these complications in the main body of the paper.

\vspace{4mm}
\par \noindent \textbf{Organization of the paper.} Section~\ref{sec:pokrovskiy} states Theorem~\ref{thm: structure}, which is a formal version of Theorem~\ref{thm: structure informal}, and contains a discussion of the necessity of the four cases. In the next six sections we assemble the proofs of Theorems~\ref{thm: main} and \ref{thm:EG hyperstability}.  Section~\ref{sec:preliminaries} introduces the basics of the DFS algorithm, Szemer\'edi's regularity lemma and other useful tools. Each of the next four sections is devoted to analyzing the four graphs $G_1,G_2,G_3$ and $G_4$ produced by Theorem~\ref{thm: structure}. Section~\ref{sec:final} brings everything together and proves Theorems~\ref{thm: main} and \ref{thm:EG hyperstability}. We conclude the paper with some final remarks in Section~\ref{sec: concluding}. The proof of Theorem~\ref{thm: structure} is located in Appendix~\ref{sec:appendix}.

Throughout the paper, we omit floor and ceiling signs whenever they are not crucial.

\section{Pokrovskiy's structure theorem}\label{sec:pokrovskiy}
\par In this section, we introduce and discuss a formal version of Pokrovskiy's structure theorem. Since the proof in full detail is quite involved and closely follows \cite{alexey}, we present the proof in the appendix. However, we give an (admittedly superficial) sketch of the proof in Section~\ref{sec:sketch}, so as to give some context for the various structures that appear in the statement. In Section~\ref{sec:statement}, we state Theorem~\ref{thm: structure}, the formal version of Pokrovskiy's structure theorem. Finally, in Section~\ref{sec:necessity}, we discuss a sense in which the structure theorem is tight, meaning each of $G_1,G_2,G_3$ and $G_4$ is necessary to include in the decomposition.

\subsection{A sketch of the proof of the structure theorem}\label{sec:sketch}

If a graph of average degree $d$ contains no ``dense spots'' (meaning $O(d)$ vertices spanning $\Omega(d^2)$ edges), then Theorem~\ref{thm: structure informal} is of course trivial to prove as we may simply set $G=G_3$. So the crux of Pokrovskiy's structure theorem is reasoning about the potential ways such dense spots may interact in a given graph. Given a dense spot, we may find in it a cut-dense subgraph via standard arguments (see Lemma~\ref{pok3.14}). Let us informally call such a cut-dense graph a \textit{blob}, and let $G$ be a graph edge-decomposed into blobs as $\bigcup_{i\in [k]} K_i$. A natural idea is to try to merge blobs together whenever they intersect on a large number of vertices. The fatal problem with this idea is that the union of two $\rho$-cut-dense graphs $K_1,K_2$ is not necessarily $\rho$-cut-dense, even if $K_1$ and $K_2$ intersect non-trivially. Thus, each merge potentially costs us some cut-density. Furthermore, there is no natural way to upper bound how many times a blob can be involved in a potential merging operation, so at the end of this process the original blobs may turn into subgraphs that are quite far from being cut-dense.
\par Pokrovskiy follows such a merging strategy, but keeping track of a different parameter. Given a blob $K_i$, let us approximate $K_i$ via Szemer\'edi's regularity lemma, and let us record by $M(K_i)$ the largest matching in the ``cluster graph'' of $K_i$ (this matching in the cluster graph may be converted into a dense and regular subgraph of the original graph $K_i$). Importantly, the vertex-set of this matching corresponds to a vertex-cover of the cluster graph, which in turn must cover almost all edges of the original graph $K_i$. Pokrovskiy's key insight is that (roughly speaking) it is $M(K_i)$ that one ought to attempt to track throughout the merging process above. Supposing $K_i$ and $K_j$ meet on, say, $\eps d$ vertices, we have one of the following two scenarios:
\begin{enumerate}
    \item Either $|M(K_i\cup K_j)|$ grows significantly (say by $\eps d$ elements) in comparison to $|M(K_i)|$ and $|M(K_j)|$, in which case we make progress towards building a structure with ``large regular cores'',
    \item or $|M(K_i\cup K_j)|$ doesn't grow much, in which case it must still have a small approximate vertex-cover, and hence the resulting structure is suitable for consideration for the ``small vertex-cover'' or the ``well-connected pieces'' parts of the structure theorem.
\end{enumerate}
The key point is that the first case can only happen a bounded number of times before a large regular core would emerge, giving us a way to still say something about the cut-density of the resulting structure.

\subsection{Formal statement of the structure theorem}\label{sec:statement}

We are finally in a position to state Pokrovskiy's structure theorem. A glance at the statement of the following theorem will suffice to recognize that the dependencies between different parameters are rather complex. While the more formal phrasing of Theorem~\ref{thm: structure} allows for comparison of parameters between different cases, we do not actually use this. For us, it is only important that we can freely choose $C,J,\varepsilon$ and $\delta$, that we can ensure $\zeta\ll\lambda$ and $\gamma\ll\zeta,\beta$, and that $d$ is large compared to all other constants.
\begin{theorem}\label{thm: structure}
    For every $C,J,\varepsilon,{\delta}>0$, there exist $\lambda,q,M>0$, such that for every $\zeta>0$, there exists $\beta>0$ such that for every $\gamma>0$ there exists $\rho>0$ such that for every sufficiently large $d$ and every graph $G$, there is an edge-decomposition of $G$ into $G_0,G_1,G_2,G_3,G_4$ satisfying the following properties.
    \begin{description}
        \item[Exceptional edges] The graph $G_0$ contains at most $\varepsilon e(G)$ edges.
        \item[Small vertex-cover] Every component of $G_1$ has a vertex-cover of size at most $Cd$. 
        \item[Large regular cores] $G_2$ is the union of edge-disjoint $\rho$-cut-dense graphs $H$, each of which contains a $qd$-regular subgraph of order at least $Cd$, and $\ab{V(H)}\leq Md$.  
        \item[Nowhere dense] $G_3$ does not contain a set of at most $Jd$ vertices inducing at least ${\delta} d^2$ edges.
        \item[Well-connected pieces] $G_4$ is the edge-disjoint union of a collection $\mathcal H$ of graphs satisfying the following properties.
        \begin{enumerate}
            \item Each $H\in\mathcal H$ contains a subgraph $Q$ of size at least $\beta d$ which is $\beta$-cut-dense and every vertex in $V(H)\setminus V(Q)$ has at least $\beta d$ neighbors in $V(Q)$.
            \item Any two $H,H'\in\mathcal H$ intersect in at most $\zeta d$ vertices.
            \item Every $H\in \mathcal H$ has at least $\lambda d$ vertices which intersect some other $H'\in\mathcal H$.
            \item For every $H\in\mathcal H$ there are at most $1/\lambda$ other $H'\in\mathcal H$ which intersect $H$ in more than $\gamma d$ vertices.
        \end{enumerate}
    \end{description}
\end{theorem}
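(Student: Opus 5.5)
We prove the theorem by an iterative peeling procedure in the spirit of Pokrovskiy, tracking a potential given by matchings in reduced graphs. Throughout we fix a hierarchy of constants
\[
\rho\ll\gamma\ll\beta\ll\zeta\ll\lambda,1/q,1/M\ll\varepsilon,\delta,1/C,1/J .
\]

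\textbf{Step 1: extract blobs.} While the current graph $G'$ (initially $G$) contains a set $S$ of at most $Jd$ vertices with $e(G'[S])\geq\delta d^2$, we pass to a $\beta'$-cut-dense subgraph of $G'[S]$ with $\Omega(\delta d^2)$ edges. This is the standard lemma referenced as Lemma~\ref{pok3.14}: repeatedly split along sparse cuts, losing few edges. We record this subgraph as a blob and remove its edges. When the process stops, the leftover graph is nowhere dense in the required sense and becomes $G_3$. Every removed edge lies in some blob, with a small loss that we charge to $G_0$.

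\textbf{Step 2: merge blobs and case-split via the regularity matching.} For each blob or merged structure $K$, we apply the regularity lemma and let $\mu(K)$ be the size, measured in units of $d$, of a maximum matching in its cluster graph. The vertices of this matching form an approximate vertex-cover of $K$ of size about $2\mu(K)d$. We then repeatedly merge two structures that intersect in more than $\zeta d$ vertices.

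A merge either raises $\mu$ by at least $\eta$, for some $\eta$ depending on $\zeta$, or it does not. If $\mu$ reaches $C$, we use the matching to find a $qd$-regular subgraph of order at least $Cd$. This uses the standard fact that regular pairs contain almost-regular spanning subgraphs, and we trim to exact regularity. Such a structure, together with its cut-dense hull (which has at most $Md$ vertices, since only boundedly many merges occurred), goes into $G_2$. Cut-density drops by a bounded factor per merge, and the number of $\mu$-increasing merges is bounded by $C/\eta$; this is exactly why $\rho$ must be chosen after $\gamma$ and $\beta$.

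Merges that do not raise $\mu$ keep an approximate vertex-cover of size $O(Cd)$. For these, we track the cover instead of cut-density.

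\textbf{Step 3: sort the final structures.} At the end, any two structures intersect in at most $\zeta d$ vertices. Structures with fewer than $\lambda d$ vertices shared with other structures are nearly isolated. We discard their shared edges into $G_0$ (this is small because $\lambda\ll\varepsilon$) and place them, together with their bounded cover, into $G_1$. Taking components preserves the cover bound because we merged everything that overlapped heavily.

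The remaining structures satisfy conditions 2 and 3 of $G_4$. For condition 1, we take $Q$ to be a cut-dense core of the structure. Vertices with fewer than $\beta d$ neighbours in $Q$ are removed, and their edges go to $G_0$. Condition 4 would fail if $H$ met many $H'$ in more than $\gamma d$ vertices. We handle this by a second round of merging at threshold $\gamma d$ instead of $\zeta d$; the bounded potential $\mu$ limits how often this can repeat. This is the reason for the nested quantifiers $\zeta\to\beta\to\gamma\to\rho$.

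\textbf{Main obstacle.} The hardest part is the accounting. We must show that the total number of edges sent to $G_0$ across all these operations is at most $\varepsilon e(G)$. We must also show that merges which do not raise $\mu$ can still be organized so that the merged structure retains a cut-dense core with every vertex having $\beta d$ neighbours in it. For the edge accounting, we would first try a charging argument: each discarded edge is charged to a blob with $\Omega(d^2)$ edges, and each blob is charged at most a bounded number of times because $\mu$ is monotone and bounded by $C$.
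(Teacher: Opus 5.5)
Your outline matches the heuristic sketch in Section~\ref{sec:sketch}, but the steps that carry the proof are either left open or would not work as described.

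\textbf{The potential and the core.} A regularity-lemma matching of the \emph{whole} merged structure does not measure anything once merges that leave $\mu$ unchanged have made the structure large and sparse. For example, blobs inside $K_{A,B}$ with $\lvert A\rvert=O(d)$ and $\lvert B\rvert\to\infty$ all get merged. The merged graph has density $o(1)$, so its cluster graph is empty and $\mu$ is $0$ (so $\mu$ is not even monotone), although $A$ is a perfectly good cover.

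More seriously, you explicitly leave open how a merged structure keeps a cut-dense core, and this is the heart of the theorem. Without it you control none of the following:
\begin{itemize}
\item $\beta$;
\item the bound $\lvert V(H)\rvert\le Md$: merges that do not raise $\mu$ are unbounded in number, so ``only boundedly many merges occurred'' is false, and a bound via $C/\eta$ with $\eta=\eta(\zeta)$ would make $M$ depend on $\zeta$, which the quantifiers forbid;
\item $e(G_0)$: nothing bounds the edges lost when you delete vertices with fewer than $\beta d$ neighbours in the core.
\end{itemize}
The paper resolves all three with Pokrovskiy's clumps (Definition~\ref{pok4.1}).
\begin{itemize}
\item The potential is $k(K)$, the maximum number of vertex-disjoint $p^{13}m$-regular graphs each lying inside a \emph{single} original blob. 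Hence $q=p^{13}/h$ and $k(K)\le Chp^{-13}$ depend only on the first-level constants.
\item Lemma~\ref{pok4.9} guarantees that every merge keeps a core $C(K)\supseteq M(K)$ on at most $4^{k(K)}m$ vertices that is $\kappa^{(10k(K))!}$-cut-dense, however many merges have occurred.
\item The piece placed in $G_4$ is $D(K)=C(K)\cup B(K)$, not $K$.
\item Maximality of $k(K)$ together with R\"odl--Wysocka gives $\lvert E(K)\setminus E(B(K))\rvert\le 4p\,e(K)$ (Lemma~\ref{pok4.6}). This is the edge accounting you were missing.
\end{itemize}

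\textbf{Condition 4 and the $G_1$/$G_4$ split.} Your ``second round of merging at threshold $\gamma d$'' fails. Merging pieces that overlap in only about $\gamma d$ vertices costs a $\gamma$-dependent factor in cut-density (compare the factor $\gamma^{2kt^2}$ in Lemma~\ref{pok4.7}). So the result cannot go into $G_4$, because $\beta$ is fixed before $\gamma$. A single such merge also need not push $\mu$ up to $C$, so it need not qualify for $G_2$ either.

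The missing idea is to choose $G_2$ \emph{first}, as a maximal family of $\rho$-cut-dense graphs on at most $Md$ vertices containing a $qd$-regular subgraph of order at least $Cd$. Everything else is then done in $G\setminus G_2$, and condition~4 (as well as $\lvert V(M(K))\rvert\le Cd$) is \emph{derived} from this maximality. Suppose a clump $K$ met $t=Cp^{-3}h$ other clumps in at least $\gamma m$ vertices each. Then the graph $\bigcup_i\bigl(D(K_i)[I_i\cup V(C(K_i))]\cup H_i\bigr)$:
\begin{itemize}
\item is $\rho$-cut-dense by Lemma~\ref{pok4.7}, which is exactly where $\rho$ must depend on $\gamma$;
\item has at most $Md$ vertices;
\item contains a maximum $p^{13}m$-regular subgraph to which each of the $t$ blobs contributes at least $pm/20$ vertices (by Lemma~\ref{pok4.5}), so that subgraph has order at least $2Cd$.
\end{itemize}
This contradicts the maximality of $G_2$.

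Your Step~3 is also not well-founded. Removing weakly overlapping structures lowers the overlap counts of the remaining ones, so condition~3 can fail afterwards. Moreover, $G_1$-pieces that overlap lightly but non-trivially chain into components with unbounded covers. The paper handles both problems by choosing a maximal \emph{ordered} subfamily $\mathcal K'$ and deleting from each $K_i'$ the vertices it shares with later members. This makes the $J_i$ vertex-disjoint, at a cost of at most $\alpha hp^{-13}e(G)$ edges via Lemma~\ref{pok4.4}.
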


\subsection{Necessity of the four graphs in Theorem~\ref{thm: structure informal}}\label{sec:necessity}
We now briefly discuss why each of the four types of graphs given by \cref{thm: structure informal} is unavoidable. That is, no version of this theorem is true if we omit one of these four types. Note that we address the informal Theorem~\ref{thm: structure informal} here instead of Theorem~\ref{thm: structure}, to avoid flooding the presentation with parameters.

First, for the necessity of $G_1$, take $G$ to be a disjoint union of copies of $K_{Cd/2}$. Suppose there exists a decomposition $G=G_0\cup G_2\cup G_3 \cup G_4$. It is easily seen that $G_0$ and $G_3$ each contain $o(dn)=o(e(G))$ edges. Furthermore, $G$ does not contain a connected subgraph of order at least $Cd$, so that $G_2$ must be empty. To conclude, $G_4$ must also be empty, for the following reason. Suppose not and let $H\in\mathcal H$. Let $\mathcal H'\subseteq \mathcal H$ be the set of $H'\in\mathcal H$ which intersect $H$. The properties of $\mathcal H$ guarantee that $|\mathcal H'|=\omega(1)$, each $H'\in \mathcal H$ contains at least $\Omega(d)$ vertices and the graphs in $\mathcal H'$ pairwise intersect in at most $o(d)$ vertices. This is enough to conclude that there are in fact $\omega(d)$ vertices appearing in $\bigcup_{H'\in\mathcal H'}V(H')$, which are contained in the connected component of $G$ containing $H$. But $G$ does not have a connected component of size $\omega(d)$, yielding a contradiction.

For the necessity of $G_2$, take $G$ to be the complete bipartite graph with parts $A$ and $B$ such that $|A|=2Cd$ and $|B|=n$, where we choose $n$ large compared to $d$. Suppose there exists a decomposition $G=G_0\cup G_1\cup G_3\cup G_4$. By definition, $G_0$ contains at most $o(dn)$ edges and the same goes for $G_3$, since if there are $\Omega(d)$ vertices in $B$ with degree $\Omega(d)$ in $G_3$ then it has a dense spot after removing all the other vertices of $B$. If $G_4$ is not empty then the collection of graphs $\mathcal H$ associated with $G_4$ necessarily contains $\omega(1)$ graphs. Each $H\in\mathcal H$ intersects $A$ in at least $\Omega(d)$ vertices since $Q\subseteq H$ is of order $\Omega(d)$ and $\Omega(1)$-cut-dense. But then, there must be $H,H'\in\mathcal H$ which intersect in $\Omega(d)$ vertices, a contradiction with the definition of $G_4$. Thus, $G_4$ is empty. Finally, $G_1$ is easily seen to contain at most $Cd\cdot n$ edges. As $G$ contains more than $o(dn)+Cd\cdot n$ edges, such a decomposition cannot exist. 

Next, for the necessity of the nowhere dense graph $G_3$. It is well-known that there exists a graph $G$ on $n$ vertices with average degree $d$ and girth $\omega(d)$, where $n$ is much larger than $d$. Then $G$ itself is already nowhere dense, and we now argue why it is impossible to partition $G$ as $G_0 \cup G_1 \cup G_2 \cup G_4$. By definition, $G_0$ contains only $o(e(G))$ edges and $G_2$ as well as $G_4$ must be empty as they contain a cycle of length $O(1)$ if they are not empty. If $G_1$ contains more than $n-1$ edges then it contains a cycle, but note that any cycle in $G_1$ has length at most $2Cd$, so $G_1$ contains at most $n-1$ edges. We conclude that $G_3$ is necessary since $G$ contains more than $o(e(G))+n-1$ edges.

Finally, let us discuss the necessity of $G_4$. The construction showing the necessity of $G_4$ is more complicated than in the other cases, so, for the sake of brevity, our argument here resembles more a sketch than a formal proof. Let $n$ be large compared to $Cd/2$, and begin by taking $F$ to be a random $Cd/2$-uniform hypergraph on $n$ vertices, where each $Cd/2$-set is included as an edge with probability $cn^{2-Cd/2}$ for some small enough $c>0$. By deleting a small number of edges, we may assume that $F$ is linear, that the Berge girth of $F$ is $\omega(1)$, and that every vertex of $F$ lies in $\Theta(n/(Cd/2-1)!)$ hyperedges. Now, for every hyperedge of $F$, form a graph $H$ by putting a copy of $K_{Cd/2}$ on that vertex set, and let $G$ be the union of the resulting graphs. Then all of $G$ can be viewed as $G_4$ (the linearity means that $\ab{H \cap H'}\leq 1$ for all $H,H'$, and the minimum degree of $F$ implies that every vertex of any $H$ intersects another $H'\in \mathcal H$).

Suppose towards a contradiction that there is a decomposition $G=G_0\cup G_1\cup G_2\cup G_3$. The number of edges in $G$ is $\Theta(n^2/(Cd/2-2)!)$, which in particular implies that $G$ is not dense as we consider $d$ to be very slowly growing in $n$. As in the previous cases, it is easily verified that $G_0$ and $G_3$ contain at most $o(n^{2}/(Cd/2-2)!)$ edges. Moreover, while each $H \in \mathcal H$ has a vertex-cover of size $Cd/2$, one cannot put a substantial portion of $G$ into $G_1$ without deleting a lot of edges, since every vertex lies in $\Theta(n/(Cd/2-1)!)$ graphs $H \in \mathcal H$, hence most of its degree is to other $H'\in\mathcal H$. Finally, $G_2$ is empty, since $G$ does not contain a subgraph whose size is between $Cd$ and $O(Cd)$ which is cut-dense. Indeed, every subgraph of $G$ of order at least $Cd$ either is not dense or has a cut-vertex. If one takes the union of more than a large constant number of $H \in \mathcal H$, the result will not be dense, but if one takes too few of them then by the Berge girth condition together with the linearity of $F$, there is a cut-vertex.

\section{Tools for the proofs of Theorems~\ref{thm: main} and \ref{thm:EG hyperstability}}\label{sec:preliminaries}

In this section, we introduce the key lemmas and definitions that we combine with Theorem~\ref{thm: structure} to prove Theorems~\ref{thm: main} and \ref{thm:EG hyperstability}.

\subsection{DFS-forests}
    In this subsection, we define a special type of forest, termed DFS-forest, as well as related terminology. Then, we explain in the next subsection how this concept relates to the depth-first search algorithm.
    
    Let $T$ be a rooted forest.  We denote by $<_T$ the poset induced by $T$, namely the poset whose elements are $V(T)$, and in which $u<_T v$ if and only if $u\neq v$ and $u$ is an ancestor of $v$ in $T$. In other words, two vertices are comparable if and only if they are on the same branch of the rooted forest and the ``smaller'' vertex is the one closer to the root.
    
    \par Posets induced by rooted forests have more structure than general posets; concretely, the following basic observation will be used repeatedly in the paper.
    \begin{observation}\label{key observation}
         If $u<_T w$ and $v<_T w$, then $u$ and $v$ are comparable.
    \end{observation}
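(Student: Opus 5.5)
The statement to prove is \cref{key observation}: if $u<_T w$ and $v<_T w$, then $u$ and $v$ are comparable. The plan is to reduce it to the fact that in a rooted forest every vertex has a unique path to the root of its tree, so the ancestors of a vertex lie on a single chain.

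First, fix the component (tree) of $T$ containing $w$, with root $r$. By definition, $u<_T w$ means that $u\neq w$ and $u$ is an ancestor of $w$, that is, $u$ lies on the unique path $P_w$ in $T$ from $r$ to $w$. The same holds for $v$. In particular, $u$ and $v$ lie in the same tree as $w$, so both are vertices of the path $P_w = r=x_0,x_1,\dots,x_m=w$.

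Next, write $u=x_i$ and $v=x_j$ with $i,j<m$. If $i=j$, then $u=v$; I will read ``comparable'' as including equality, which is the intended reading in the applications. Otherwise, by symmetry assume $i<j$. The initial segment $x_0,\dots,x_j$ of $P_w$ is a path from $r$ to $v$. By uniqueness of paths in a tree, it is exactly $P_v$. Hence $u=x_i$ lies on $P_v$ and $u\neq v$, so $u$ is an ancestor of $v$, which means $u<_T v$.

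The only step needing any care is this identification of the initial segment of $P_w$ with $P_v$. It follows immediately from acyclicity, which gives uniqueness of paths between two vertices in a tree. I expect no real obstacle beyond stating that ancestors of $w$ are totally ordered by their distance from the root.
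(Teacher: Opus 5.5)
Your proof is correct and takes the same approach as the paper: both $u$ and $v$ lie on the unique path from the root to $w$, and any two vertices on that path are comparable. You additionally spell out the step the paper leaves implicit, namely that the initial segment of that path ending at $v$ is the root-to-$v$ path, and you are right to count $u=v$ as comparable.
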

    \begin{proof}
        Both $u$ and $v$ must be on the unique path between $w$ and the root of its component of $T$, hence they are comparable.
    \end{proof}
    The following definition is central to our proof. 

    \begin{definition}
    Let $G$ be a graph.
        We say that $T$ is a \emph{DFS-forest} of $G$ if $T$ is a rooted forest with $V(G)\subseteq V(T)$ such that for every edge $uv$ of $G$, $u$ and $v$ are comparable with respect to $<_T$.
    \end{definition}
     In the next section, we discuss the DFS-algorithm which finds a DFS-forest of any graph. We would like to stress here that we do not require $T\subseteq G$, and in fact we do not even require $V(G)=V(T)$.

    Given a rooted tree $T$ and $V\subseteq V(T)$, we write $T\llbracket V\rrbracket\subseteq T$ for the minimum subtree of $T$ containing $V$, and declare the root of $T\llbracket V\rrbracket$ to be the vertex closest to the root of $T$. We also write $T_v$ for the subtree of $T$ rooted at $v$ and $V_{T,v}$ for the descendants of $v$, that is $V_{T,v}=V(T_v)-v$. The \textit{$T$-distance} of two comparable vertices is defined to be the length of the path in $T$ connecting them. Similarly, the \textit{$T$-length} of an edge of $G$ is the $T$-distance between its two endpoints. We stress that whenever we refer to $T$-distance, it will only be between comparable vertices, and we do not consider the $T$-distance between incomparable vertices.

   We make the following observation: given a DFS-forest $T$ and an edge $e$ of $T$-length at least $d$, it follows that the unique cycle in $T\cup e$ has length at least $d$. This basic observation will drive our approach to the ``nowhere dense'' and ``well-connected pieces'' cases of the structure theorem; in order to find a long cycle in such graphs $G_3, G_4$, we will aim to find an edge of large $T$-length for an appropriate DFS-forest $T$.
\subsection{The depth-first search (DFS) algorithm}

    We use both the standard version of the depth-first search algorithm as well as a variant which combines it with percolation. The depth-first search algorithm, or the DFS-algorithm, is the following procedure: We are given a graph $G$. Throughout, we maintain a stack of vertices $S$, a set of processed vertices $V$ and a set of unvisited vertices $U$, maintaining the invariant that these three sets partition $V(G)$. We begin with $U=V(G)$ and $S,V=\varnothing$. Then, we iterate the following for as long as $S\cup U$ is not empty. If $S$ is empty, then select an arbitrary vertex in $U$ and move it to the top of $S$. In the case that $S$ is not empty, let $v$ be the vertex at the top of $S$. If $v$ has a neighbor in $U$, then pick an arbitrary such neighbor and move it to the top of $S$. Otherwise, we remove $v$ from $S$ and add it to the set of processed vertices $V$.

    We say that the DFS-algorithm uses an edge $uv$ if at some point $v$ is at the top of $S$ and we move $u$ from $U$ to $S$. The following fact is useful when we use the DFS-algorithm to find long paths.
    \begin{fact}\label{fact: DFS-path}
        The stack $S$ is a path in $G$ at every step of the DFS-algorithm.
    \end{fact}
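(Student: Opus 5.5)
We argue by induction on the number of iterations of the DFS-algorithm, proving a slightly stronger invariant: at every step, the vertices of $S$, read from bottom to top, form a path in $G$, meaning that any two consecutive vertices of the stack are adjacent in $G$. The empty stack and a stack with one vertex are trivially paths (of length $0$), which covers the initial state.

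For the inductive step we go through the three possible operations.
\begin{itemize}
\item If $S$ is empty, we move an arbitrary vertex of $U$ onto $S$. The new stack has a single vertex, so it is a path.
\item If $v$ is the top of $S$ and has a neighbor $u\in U$, then $u$ is pushed. By induction the old stack is a path $x_1x_2\dots x_k$ with $x_k=v$. Since $uv\in E(G)$, the sequence $x_1\dots x_k u$ is a walk, and $u$ is distinct from all $x_i$ because $u\in U$ and $U$ is disjoint from $S$. Hence the new stack is a path.
\item If $v$ is popped, then removing the endpoint $x_k$ of the path $x_1\dots x_k$ leaves the path $x_1\dots x_{k-1}$, or the empty stack.
\end{itemize}

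Distinctness of the stack vertices is automatic, since a vertex enters $S$ only from $U$ and $V,S,U$ always partition $V(G)$; so no vertex is ever pushed twice. There is no real obstacle here. The only point needing care is this partition invariant, which guarantees that the walk formed by the stack has no repeated vertices and is therefore a path.
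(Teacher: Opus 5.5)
Your proof is correct: the induction over the three operations, with distinctness coming from the fact that pushed vertices are taken from $U$, which is disjoint from $S$, is the standard verification. The paper states this as a well-known fact without giving any proof, so your argument simply supplies the routine check it leaves implicit.
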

    
    Next, let us introduce the percolated DFS-algorithm. Suppose we have some graph $G$ and a random subgraph $G_p$. The percolated DFS-algorithm proceeds by running the usual DFS-algorithm on $G$ but whenever we would like to use an edge $uv$ of $G$, we query its presence in $G_p$ and can only move $u$ from $U$ to $S$ if $uv$ is present in $G_p$. Similarly to the standard DFS-algorithm, we can say that an edge $uv$ was used if at some point $v$ was at the top of $S$ and $u$ got moved from $U$ to $S$. Note that every queried edge gets used if and only if it is present in $G_p$. Additionally, we say that an edge $uv\in G$ was queried by the percolated DFS-algorithm if, at some point, the presence of $uv$ in $G_p$ was queried.
    The following fact extends the well-known observation that the edges used in the standard DFS-algorithm form a DFS-forest.
    \begin{fact}\label{fact: DFS-forest}
        Let $T\subseteq G_p$ be the graph on $V(G)$ containing the edges which are used by the percolated DFS-algorithm. Then, $T$ is a DFS-forest of the graph $G'$ consisting of the edges that were not queried.
    \end{fact}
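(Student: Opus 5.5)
We will show that at every moment $T$ satisfies a stronger invariant, and then read off \cref{fact: DFS-forest} at termination. Here $T$ consists of the edges of $G_p$ used so far, rooted at the vertices that were moved into an empty stack. The invariant is:
\begin{enumerate}
\item[(i)] the vertices of $S$, read from bottom to top, form a root-to-vertex path in $T$; in particular each vertex of $S$ is an ancestor of every vertex above it;
\item[(ii)] every edge $uv$ of $G$ with both endpoints in $S\cup V$ that has not been queried joins two $<_T$-comparable vertices.
\end{enumerate}

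The first step is to check that $T$ is a rooted forest. Each vertex enters $S$ exactly once, from $U$. Either it enters as a new root, when $S$ is empty, or it enters via a single used edge to the current top of $S$, which is already in $T$. So each vertex acquires at most one parent, and that parent was added earlier. This rules out cycles, and the roots are well defined. The same observation gives (i): a new vertex pushed onto the top of $S$ is a child of the previous top, and popping preserves the property.

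The main step is (ii), which we handle by tracking an edge $uv$ that is never queried. Suppose, without loss of generality, that $u$ enters $S$ before $v$. We claim that $u$ is still in $S$ when $v$ enters. Indeed, $u$ can leave $S$ only when it is at the top with no neighbour in $U$. At that moment $v$ would still be in $U$ and adjacent to $u$ in $G$, so the algorithm, which examines the neighbours of the top vertex in $U$, would have to deal with $v$. Here we must pin down the convention implicit in the percolated algorithm: a vertex is popped only once all of its edges to $U$ have been queried and found absent. Under that convention $uv$ would have been queried before $u$ is popped, contradicting our assumption. Hence $u\in S$ when $v$ is pushed. Since $v$ is pushed onto the top of $S$, (i) shows that $u$ is an ancestor of $v$, so $u<_T v$.

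Applying this to every non-queried edge, once the algorithm has terminated with $S=U=\varnothing$, shows that every edge of $G'$ joins $<_T$-comparable vertices. Moreover $V(G')\subseteq V(G)=V(T)$, so $T$ is a DFS-forest of $G'$. The only delicate point is the one above: we have to make explicit that, in the percolated algorithm, "$v$ has a neighbour in $U$" means a neighbour whose edge to $v$ has not yet been queried. A vertex is popped only after all its edges to $U$ have been queried and found absent. Once that is stated, the argument is the standard proof that depth-first search trees have no cross edges.
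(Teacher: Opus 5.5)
Your proof is correct and is essentially the paper's argument: the forest structure comes from each vertex receiving at most one earlier parent (the paper phrases this as $1$-degeneracy), and comparability comes from the stack being a root-to-top path in $T$, together with the fact that an unqueried edge cannot have one endpoint leave $S$ while the other is still in $U$. The only difference is cosmetic: you look at the moment the later endpoint is pushed, whereas the paper looks at the moment the first endpoint is popped and observes that the other must then lie in $S$. Your explicit statement of the querying convention spells out what the paper uses implicitly.
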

    \begin{proof}
        Observe, by ordering $V(G)$ in the order in which the vertices move from $U$ to $S$, that $T$ is $1$-degenerate. Thus, $T$ is a forest. Let us root each component of $T$ at the vertex which was the first to be moved to $S$. Consider an edge $uv\in E(G')$ and suppose that at some point, we move $u$ from $S$ to $V$ while $v\notin V$. Since $uv$ was not queried, it follows that $v\in S$. Then, the stack $S$, as it was right before moving $u$ to $V$, yields a path from the root of the component of $T$ containing $u$ to $u$ through $v$. It follows that $u$ and $v$ are comparable.
    \end{proof}
    A second crucial fact is that, while we cannot control how many edges the percolated DFS-algorithm queries, we do know that at most $n-1$ edges are used as $T$ is a forest.
    \begin{fact}\label{fact: DFS edges revealed}
        At most $n-1$ edges get used by the percolated DFS-algorithm.
    \end{fact}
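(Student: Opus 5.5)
The plan is to read the statement off the structure of the algorithm: an edge is \emph{used} only at the moment a vertex $u$ is moved from $U$ to $S$. Each vertex of $G$ starts in $U$ and never returns to $U$, since $U$ only loses vertices and vertices move from $S$ only to $V$. Hence each vertex is moved from $U$ to $S$ at most once. Moreover, each used edge $uv$ is associated with the vertex $u$ moved at that step, so distinct used edges correspond to distinct moved vertices. This gives an injection from used edges into $V(G)$ and already bounds their number by $n$.

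To improve the bound to $n-1$, I would observe that the first vertex ever moved to $S$ is moved while $S$ is empty. That move uses no edge, because the rule is to select an arbitrary vertex of $U$ when $S$ is empty. If $n\geq 1$, the injection therefore misses at least that first vertex, so at most $n-1$ edges are used. (If $n=0$ there is nothing to prove, though $n-1$ is then negative; the statement is meant for nonempty $G$.)

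Alternatively, and as the text just before the statement suggests, one can cite \cref{fact: DFS-forest}. Its proof shows that the used edges form a forest $T$ on $V(G)$, because ordering vertices by entry time makes $T$ $1$-degenerate. A forest on $n\geq 1$ vertices has at most $n-1$ edges. Either route is routine; the only point needing care is that a queried edge absent from $G_p$ does not move any vertex, so the injection is unaffected by percolation. I do not expect any genuine obstacle.
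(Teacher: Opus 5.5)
Your proposal is correct, and your second route is exactly the paper's justification: the used edges form the forest $T$ of \cref{fact: DFS-forest}, so there are at most $n-1$ of them. Your primary injection argument charges each used edge to the vertex it moves from $U$ to $S$, with the first vertex moved without using an edge. This is the entry-order $1$-degeneracy observation from the proof of that fact, applied directly, so it is the same idea in self-contained form.
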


\subsection{Szemer\'edi's regularity lemma and $m$-joinedness}
The arguments concerning $G_1$ and $G_2$ given by Theorem~\ref{thm: structure} build around the observation that both of these graphs can be treated like dense graphs. Arguably the most powerful tool applicable to dense graphs is Szemer\'edi's regularity lemma, which we introduce next.

For vertex sets $A,B$ in a graph $G$,
we define $d_G(A,B)=\frac{e_G(A,B)}{|A||B|}$.
\begin{definition}
    Given a graph $G$, a pair of disjoint subsets $A,B\subseteq V(G)$ is \textit{$\varepsilon$-regular} if for every $A'\subseteq A$ and $B'\subseteq B$ with $|A'|\geq\varepsilon|A|$ and $|B'|\geq\varepsilon|B|$ it holds that 
    $$
    |d_G(A,B)-d_G(A',B')|\leq\varepsilon.
    $$
\end{definition}
We state a well-known extension of Szemer\'edi's regularity lemma that refines an initially specified subset of vertices, and simultaneously regularizes two graphs on the same vertex set. An equipartition of a set $V$ into parts $V_0,\ldots, V_m$ is a partition of $V$ such that, for every $i$, $|V_i|\in\left\{\left\lfloor\frac{|V|}{m+1}\right\rfloor,\left\lceil\frac{|V|}{m+1}\right\rceil\right\}$.
\begin{lemma}[Szemer\'edi's regularity lemma]\label{lem: regularity}
    For every $\varepsilon,p>0$ and $L_0\in\mathbb N$ there exists $L\geq L_0$ such that the following holds. Let $G,H$ be two graphs on the same vertex set $V$ and let $U\subseteq V$ with $|U|\geq p|V|$. Then, there exists an equipartition $V=V_0\sqcup\ldots\sqcup V_m$ for some $L_0\leq m\leq L$ such that
    \begin{itemize} 
        \item for every $1\leq i\leq m$, $V_i$ is either contained in $U$ or disjoint from $U$, and 
        \item for every $1\leq i\leq m$ for all but at most $\varepsilon m$ choices of $1\leq j\leq m$ with $i\neq j$ we have that $(V_i,V_j)$ is $\varepsilon$-regular in $G$ and $H$.
    \end{itemize}
\end{lemma}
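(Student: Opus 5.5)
The plan is the standard energy-increment proof of Szemer\'edi's regularity lemma, run simultaneously for $G$ and $H$ and started from a partition that already respects $U$. Fix an auxiliary $\varepsilon' \ll \varepsilon, p$. For a partition $\mathcal P=\{W_1,\dots,W_k\}$ of $V$, define the joint index
\[
q(\mathcal P)=\sum_{i,j}\frac{|W_i||W_j|}{|V|^2}\bigl(d_G(W_i,W_j)^2+d_H(W_i,W_j)^2\bigr)\in[0,2].
\]
By Cauchy--Schwarz, $q$ does not decrease under refinement.

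\textbf{Initial partition.} Start from the partition $\{U, V\setminus U\}$. Chop each of these two sets into pieces of a common size $\approx |V|/L_0'$, for some $L_0'\ge L_0/p$, and put the leftover vertices into an exceptional set. Each non-exceptional part then lies inside $U$ or is disjoint from it, and the hypothesis $|U|\ge p|V|$ ensures both sides receive a number of parts proportional to their sizes.

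\textbf{Iteration step.} Suppose more than $\varepsilon' m^2$ pairs $(V_i,V_j)$ fail to be $\varepsilon'$-regular in $G$ or in $H$. For each such pair, pick the witnessing subsets $A'\subseteq V_i$ and $B'\subseteq V_j$ for whichever graph fails. Take the common refinement of all parts by their witness sets. This increases $q$ by at least $c\,\varepsilon'^5$, by the usual defect Cauchy--Schwarz computation, summed over the irregular pairs in whichever of $G,H$ carries at least half of them. Since the refinement only subdivides existing parts, every part still lies inside $U$ or outside it. Then re-equalise by cutting each part into pieces of a common size. Pieces inside $U$ and pieces outside $U$ are cut separately, and remainders go into the exceptional set. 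The exceptional set grows by at most an $\varepsilon'/2^{t}$-fraction at step $t$, so it stays of size at most $\varepsilon'|V|$. Because $q\le 2$, the process stops after at most $O(\varepsilon'^{-5})$ steps. This bounds the number of parts by a tower-type $L(\varepsilon,p,L_0)$, and at the end all but $\varepsilon' m^2$ pairs are $\varepsilon'$-regular in both graphs.

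\textbf{Converting to the required form.} Two features of the statement remain, and I expect this to be the main (if technical) obstacle. First, regularity is required \emph{per part}: every $V_i$ must have at most $\varepsilon m$ irregular partners. Second, the partition must be an exact equipartition whose only exceptional part is $V_0$, which has size $|V|/(m+1)$ and no requirements. For the first, call a part bad if it lies in more than $\sqrt{\varepsilon'}\,m$ irregular pairs; there are at most $2\sqrt{\varepsilon'}\,m$ bad parts. Dissolve the bad parts and the exceptional set, and redistribute their vertices evenly among the good parts. Vertices of $U$ go to good parts inside $U$ and the rest to good parts outside $U$; this is possible since both classes contain $\Omega(p)$-proportions of the good parts when $\varepsilon'\ll p$. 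Each good part then changes in at most an $O(\sqrt{\varepsilon'}/p)$-fraction of its vertices. A regular pair remains $O((\varepsilon'/p^2)^{1/4})$-regular under such a small modification, by the standard slicing/perturbation estimate, which is below $\varepsilon$ for $\varepsilon'$ small. Finally, to get the exact equipartition with a designated $V_0$, declare one of the good parts $V_0$, after balancing the part sizes so that they differ by at most one. This part carries no constraints, so designating it loses nothing. Removing it from the count changes $m$ by one, which affects the ``all but $\varepsilon m$'' condition only negligibly since $m\ge L_0$ is large. This yields the lemma with $L$ depending only on $\varepsilon,p,L_0$.
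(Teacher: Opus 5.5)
The paper gives no proof of this lemma; it quotes it as a well-known extension of the regularity lemma, so your outline is being measured against the standard argument. Your iteration phase is fine, with the usual convention that exceptional vertices count as singletons in $q$, so that re-equalising is a refinement and $q$ never drops. The genuine gap is in the conversion step, at the claim that both classes contain $\Omega(p)$-proportions of the good parts. Only $|U|\ge p|V|$ is assumed. The complement $V\setminus U$ may be a nonempty set of arbitrarily small proportion $\theta$, and since $\varepsilon'$ and $L$ must depend only on $\varepsilon,p,L_0$, you cannot assume $\theta\gg\sqrt{\varepsilon'}$. If $\theta\lesssim\sqrt{\varepsilon'}$, nothing prevents the exceptional set and the at most $2\sqrt{\varepsilon'}m$ bad parts from covering all of $V\setminus U$. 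Those vertices then have nowhere to go: parts inside $U$ are forbidden by the first bullet, and $V_0$ holds only $|V|/(m+1)$ vertices, which can be far fewer than $\theta|V|$ since $m$ may be huge.

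There is a second problem even when both classes are large. Redistributing evenly within each class gives the two classes different common sizes, roughly $c(1+r_{\mathrm{in}}/|U|)$ and $c(1+r_{\mathrm{out}}/|V\setminus U|)$, where $r_{\mathrm{in}},r_{\mathrm{out}}$ count the dissolved vertices on each side. These can differ by order $\sqrt{\varepsilon'}\,c$ per part. Your final ``balancing'' cannot move vertices across classes without breaking the first bullet, and $V_0$ is again far too small to absorb the discrepancy.

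Both problems are repairable, but each needs an extra step. For the size mismatch, dissolve additional good parts in the class whose parts would otherwise end up smaller, until both classes reach the same common size. Each surviving part then changes by an $O(\sqrt{\varepsilon'}/\min\{p,\theta\})$-fraction.

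For the small complement, split into cases. Let $L^*$ be the bound your argument gives when $U=V$; there is a single class there, so even redistribution works as written.
\begin{itemize}
\item If $|V\setminus U|\le |V|/(2L^*)$, apply that case with $\varepsilon/2$ to $G[U],H[U]$. Put $V\setminus U$ into $V_0$, and move the surplus of the old exceptional part of $U$ into the other parts. This changes each part by at most a $1/L_0$-fraction, which is harmless once $L_0\gg\varepsilon^{-2}$; you may assume this, since enlarging $L_0$ only strengthens the statement.
\item Otherwise $|V\setminus U|\ge\theta_0|V|$ with $\theta_0=1/(2L^*)$, which depends only on $\varepsilon,p,L_0$. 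Your two-class argument then goes through with $\varepsilon'\ll\theta_0$, together with the equalisation above.
\end{itemize}
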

Percolated $\eps$-regular graphs maintain a notion of connectivity which we now introduce.

\begin{definition}
    A bipartite graph $G=(A,B)$ is called \textit{$m$-joined} if for every $A'\subseteq A$ and $B'\subseteq B$ with $|A'|=|B'|=m$, there is an edge in $G$ between $A'$ and $B'$.
\end{definition}

\begin{lemma}\label{lem: random joined}
    For every $\varepsilon>0$ the following holds for large enough $k$ and $t\geq k$. Let $(A,B)$ be an $\eps$-regular pair with $|A|,|B|=t$ of density at least $2\varepsilon$. For $p\geq k/t$, with probability at least $1-e^{-t}$, $(A,B)_p$ is $\varepsilon t$-joined.
\end{lemma}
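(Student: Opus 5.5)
The plan is a union bound over all pairs of $m$-sets combined with a lower bound on the number of edges between any two large subsets, which comes from regularity. Set $m=\varepsilon t$. Fix $A'\subseteq A$ and $B'\subseteq B$ with $|A'|=|B'|=\varepsilon t$. These sets are large enough for $\varepsilon$-regularity to apply, so $d_G(A',B')\geq d_G(A,B)-\varepsilon\geq \varepsilon$. Hence
\[
e_G(A',B')\geq \varepsilon\cdot(\varepsilon t)^2=\varepsilon^3t^2 .
\]
In $(A,B)_p$ each of these edges survives independently with probability $p$. So the probability that no edge survives between $A'$ and $B'$ is at most
\[
(1-p)^{\varepsilon^3t^2}\leq \exp(-p\varepsilon^3t^2)\leq \exp(-k\varepsilon^3 t),
\]
using $p\geq k/t$.

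Next we take a union bound over all choices of $A'$ and $B'$. There are at most $\binom{t}{\varepsilon t}^2\leq (e/\varepsilon)^{2\varepsilon t}$ such pairs. The failure probability is therefore at most
\[
\exp\bigl(2\varepsilon t\log(e/\varepsilon)-k\varepsilon^3t\bigr).
\]
If $k$ is large in terms of $\varepsilon$, for instance $k\geq \varepsilon^{-3}\bigl(1+2\varepsilon\log(e/\varepsilon)\bigr)$, this is at most $e^{-t}$, which is the required bound.

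The only point needing care is rounding when $\varepsilon t$ is not an integer. Since $t\geq k$ is large, we can take $m=\lceil\varepsilon t\rceil$. The sets still have size at least $\varepsilon t$, so regularity applies, and the binomial estimate only changes by constants that are absorbed into the choice of $k$. If $p>1$ is formally allowed, the statement is trivial, so we may assume $p\leq 1$. I do not expect any real obstacle: the main step is simply the observation that regularity gives $\Omega(\varepsilon^3t^2)$ edges between every pair of $\varepsilon t$-sets. That many edges makes the Chernoff-type exponent $pt^2\varepsilon^3\geq k\varepsilon^3t$ beat the entropy term of order $\varepsilon\log(1/\varepsilon)\,t$.
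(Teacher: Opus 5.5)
Your proof is correct and is essentially the paper's argument: regularity gives at least $\varepsilon^3t^2$ edges between any two $\varepsilon t$-sets, so each fixed pair has no surviving edge with probability at most $e^{-k\varepsilon^3 t}$, and a union bound over pairs finishes. The only difference is cosmetic: the paper bounds the number of pairs crudely by $4^t$ rather than by $\binom{t}{\varepsilon t}^2$.
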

\begin{proof}
    Let $A'\subseteq A,B'\subseteq B$ be of size exactly $\eps t$ each. By the regularity assumption, there are at least $\varepsilon\cdot \varepsilon^2t^2$ edges between $A'$ and $B'$. Therefore, the probability that at least one of them survives in $(A,B)_p$ is $1-(1-p)^{e(A',B')}\geq1-e^{-k\varepsilon^3t}$. Since there are at most $4^t$ choices of pairs $A'$ and $B'$, we get, choosing $k$ to be large enough, by a union bound that the statement holds with probability at least $1-e^{-t}$.
\end{proof}
It is well-known that $m$-joined graphs contain long paths. We give a proof for the sake of completeness.
\begin{lemma}\label{lem: DFS bipartite holes}
    Let $G=(A,B)$ be an $m$-joined, balanced bipartite graph. Then there exists a path in $G$ of length $|V(G)|-4m$.
\end{lemma}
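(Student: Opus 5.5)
We will run the DFS-algorithm on $G=(A,B)$ and track the sizes of the processed set $V$, the unvisited set $U$ and the stack $S$. By \cref{fact: DFS-path}, the stack $S$ is always a path in $G$. So it suffices to show that at some moment $|S|\geq |V(G)|-4m+1$, i.e.\ $S$ spans a path of length at least $|V(G)|-4m$. Write $|A|=|B|=t$, so $|V(G)|=2t$. We may assume $t>2m$, since otherwise the claim is trivial.

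The key invariant is that there are never any edges between $V$ and $U$. Indeed, a vertex is moved to $V$ only when it has no neighbor in $U$, and vertices only leave $U$, never enter it. Consequently, at every step, $m$-joinedness forbids simultaneously having $|V\cap A|\geq m$ and $|U\cap B|\geq m$, and symmetrically forbids having $|V\cap B|\geq m$ and $|U\cap A|\geq m$.

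Each step of the algorithm moves exactly one vertex, either from $U$ to $S$ or from $S$ to $V$. Hence $|V\cap A|$ and $|V\cap B|$ increase by at most one at a time, starting at $0$ and ending at $t$.

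Consider the first moment when $\max(|V\cap A|,|V\cap B|)=m$, say $|V\cap A|=m$ (the other case is symmetric). At that moment $|V\cap B|\leq m$, and by the invariant $|U\cap B|\leq m-1$. For the $A$-side we need more care. If $|U\cap A|\geq m$, then $|V\cap B|\leq m-1$ by the symmetric constraint. So either $|U\cap A|<m$, or $|V\cap B|<m$ and $U\cap A$ could be large.

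To handle the second case, we refine the choice of moment and use that $S$ is a path in a bipartite graph. Along a path, the vertices alternate between $A$ and $B$, so $\bigl||S\cap A|-|S\cap B|\bigr|\leq 1$. Therefore
\[
|S\cap A|\geq |S\cap B|-1 = t-|V\cap B|-|U\cap B|-1\geq t-2m-1+1 = t-2m,
\]
and thus $|S|\geq 2(t-2m)$, which is exactly a path on $2t-4m$ vertices.

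This gives length $2t-4m-1$, so we need to gain one more vertex somewhere. We would instead take the moment just before $|V\cap A|$ reaches $m$, where $|V\cap A|=m-1$ and the top vertex is about to be processed (so it is still in $S$). There $|V\cap B|\leq m$ and $|U\cap B|\leq m-1$ still hold, giving $|S\cap B|\geq t-2m+1$ and hence $|S|\geq 2t-4m+1$ vertices, as required.

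The main obstacle is precisely this off-by-one bookkeeping. We need to choose the step so that the invariant still gives $|U\cap B|\leq m-1$ while the bound on the $V$-part does not lose an extra vertex, and to use the alternation of $S$ to transfer the bound from one side to the other.
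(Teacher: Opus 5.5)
Your proof is correct and is essentially the paper's argument. Both run DFS and use three facts: the stack is a path, so it is balanced between $A$ and $B$ up to one vertex; there are no edges between processed and unvisited vertices; and $m$-joinedness then forces small sets on one side. The only difference is that you evaluate at the first time one side of $V$ reaches size $m$, rather than at the paper's moment $|U|=2m$, and this avoids the paper's case analysis.

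Your ``refinement'' is unnecessary. At that first moment you already have $|V\cap B|\le m-1$ and $|U\cap B|\le m-1$, so $|S\cap B|\ge t-2m+2$ and $|S|\ge 2|S\cap B|-1\ge 2t-4m+3$. The bound $|S|\geq 2(t-2m)$ you wrote was an arithmetic slip. If you keep the earlier moment anyway, note that $|U\cap B|\le m-1$ carries over to it only because the popping step does not change $U$.
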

\begin{proof} This follows by a standard argument using the (non-percolated) DFS-algorithm applied to $(A,B)$: Consider a time in the DFS-algorithm on $(A,B)$ where the size of $U$, the set of unvisited vertices, is exactly $2m$. Define $A'=A\cap U$ and $B'=B\cap U$. As the current stack $S$ is a path (see Fact~\ref{fact: DFS-path}), we may assume that it has size at most $|V(G)|-4m$. Therefore, there are at least $2m$ vertices which have been processed; we denote the processed vertices in $A$ by $A''$ and in $B$ by $B''$. Since there are no edges between processed and unvisited vertices, and by the $m$-joinedness assumption, one of $A',B''$ has size less than $m$ and also one of $B',A''$. Since there are exactly $2m$ vertices which are unvisited and also at least $2m$ vertices which are processed, it follows that without loss of generality, $|A'|,|A''|<m$. Using that $G$ is balanced bipartite, this implies that the stack contains at least $|A|-2(m-1)=(|V(G)|-4m)/2+2$ vertices of $A$ and at most $(|V(G)|-4m)/2-2$ vertices of $B$, which cannot be since the stack is a path. 
\end{proof}
We also give a slight variant of the above assertion.
\begin{lemma}\label{lem: regularity connector}
    Let $G$ be a graph on vertex set $U_1\cup\ldots\cup U_r$ such that for every $1\leq i< r$ the pair $(U_i,U_{i+1})$ is $m$-joined. Suppose further that each $U_i$ is of size at least $3m$ and let $Z\subseteq V(G)$ be of size at most $m$. Then there exists a path in $G$ from $U_1$ to $U_r$ avoiding $Z$ such that the $i$th vertex is in $U_i$ for all $i \in [r]$.
\end{lemma}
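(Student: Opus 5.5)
We will prove the statement by a backward ``reachability'' argument that uses $m$-joinedness to keep large sets of usable vertices alive from one layer to the next. For each $i$, let $U_i'=U_i\setminus Z$; since $|Z|\leq m$ and $|U_i|\geq 3m$, we have $|U_i'|\geq 2m$. Define sets $R_r\supseteq R_{r-1}\supseteq\ldots$ recursively from the end: let $R_r=U_r'$, and for $i<r$ let $R_i\subseteq U_i'$ be the set of vertices of $U_i'$ having a neighbour in $R_{i+1}$. The claim we aim for is that $|R_i|\geq |U_i'|-m+1\geq m+1$ for every $i$. This holds for $i=r$ trivially. For the inductive step, suppose $|R_{i+1}|\geq m$. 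If $|U_i'\setminus R_i|\geq m$, we pick $m$ vertices of $U_i'\setminus R_i$ and $m$ vertices of $R_{i+1}$. The $m$-joinedness of $(U_i,U_{i+1})$ then gives an edge between them, contradicting the definition of $R_i$. Hence $|U_i'\setminus R_i|<m$, so $|R_i|>|U_i'|-m\geq m$, which closes the induction. In particular $R_1\neq\varnothing$.

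The path is then built greedily forward. Pick $v_1\in R_1$. Given $v_i\in R_i$, by definition $v_i$ has a neighbour $v_{i+1}\in R_{i+1}$; choose it. This yields $v_1v_2\ldots v_r$ with $v_i\in U_i\setminus Z$, so the path avoids $Z$ and its $i$th vertex lies in $U_i$.

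It remains to check that this walk is actually a path, i.e.\ that the $v_i$ are distinct. This is immediate if the $U_i$ are pairwise disjoint, which is the intended reading since the statement speaks of a vertex-set $U_1\cup\ldots\cup U_r$ and the lemma is applied to regularity clusters. This is the only potential subtlety. If the sets could repeat or overlap, the fix would be to enlarge $Z$ dynamically, but that breaks the backward construction, which is not dynamic. So we interpret the $U_i$ as disjoint, and the proof of \cref{lem: regularity connector} is complete with no real obstacle beyond the counting in the inductive step.
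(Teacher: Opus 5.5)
Your proof is correct and is essentially the paper's proof run in reverse. The paper propagates forward sets $N_i\subseteq U_i\setminus Z$ of vertices reachable from $U_1\setminus Z$ and backtracks from a vertex of $N_r$; you propagate backward and then walk forward, using the same $m$-joinedness count in the inductive step. Your reading that the $U_i$ are disjoint is the right one: the paper assumes this implicitly, and it holds in both applications. The one small slip is the notation $R_r\supseteq R_{r-1}\supseteq\ldots$, which should not be a nested chain, since the $R_i$ lie in the different sets $U_i'$.
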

\begin{proof}
    For $1\leq i\leq r$, let $N_i\subseteq U_i\setminus Z$ be the vertices with neighbors in $N_{i-1}$, where $N_1=U_1\setminus Z$. We prove by induction on $i$ that $|N_i|\geq m$. The fact that $N_r$ is non-empty immediately implies the existence of the desired path. First, observe that $|N_1|\geq 2m$ so the base case holds. Next, suppose that the inductive hypothesis holds up to step $i-1\geq 1$, and we prove it for $i$. Indeed, the pair $(U_{i-1},U_i)$ is $m$-joined so there are at most $m$ vertices in $U_i$ which are not a neighbor of $N_{i-1}$. Thus, $|N_i|\geq |U_i|-|Z|-m\geq m$. 
\end{proof}

\subsection{Auxiliary results about the well-connected pieces case of Theorem~\ref{thm: structure}}
In Section~\ref{sec: well connected pieces} we will deal with graphs which have the structure of $G_4$ in Theorem~\ref{thm: structure}. To avoid cluttering that section, we record several preliminary results here. The \textit{intersection graph} of a hypergraph $\mathcal H$ is a graph on the edges of $\mathcal H$ where $e,e'\in\mathcal H$ are adjacent whenever $e$ and $e'$ intersect. Given a collection of graphs $\mathcal H$ as in the well-connected pieces case of Theorem~\ref{thm: structure}, one perspective from which we consider $\mathcal H$ is to look at the hypergraph on $V(G)$ where we place a hyperedge on $V(H)$ for every $H\in\mathcal H$. One important feature of this hypergraph is that its intersection graph robustly has high average degree in the sense of the following lemma.
\begin{lemma}\label{lem: average degree in cluster graph after removal}
    For every $\beta,\lambda,\gamma,\zeta>0$ with $\beta\leq \lambda/4$ and $\zeta\leq \lambda^2/8$ the following holds. Let $\mathcal H$ be a non-empty hypergraph such that the following hold.
    \begin{itemize}
        \item Any two $H,H'\in\mathcal H$ intersect in at most $\zeta d$ vertices.
        \item Every $H\in\mathcal H$ has at least $\lambda  d$ vertices which intersect some other $H'\in\mathcal H$;
        \item For every $H\in\mathcal H$ there are at most $1/\lambda $ other $H'\in\mathcal H$ which intersect $H$ in more than $\gamma d$ vertices.
    \end{itemize}
    Let $\mathcal H'$ be obtained by removing from every $H\in\mathcal H$ at most $\beta d$ vertices. Let $F$ be the intersection graph of $\mathcal H'$. Then $F$ contains at least $\lambda /(16\gamma)\cdot|\mathcal H|$ edges.
\end{lemma}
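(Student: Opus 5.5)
For a fixed $H\in\mathcal H$, write $H'$ for the corresponding trimmed edge of $\mathcal H'$. I would show that $H'$ has at least $\lambda/(8\gamma)$ neighbours in $F$. Summing over $H$ then counts each edge of $F$ at most twice, which gives the bound $\lambda/(16\gamma)\cdot|\mathcal H|$.

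Let $I(H)$ be the set of at least $\lambda d$ vertices of $H$ that lie in some other member of $\mathcal H$. The removal step deletes at most $\beta d\le \lambda d/4$ vertices from $H$, so at least $3\lambda d/4$ vertices of $I(H)$ survive in $H'$.

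Each surviving vertex $v$ lies in some other $H_v\in\mathcal H$. We would like $v$ also to survive in $H_v'$, but $H_v$ might have had exactly $v$ removed. To deal with this, charge the loss to the removed sets. For each other $K\in\mathcal H$, the vertices of $H'$ that are lost because they were removed from $K$ number at most $|V(H)\cap V(K)|$, and also at most $\beta d$. Summing $\min(|V(H)\cap V(K)|,\beta d)$ over all $K$ is not obviously small, so a cleaner route is needed. For each surviving $v\in I(H)$, call $v$ \emph{good} if some $K\ni v$ with $K\neq H$ still contains $v$ in $K'$. If $v$ is good, then $H'$ and $K'$ intersect.

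The difficulty is that a vertex could be bad, meaning it was removed from every other $K$ containing it. I expect this to be the main obstacle. Here is what I would try first. Split the neighbours $K$ of $H$ into two kinds. The heavy ones, with $|V(H)\cap V(K)|>\gamma d$, number at most $1/\lambda$, so together they cover at most $\zeta d/\lambda\le \lambda d/8$ vertices of $H$. Discard those vertices. At least $3\lambda d/4-\lambda d/8\ge \lambda d/2$ vertices of $I(H)$ then remain, and each of them is met only by light $K$, that is, those with intersection at most $\gamma d$.

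Bad vertices can be handled by double counting over the whole family rather than per $H$. Each $K$ loses at most $\beta d$ vertices, so the total number of incidences (vertex, $K$) destroyed is at most $\beta d|\mathcal H|$. Meanwhile, each remaining incidence pair $(H,v)$ needs just one surviving partner. If the local argument fails, I would instead prove the averaged statement $\sum_H \#\{\text{good } v\in H'\}\ge (\lambda d/2-\beta d)|\mathcal H|$, or something comparable, possibly after strengthening the condition $\beta\le\lambda/4$ as used.

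Once at least about $\lambda d/4$ good vertices per $H$ are available on average, I would group them by the witness $K$. Each light $K$ accounts for at most $\gamma d$ of them, so there are at least $\lambda/(4\gamma)$ distinct $K'$ adjacent to $H'$ on average. This yields at least $\tfrac12\cdot\lambda/(8\gamma)|\mathcal H|$ edges, which is the stated bound once the constants are adjusted.
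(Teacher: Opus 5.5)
Your overall route is essentially the paper's. You bound incidences globally rather than per $H$, which is necessary: if each vertex of $I(H)$ lies in exactly one other member $K$ and is deleted from that $K$, then $H'$ can be isolated in $F$. You then discard the at most $\zeta d/\lambda\le\lambda d/8$ vertices of $H$ lying in heavy partners, and group the remaining good vertices by light witnesses. The gap is that the step you flag as the main obstacle is never proved. The averaged inequality for good vertices is only announced (``or something comparable, possibly after strengthening $\beta\le\lambda/4$''). The two facts you list do not yet bound the number of bad pairs $(H,v)$ by the number of deleted incidences. These facts are that at most $\beta d|\mathcal H|$ incidences are destroyed, and that a pair needs only one surviving partner.

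The missing observation is an injective charging. If $v$ is bad for $H$, then $H'$ is the \emph{only} member of $\mathcal H'$ containing $v$, so $v$ is bad for at most one $H$. Since $v\in I(H)$, $v$ was also deleted from at least one other member. Hence bad pairs inject into deletions, and $\sum_H\#\{\text{bad } v\in H'\}\le\beta d|\mathcal H|$. The paper achieves the same thing with the set $U$ of vertices that keep more than half of their incidences. A vertex originally in at least two members that keeps more than half keeps at least two, so it automatically witnesses an intersection. A vertex outside $U$ has at most as many surviving incidences as deleted ones, so discarding it costs at most another $\beta d|\mathcal H|$.

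With this in hand your numbers close without changing the hypotheses. Summing over $H$ gives at least $(\lambda/2-\beta)d|\mathcal H|\ge\lambda d|\mathcal H|/4$ good vertices avoiding heavy partners. The per-$H$ bound $d_F(H')\ge g(H)/(\gamma d)$ is linear in the number $g(H)$ of such vertices, so averaging is legitimate and gives $e(F)\ge\lambda|\mathcal H|/(8\gamma)$. Because you discard heavy vertices per $H$ before averaging, you also avoid the paper's restriction to those $H$ with $d_{G''}(H)\ge\lambda d/4$.
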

\begin{proof}
    Let $V\subseteq V(\mathcal H)$ be the set of vertices which appear in at least $2$ hyperedges. Let $G$ be the bipartite graph with vertex set $V\cup\mathcal H$ and we put an edge between $H\in\mathcal H$ and $v\in V$ if and only if $v\in H$. Note that by assumption every $H\in\mathcal H$ has degree at least $\lambda  d$ in $G$. Now, let $G'$ be obtained from $G$ by removing edges $vH$ if $v$ is removed from $H$ in $\mathcal H'$. Observe that $G'$ is obtained from $G$ by removing at most $\beta d$ edges incident to any $H\in\mathcal H$. Denote by $U\subseteq V$ the set of vertices which kept more than half of their incident edges and further define $G''=G'[U \cup \mathcal H]$. Thus we have removed at most $2\cdot \beta d\cdot |\mathcal H|$ many edges when passing from $G$ to $G''$. Note that $G$ has at least $\lambda  d\cdot |\mathcal H|$ edges so that $G''$ contains at least $\lambda  d|\mathcal H|/2$ edges since $\beta\leq\lambda/4$. Finally, we claim that if $H\in\mathcal H$ has degree $d_{G''}(H)\geq \lambda  d/4$ in $G''$, then $d_{F}(H)\geq d_{G''}(H)/(2\gamma d)$. We can then conclude using that
    $$
    2e(F)= \sum_{H\in\mathcal H'}d_F(H)\geq\sum_{H\in\mathcal H}d_{G''}(H)/(2\gamma d)\cdot\mathbf{1}_{d_{G''}(H)\geq \lambda  d/4}\geq (e(G'')-\lambda  d/4\cdot |\mathcal H|)/(2\gamma d)\geq \lambda /(8\gamma)\cdot |\mathcal H|.
    $$
    So, fix $H\in\mathcal H'$ with $d_{G''}(H)\geq\lambda  d/4$. As $\mathcal H'$ is obtained from $\mathcal H$ by removing vertices from hyperedges, it also holds that every $H'\in \mathcal H'$ intersects $H$ in at most $\zeta d$ vertices and there are at most $1/\lambda$ hyperedges $\mathcal H'$ which intersect $H$ in more than $\gamma d$ vertices. It follows, using $\zeta\leq\lambda^2/8$, that at least $d_{G''}(H)-\zeta d/\lambda \geq d_{G''}(H)/2$ vertices contained in $H$ are intersected by some $H'\in\mathcal H'$ which intersects $H$ in at most $\gamma d$ vertices. Therefore, there are at least $d_{G''}(H)/(2\gamma d)$ hyperedges $H'\in\mathcal H'$ which intersect $H$.
    \end{proof}
    The well-connected pieces case of Theorem~\ref{thm: structure} supplies more information than we need for our proof; the next lemma serves to extract the properties we need from this case.
    \begin{lemma}\label{lem: cut-dense properties}
    Let $H$ be a graph which contains a $\beta$-cut-dense subgraph $Q$ of size at least $\beta d$ and every vertex in $V(H)\setminus V(Q)$ has at least $\beta d$ neighbors in $V(Q)$. Then $H$ is $\beta^2 d/2$-connected and the diameter of $H$ is at most $2/\beta+2$.
\end{lemma}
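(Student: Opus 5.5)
We first prove that $Q$ is itself highly connected with small diameter, and then transfer both properties to $H$ using the fact that every outside vertex has at least $\beta d$ neighbours in $V(Q)$. Write $q=\ab{V(Q)}\geq \beta d$.

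\emph{Step 1: $Q$ is $\beta q/2$-connected.} Suppose $S\subseteq V(Q)$ with $\ab S<\beta q/2$ separates $Q-S$ into parts $A$ and $B$, so that $e_Q(A,B)=0$. Apply cut-density to the partition $V(Q)=(A\cup S)\sqcup B$. The only cross edges go between $S$ and $B$, so
\[
\beta\ab{A\cup S}\ab B\leq e_Q(S,B)\leq \ab S\ab B .
\]
This gives $\ab{A\cup S}\leq \ab S/\beta<q/2$. Symmetrically, $\ab{B\cup S}<q/2$. But these two sets together cover $V(Q)$, which is a contradiction. Hence $Q$ is $\beta q/2\geq\beta^2d/2$-connected.

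\emph{Step 2: the diameter of $Q$ is at most $2/\beta$.} Fix $v\in V(Q)$ and let $B_i$ be the ball of radius $i$ around $v$ in $Q$. Suppose $B_i\neq V(Q)$. Cut-density applied to $(B_i,V(Q)\setminus B_i)$ gives at least $\beta\ab{B_i}\ab{V(Q)\setminus B_i}$ cross edges. All of these go into $B_{i+1}\setminus B_i$, and each vertex there receives at most $\ab{B_i}$ of them. Therefore
\[
\ab{B_{i+1}\setminus B_i}\geq\beta\ab{V(Q)\setminus B_i},
\]
so the complement of $B_i$ shrinks by a factor $(1-\beta)$ at each step. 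This alone gives only a logarithmic bound, so we use a better argument. Take two vertices $u,w$ at distance $D$ in $Q$ and consider the BFS layers $L_0,\dots,L_D$ from $u$. Cut-density applied to $(B_i,V(Q)\setminus B_i)$ forces the number of edges between $L_i$ and $L_{i+1}$ to be at least $\beta\ab{B_i}\ab{V(Q)\setminus B_i}$, while it is at most $\ab{L_i}\ab{L_{i+1}}$. Using this for the middle cut, or simply noting that each vertex of $Q$ has degree at least $\beta(q-1)$ (cut off a singleton), the standard argument applies: vertices $x_0,x_3,x_6,\dots$ on a shortest path have pairwise disjoint neighbourhoods, each of size at least $\beta(q-1)$. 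Hence there are at most about $1/\beta$ such vertices, so $D\leq 3/\beta+O(1)$.

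To reach the sharper constant $2/\beta$, we instead look at the neighbourhoods of $x_0,x_2,x_4,\dots$. These neighbourhoods may meet only in consecutive pairs. Cut-density additionally controls how much they can overlap: the cut $(L_{\leq i},L_{>i})$ yields $\ab{L_i}\ab{L_{i+1}}\geq\beta\ab{B_i}\ab{V(Q)\setminus B_i}$, so middle layers cannot be small. Getting exactly $2/\beta$ is where we expect some fiddling. If necessary we accept the bound obtained from degree-disjointness of every second vertex of the path, as adapted in the next step.

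\emph{Step 3: transfer to $H$.} Connectivity: let $S\subseteq V(H)$ with $\ab S<\beta^2d/2$. Then $Q-S$ is connected by Step 1. Every vertex $x\in V(H)\setminus(V(Q)\cup S)$ has at least $\beta d-\ab S>0$ neighbours in $V(Q)\setminus S$, so $x$ attaches to $Q-S$. Hence $H-S$ is connected. We also need $\ab{V(H)}>\beta^2d/2$, which holds since $q\geq\beta d$. Diameter: any two vertices of $H$ are each adjacent to, or lie in, $V(Q)$. Joining them through $Q$ gives distance at most $\mathrm{diam}(Q)+2\leq 2/\beta+2$.

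The main obstacle is the exact diameter constant in Step 2. Everything else is routine.
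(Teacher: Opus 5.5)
Your Step 1 is correct. It is a symmetric variant of the paper's argument, which takes the smaller side $A$ of $Q-X$ and shows $e_Q(A,B)\geq \beta|A||B\cup X|-|X||A|>0$. Your transfer to $H$ in Step 3 is also fine.

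\textbf{The gap is in Step 2.} You never establish $\mathrm{diam}(Q)\leq 2/\beta$, and your fallback does not give it.
\begin{itemize}
\item Disjointness of the closed neighbourhoods of $x_0,x_3,x_6,\dots$ yields only $D<3/\beta$.
\item The neighbourhoods of $x_0,x_2,x_4,\dots$ are not disjoint, since consecutive ones may share vertices. So the ``every second vertex'' count gives only about $4/\beta$, which is worse.
\item No argument using only the minimum degree $\beta(q-1)$ can beat roughly $3/\beta$. There are $n$-vertex graphs of minimum degree $\delta$ with diameter about $3n/\delta$.
\end{itemize}
So, as written, you prove $\mathrm{diam}(H)<3/\beta+2$, not the stated $2/\beta+2$. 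That weaker bound would suffice for the later use in Section~\ref{sec: well connected pieces}, where only diameter at most $1/\beta'=2/\beta^2$ is needed. It is not the lemma, however.

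\textbf{The missing idea is already in the inequality you derived and then set aside,} namely $|B_{i+1}\setminus B_i|\geq \beta|V(Q)\setminus B_i|$. It looked ``only logarithmic'' because you were trying to make a single ball exhaust $V(Q)$. You only need each ball to contain more than half of $V(Q)$.
\begin{itemize}
\item While $|B_i|\leq q/2$, the complement has at least $q/2$ vertices, so each step adds at least $\beta q/2$ new vertices.
\item Hence the ball of radius $1/\beta$ around any vertex has more than $q/2$ vertices (ignoring rounding, as the paper does). Your multiplicative form gives the same after only about $\ln 2/\beta$ steps.
\item Balls of radius $1/\beta$ around any two vertices $u,w$ must then intersect, so $d_Q(u,w)\leq 2/\beta$.
\end{itemize}
This is exactly the paper's argument. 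With it, your Step 3 gives $\mathrm{diam}(H)\leq 2/\beta+2$.
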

\begin{proof}
We show that $Q$ is $\beta^2 d/2$-connected and has diameter at most $2/\beta$. The assertion then follows easily, since $H$ is obtained from $Q$ by adding vertices with at least $\beta d$ neighbors in $V(Q)$. This implies that the diameter of $H$ is at most two larger than that of $Q$ and that $H$ is also $\beta^2 d/2$-connected.

Set $m=|V(Q)|\geq \beta d$ and consider any tripartition $A\cup B\cup X$ of $ V(Q)$ with $|A|\leq |B|$ and $|X|<\beta^2 d/2$. By the cut-density of $Q$, there are at least $\beta|A||B\cup X|\geq \beta m|A|/2$ edges between $A$ and $B\cup X$ in $Q$. At most $|X||A|<\beta m|A|/2$ of those are incident to $X$ so that there is an edge between $A$ and $B$. It follows that $Q-X$ is connected, as claimed. For the second statement, let $v\in V(Q)$ be an arbitrary vertex. Denote by $N_i[v]$ the set of vertices at distance at most $i$ from $v$. Suppose that $|N_i[v]|\leq m/2$. Then, there are at least $\beta|N_i[v]|\cdot m/2$ edges leaving $N_i[v]$. Therefore, $N_{i+1}[v]$ contains at least $\beta m/2$ new vertices, i.e.\ vertices not present in $N_i[v]$. It follows that $|N_{1/\beta}[v]|> m/2$. As a consequence, the diameter of $Q$ is at most $2/\beta$, since $N_{1/\beta}[v]$ and $N_{1/\beta}[u]$ intersect for every $u,v\in V(Q)$.
\end{proof}
    We will also use the following two elementary and well-known graph-theoretic statements.
    \begin{lemma}\label{lem: get path from average degree}
    Let $F$ be a graph of average degree $k$. Then there exists $v\in V(F)$ and a path $P\subseteq F$ such that $v$ has at least $k/2$ neighbors in $P$.
\end{lemma}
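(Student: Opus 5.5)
The plan is to pass to a subgraph of large minimum degree and then use a longest path, in the spirit of the classical proof of Erd\H{o}s--Gallai / Dirac. First, since $F$ has average degree $k$, it contains a nonempty subgraph $F'$ with minimum degree greater than $k/2$. To get $F'$, repeatedly delete a vertex of degree at most $k/2$. Each deletion removes at most $k/2$ edges, and we start with $k|V(F)|/2$ edges. If the process deleted every vertex, it would remove at most $(k/2)\cdot|V(F)| = e(F)$ edges in total, with equality only in degenerate cases. To make this clean, delete vertices of degree strictly less than $k/2$ instead. Each deletion then removes fewer than $k/2$ edges, so when $e(F)>0$ the edge count cannot reach zero before the vertices run out. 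Hence the process stops at a nonempty subgraph $F'$ with $\delta(F')\geq k/2$. The case $k=0$ is trivial: take $P$ to be a single vertex $v$.

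Next, take a longest path $P=v_0v_1\dots v_\ell$ in $F'$ and let $v=v_0$ be an endpoint. By maximality of $P$, every neighbor of $v_0$ in $F'$ lies on $P$; otherwise we could extend $P$ by that neighbor. Therefore $v_0$ has at least $\delta(F')\geq k/2$ neighbors in $V(P)$. Since $P\subseteq F'\subseteq F$ and $v\in V(F)$, this gives the required vertex and path.

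The only point needing care is the strict-versus-nonstrict inequality in the deletion step. The statement has "$v$ has at least $k/2$ neighbors in $P$", which makes $v$ an endpoint of $P$ itself. This reading is allowed and matches how such statements are used, for example to obtain a broom or a long cycle $v_0\dots v_j v_0$ where $v_j$ is the furthest neighbor. No other obstacle is expected; the argument is elementary.
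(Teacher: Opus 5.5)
Your proposal is correct and is exactly the paper's argument: pass to a subgraph of minimum degree at least $k/2$, take a longest path, and use the fact that all neighbours of an endpoint lie on the path. Your deletion step (removing vertices of degree strictly less than $k/2$) correctly fills in the detail the paper leaves implicit. Note that the first sentence claims minimum degree \emph{greater than} $k/2$, while your argument proves only $\geq k/2$; the latter is all that is needed.
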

\begin{proof}
    By passing to a subgraph, we may assume that $F$ has minimum degree at least $k/2$. Now letting $P$ be any longest path, and $v$ one of its endpoints, gives the desired statement.
\end{proof}
\begin{lemma}\label{lem: independent set}
    Every graph $G$ contains an independent set of size at least $|V(G)|/(\Delta(G)+1)$.
\end{lemma}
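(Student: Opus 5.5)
The plan is the standard greedy argument, which produces an independent set directly by repeated selection and deletion. Set $\Delta=\Delta(G)$. We build an independent set $I$ step by step, maintaining a set $R$ of \emph{remaining} vertices, starting with $I=\varnothing$ and $R=V(G)$. As long as $R$ is non-empty, we choose an arbitrary vertex $v\in R$ and add it to $I$. We then delete from $R$ both $v$ and all of its neighbours in $G$.

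Two facts must be checked. First, $I$ is independent. When $v$ is added to $I$, every neighbour of $v$ is removed from $R$ at that moment, so no neighbour of $v$ can be selected later. Moreover, no previously chosen vertex was adjacent to $v$, since otherwise $v$ would already have been deleted from $R$. Second, each step removes at most $\Delta+1$ vertices from $R$, namely $v$ together with its at most $\Delta$ neighbours. The process stops only once $R=\varnothing$, so it runs for at least $|V(G)|/(\Delta+1)$ steps, and each step adds exactly one vertex to $I$. Hence $|I|\geq |V(G)|/(\Delta(G)+1)$.

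There is no real obstacle here. The only point needing care is the degenerate case $V(G)=\varnothing$, where the bound holds trivially. An alternative route would be to note that $G$ is properly $(\Delta+1)$-colourable by greedy colouring and then take the largest colour class, but the direct greedy argument above is shorter and is the one I would write.
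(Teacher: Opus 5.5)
Your argument is correct, but it takes a slightly different route from the paper. The paper greedily properly colours $G$ with $\Delta(G)+1$ colours and takes the largest colour class, so its bound comes from pigeonhole over a partition of $V(G)$ into $\Delta(G)+1$ independent sets. You instead build a single independent set directly, and bound the number of rounds by noting that each round deletes at most $\Delta(G)+1$ vertices.

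Your route actually shows slightly more: \emph{every} maximal independent set has size at least $|V(G)|/(\Delta(G)+1)$. Any maximal independent set can arise from your procedure by selecting its vertices in turn. Equivalently, its closed neighbourhood covers $V(G)$ and has size at most $|I|(\Delta(G)+1)$.

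The colouring route instead yields a partition into $\Delta(G)+1$ independent sets. That partition supports averaging arguments, e.g.\ finding a class that captures a $1/(\Delta(G)+1)$ fraction of any vertex weighting.

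The paper's only application, in the proof of Lemma~\ref{lem: long path in T}, needs just the size bound, so either proof serves equally well.
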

\begin{proof}
    By coloring greedily, we obtain a proper coloring of $G$ with $\Delta(G)+1$ colors, and hence the largest color class is an independent set of at least the desired size.
\end{proof}

\section{Small vertex-cover}
We start by showing graphs with small vertex-covers contain a dense subgraph with essentially the same average degree. The same argument is used in \cite{alexey}; we reproduce it below for the sake of completeness.
\begin{lemma}\label{lem: cover to dense}
    For every $C,\tau> 0$ there exists $\ell\in\mathbb{N}$ such that the following holds for large enough $d$. Let $G$ be a graph of average degree at least $d$ and with a vertex-cover of size at most $Cd$. Then there exists $G'\subseteq G$ with at most $\ell d$ vertices and average degree at least $(1-\tau)d$.
\end{lemma}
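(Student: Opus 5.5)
We would pass to a random subset of the independent side of a fixed vertex-cover, choosing a subset of \emph{fixed} size so that the vertex count is deterministic; linearity of expectation then gives the edge count, and no concentration inequality is needed. Fix a vertex-cover $X$ of $G$ with $|X|\leq Cd$, and let $Y=V(G)\setminus X$, which is an independent set. Write $n=|V(G)|$, so that $e(G)=e(G[X])+e_G(X,Y)\geq dn/2$. Set $s=\lceil 2C/\tau\rceil$ and $\ell=C+s$.

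If $|Y|\leq sd$, then $n\leq \ell d$ and we simply take $G'=G$. Otherwise, let $Y'\subseteq Y$ be a uniformly random subset of size exactly $sd$, and set $G'=G[X\cup Y']$. Then $|V(G')|=|X|+sd\leq \ell d$ deterministically. Since $Y$ is independent, every edge of $G'$ lies inside $X$ or between $X$ and $Y'$. Each $y\in Y$ lies in $Y'$ with probability $q:=sd/|Y|<1$, so by linearity of expectation
\[
\mathbb E\, e(G') = e(G[X]) + q\, e_G(X,Y) \geq q\bigl(e(G[X])+e_G(X,Y)\bigr) = q\,e(G) \geq q\,dn/2 .
\]
Hence some choice of $Y'$ gives $e(G')\geq q\,dn/2\geq q|Y|d/2 = sd^2/2$.

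It remains to compare this with $|V(G')|/2$ times $(1-\tau)d$. We have $|V(G')|\leq Cd+sd$, and by the choice of $s$ we have $Cd\leq \tau sd/2$. Therefore
\[
\frac{2e(G')}{|V(G')|}\geq \frac{sd^2}{(1+\tau/2)sd}\geq (1-\tau)d .
\]

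There is no serious obstacle here. The main point is to notice that lower-bounding $e(G[X])$ by $q\,e(G[X])$ costs nothing, so the edges inside the cover, which may be as many as about $C^2d^2$, cause no difficulty. The only other thing to watch is that $|X|$ must be negligible compared with $sd$, which is what forces $s$, and hence $\ell$, to be of order $C/\tau$. Sampling a fixed-size subset rather than sampling independently avoids any need to control the vertex count. Rounding, namely taking $sd$ to be an integer, is handled by $d$ being large.
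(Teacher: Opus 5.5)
Your proposal is correct and is essentially the paper's proof: the paper also keeps a vertex-cover of size at most $Cd$ in full, adds a uniformly random subset of fixed size $\ell' d$ of the remaining vertices (with $\ell'$ large compared to $C$ and $1/\tau$), and combines the fact that each edge survives with probability at least $\ell' d/N$ with $e(G)\geq dN/2$. Your choice $s=\lceil 2C/\tau\rceil$ simply makes the paper's ``$\ell'$ sufficiently large'' explicit; as in the paper, $\ell=C+s$ should be rounded up to an integer when $C$ is not one.
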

\begin{proof}
    Let $\ell'$ be large enough compared to $C$ and $1/\tau$ and we set $\ell=\ell'+C$. Let $V\subseteq V(G)$ be a minimum vertex-cover of $G$. By assumption, we have $|V|\leq Cd$. Set $N=|V(G)\setminus V|$. If $N<\ell'd$ then $|V(G)|\leq (\ell' +C)d= \ell d$, so the statement trivially follows for $G'=G$. Therefore, let us assume $N\geq \ell' d$. Let $G'=G[V\cup U]$, where $U$ is obtained by selecting uniformly at random $\ell' d$ vertices of $V(G)\setminus V$. Note that $G'$ is guaranteed to have at most $(C+\ell')d= \ell d$ vertices so it suffices to show the expected average degree of $G'$ is at least 
    $(1-\tau)d$ and apply the probabilistic method. Observe that every edge of $G$ is in $G'$ with probability at least $\ell' d/N$. Note that $e(G)\geq dn/2\geq dN/2$ and $|V(G')|\leq (C+\ell') d\leq \ell' d/(1-\tau)$, where we use that $\ell'$ is large compared to $C$ and $1/\tau$. It follows that the expectation of $e(G')$ is at least $\ell' d/N\cdot dN/2=\ell' d^2/2$. Therefore, the expected average degree of $G'$ is at least $\ell' d^2/|V(G')|\geq (1-\tau)d$.
\end{proof}
Below we show a version of our main theorem (Theorem~\ref{thm: main}) in the special case of dense graphs. Combined with the above lemma, this result handles the ``small vertex-cover'' case of the structure theorem.
\begin{lemma}\label{lem: dense case}
    For every $\tau,\ell>0$ the following holds for all large enough $K$ and every $d\geq K$ as well as $p\geq K/d$. Let $G$ be a graph of average degree at least $d$ on at most $\ell d$ vertices. Then $G_p$ contains a cycle of length at least $(1-\tau)d$ with probability at least $1-e^{-\Omega(d)}$.
\end{lemma}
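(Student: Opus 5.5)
The plan is to apply the regularity lemma to $G$ itself, find a long cycle in the cluster graph via the (exact) Erd\H{o}s--Gallai theorem, and then realize that cycle in $G_p$ using joinedness.

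\textbf{Setup.} Let $n=|V(G)|$. Since $G$ has average degree at least $d$, we have $d<n\le \ell d$, so $G$ has edge density at least $1/\ell$. Fix $\varepsilon\ll \tau,1/\ell$ and apply Lemma~\ref{lem: regularity} (with $H=G$, $U=V$) to obtain an equipartition $V_0,\dots,V_m$ with $L_0\le m\le L$. Each cluster then has size $t_0\approx n/(m+1)\ge d/(2L)$.

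\textbf{Reduced graph.} Form the reduced graph $R$ on $[m]$, where $ij\in E(R)$ if $(V_i,V_j)$ is $\varepsilon$-regular of density at least $\sqrt\varepsilon$. The edges discarded are of four kinds: those inside clusters, those touching $V_0$, those in irregular pairs, and those in sparse pairs. Together these number $O(\sqrt\varepsilon+1/L_0)n^2\le \tau dn/8$, since $n\le \ell d$. Counting the remaining edges shows that $R$ has average degree at least $(1-\tau/4)dm/n$. By the Erd\H{o}s--Gallai theorem (which we may use as a black box here), $R$ contains a cycle $X_1X_2\cdots X_s X_1$ with $s\ge (1-\tau/4)dm/n$. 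Its clusters cover at least $(1-\tau/3)d$ vertices of $G$.

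\textbf{Building the cycle in $G_p$.} Split each $X_i$ at random into three parts: a small reserve $W_i$ with $|W_i|=\sqrt\varepsilon\, t_0$, and two halves $X_i^-,X_i^+$ of the rest. By the slicing property of regular pairs, every pair among $(X_i^+,X_{i+1}^-)$, $(X_i^+,W_{i+1})$ and $(W_i,X_i^{\pm})$ that comes from an edge of $R$ is $2\varepsilon$-regular with density at least $\sqrt\varepsilon/2$, provided the piece sizes are at least $\varepsilon^{1/2}t_0$ (shrink $\varepsilon$ appropriately). Apply Lemma~\ref{lem: random joined} with $t\asymp \sqrt\varepsilon\, t_0$. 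We have $p\ge K/d\ge k/t$ once $K\gg kL/\sqrt\varepsilon$. Each of the $O(L^2)$ relevant pairs fails to be $\varepsilon' t$-joined in $G_p$ with probability at most $e^{-t}=e^{-\Omega(d)}$, so by a union bound all of them are joined with probability $1-e^{-\Omega(d)}$.

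On this event, Lemma~\ref{lem: DFS bipartite holes} yields, in each balanced pair $(X_i^+,X_{i+1}^-)$, a path $P_i$ covering all but $O(\varepsilon' t_0)$ of its vertices. We then splice consecutive paths: the end of $P_{i-1}$ in $X_i^-$ is joined to the start of $P_i$ in $X_i^+$ by a connecting path of bounded length through $W_i$, using Lemma~\ref{lem: regularity connector} on the chain $X_i^-,W_i,X_i^+$ (or a short chain through $W_{i+1}$). Here $Z$ is the set of vertices already used in reserves, which stays small because each connector uses $O(1)$ vertices. Going around the cycle of $R$ and closing up gives a cycle in $G_p$ of length at least $\sum_i |P_i|\ge (1-\tau/3-O(\sqrt\varepsilon))d\ge(1-\tau)d$.

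\textbf{Main obstacle: endpoint control.} Lemma~\ref{lem: DFS bipartite holes} gives no control over where $P_i$ ends, so an endpoint might have no neighbour into the reserve. I would handle this as follows. By joinedness between $X_i^\pm$ and $W_i$, all but $\varepsilon' t$ vertices of $X_i^\pm$ have at least one $G_p$-neighbour in the relevant reserve. Moreover, a set of $\varepsilon' t$ vertices of the reserve is adjacent to all but $\varepsilon' t$ vertices of $X_i^\pm$. It therefore suffices to trim each $P_i$ by at most $O(\varepsilon' t_0)$ vertices at each end until its endpoints lie among these good vertices. This is possible since the bad set has size less than $\varepsilon' t$, so among any $\varepsilon' t+1$ consecutive path vertices on the correct side some vertex is good. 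The connector lemma then needs a set of size at least $m'$ at the start of the chain; I would get it by enlarging the chain by one step, starting from the full good neighbourhood in $W_i$ of the set of admissible endpoints rather than from a single vertex.
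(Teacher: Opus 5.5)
Your skeleton is the paper's: a regularity partition of $G$, Erd\H{o}s--Gallai in the reduced graph, joinedness of the regular pairs in $G_p$ via Lemma~\ref{lem: random joined} and a union bound, long paths from Lemma~\ref{lem: DFS bipartite holes}, and then splicing. However, the splicing step fails as you wrote it.

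The chain $X_i^-,W_i,X_i^+$ lies inside the single cluster $X_i$. The regularity lemma gives no information about edges inside a cluster, and a cluster can be an independent set of $G$ (for instance, $G$ complete multipartite with the partition refining its parts). So:
\begin{itemize}
\item the pairs $(W_i,X_i^{\pm})$ never come from an edge of $R$;
\item there is no ``joinedness between $X_i^\pm$ and $W_i$'';
\item the ``good neighbourhood in $W_i$'' in your last paragraph may be empty.
\end{itemize}
That paragraph also shows that trimming to good endpoints is useless on its own. A single endpoint with one reserve neighbour cannot be fed into Lemma~\ref{lem: regularity connector}, whose end-sets must have size at least $3m$. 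Only your final idea, starting from the whole set of admissible endpoints, can work. Your parenthetical chain through $W_{i+1}$ is valid, since both of its pairs are slices of $(X_i,X_{i+1})$.

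The repair in fact makes reserves unnecessary. Obtain $\varepsilon t_0$-joinedness in $G_p$ for each full cycle pair $(X_i,X_{i+1})$. Joinedness is inherited by every subpair, which also spares you the slicing bookkeeping: slicing to relative size $\sqrt\varepsilon$ gives only $\sqrt\varepsilon$-regularity, so as written the density $\sqrt\varepsilon/2$ is too small for Lemma~\ref{lem: random joined} unless you decouple parameters.

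Since $P_i$ alternates between its two sides, its first and last $6\varepsilon t_0$ vertices each meet both sides in $3\varepsilon t_0$ vertices. Hence there is a $G_p$-edge from the tail of $P_{i-1}$ in $X_{i-1}^+$ to the head of $P_i$ in $X_i^+$. Follow $P_{i-1}$ to that edge and continue along $P_i$; each splice loses only $O(\varepsilon t_0)$ vertices.

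This is precisely the paper's argument, except that the paper does not halve clusters. It builds the long paths on full clusters along every other edge of the cycle in $R$ and uses the remaining cycle edges for the single splicing edges. When the cycle is odd, it closes it with one length-$2$ connector through $V_1$.
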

\begin{proof}
Fix parameters such that $\tau, \ell^{-1}\gg \varepsilon\gg L_0^{-1}\gg K^{-1}\geq d^{-1}$.
Let $V(G)=V_0\sqcup\ldots\sqcup V_{m}$ as in Lemma~\ref{lem: regularity} applied with $H$ as an empty graph, $U=V(G)$ and $p=1$. For simplicity of presentation, let us assume that each $V_i$ has size exactly $t=|V(G)|/(m+1)$. By Lemma~\ref{lem: random joined} and the union bound, we have that, with probability at least $1-m^2e^{-t}=1-e^{-\Omega(d)}$, for every $1\leq i<j\leq m$ with $(V_i,V_j)$ $\varepsilon$-regular of density at least $2\varepsilon$ in $G$, $(V_i,V_j)$ is $\varepsilon t$-joined in $G_p$. We show that whenever this is the case then $G_p$ contains a cycle of length at least $(1-\tau)d$.

So, let us assume that $(V_i,V_j)$ is $\varepsilon t$-joined in $G_p$ for every $1\leq i<j\leq m$ with $(V_i,V_j)$ $\varepsilon$-regular of density at least $2\varepsilon$.
Let $\mathcal R$ be the auxiliary graph on $\{V_1,\ldots,V_m\}$ where we join $V_i$ and $V_j$ whenever $(V_i,V_j)$ is $\varepsilon$-regular with density at least $2\varepsilon$ in $G$. Each edge in $\mathcal R$ corresponds to at most $t^2$ edges of $G$. For every $1\leq i<j\leq m$ with $(V_i,V_j)$ not $\varepsilon$-regular there are at most $t^2$ edges and if $(V_i,V_j)$ is of density at most $2\varepsilon$ then there are at most $2\varepsilon t^2$ edges. Furthermore, each $V_i$ contains at most $\binom{t}{2}$ edges and $V_0$ is incident to at most $t|V(G)|=(m+1)t^2$ edges in $G$. It follows that 
$$
e(\mathcal R)\geq \frac{e(G)-m\cdot\varepsilon m\cdot t^2-\binom{m}{2}\cdot 2\varepsilon t^2-m\cdot \binom{t}{2}-(m+1)t^2}{t^2}\geq d/t\cdot m/2-2\varepsilon m^2-3m. 
$$
Using that $d\geq |V(G)|/\ell$, we get that $d/t\geq m/\ell$. So $e(\mathcal R)\geq (1-4\varepsilon\ell-6\ell/m)d/t\cdot m/2\geq (1-\tau/2)d/t\cdot m/2$.

By the Erd\H{o}s--Gallai theorem, $\mathcal R$ contains a cycle $C$ of length at least $(1-\tau/2)d/t$. Let us relabel the sets $V_1,\ldots,V_m$ such that $C=V_1,\ldots, V_{|C|}$. By Lemma~\ref{lem: DFS bipartite holes}, for every even $1\leq i\leq |C|$ there is a path $P_i\subseteq G_p$ between $V_i$ and $V_{i+1}$ of length at least $(2-4\varepsilon)t$. Arbitrarily direct each $P_i$ and let $I_i$ be the initial segment of length $6\varepsilon t$, $O_{i+1}$ the last segment of length $6\varepsilon t$ and $P_i'$ the middle segment of length $(2-16\varepsilon)t$. Then $I_i$ intersects $V_i$ in $3\varepsilon t$ vertices and $O_{i+1}$ intersects $V_{i+1}$ in $3\varepsilon t$ vertices. If $|C|$ is even we can now connect up these paths to a cycle containing $P_i'$ for every even $1\leq i\leq |C|$ since there is an edge between $O_{i+1}$ and $I_{i+2}$ in $G_p$ for every even $1\leq i\leq |C|$. If $|C|$ is odd, note that by Lemma~\ref{lem: regularity connector} (with $Z=\varnothing$) there exists a path in $G_p$ of length $2$ from $O_{|C|}\cap V_{|C|}$ to $I_2\cap V_2$ through $V_1$ since both $(V_{|C|},V_1)$ and $(V_{1},V_2)$ are $\varepsilon t$-joined in $G_p$. Therefore, we obtain a cycle in $G_p$ of length at least 
\[
\sum_{\text{even }i=1}^{|C|} |P_i'|\geq 
(|C|-1)/2\cdot (2-16\varepsilon)t\geq ((1-\tau/2)d/t-1)\cdot (1-8\varepsilon)t\geq (1-\tau)d.\qedhere
\]
\end{proof}
\section{Large regular cores}
This section proves the following lemma designed to address the case when $G_2$ from the structure theorem is not empty.

\begin{lemma}\label{lem:cut-dense case} For every $C,M,\rho,q,\xi>0$ the following holds for large enough $K$ and every $d\geq K$. Let $H$ be a $\rho$-cut-dense graph of order at most $Md$ which contains a $qd$-regular subgraph of order at least $Cd$. For $p\geq K/d$, $H_p$ contains a cycle of length $(1-\xi)Cd$ with probability at least $1-e^{-\Omega(d)}$.
\end{lemma}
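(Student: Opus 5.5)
We follow the outline sketched in the introduction, but without the random vertex split: the regular subgraph supplies long paths, and cut-density supplies the connections between them. Let $R\subseteq H$ be the $qd$-regular subgraph with $|V(R)|\geq Cd$. Fix constants $\xi,q,\rho,M^{-1},C^{-1}\gg\varepsilon\gg L_0^{-1}\gg K^{-1}$. Apply Lemma~\ref{lem: regularity} to the pair of graphs $(R,H)$ on vertex set $V(H)$, with $U=V(R)$ and $p=C/M$. This gives an equipartition $V_0,\dots,V_m$ with $L_0\leq m\leq L$ and parts of size $t=\Theta(d)$. Each part is inside or outside $V(R)$, and almost all pairs are $\varepsilon$-regular in both $R$ and $H$. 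By Lemma~\ref{lem: random joined} and a union bound over the $O(L^2)$ pairs, with probability $1-e^{-\Omega(d)}$ every pair that is $\varepsilon$-regular of density at least $2\varepsilon$ in $R$ or in $H$ becomes $\varepsilon t$-joined in $H_p$. We work on this event, so the rest of the argument is deterministic.

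\textbf{Step 1: long paths from $R$.} Let $\mathcal R_R$ be the reduced graph on the clusters inside $V(R)$, with an edge for each $\varepsilon$-regular pair of $R$-density at least $2\varepsilon$. Since $R$ is $qd$-regular, the standard counting shows that the fractional weights $x_{ij}=e_R(V_i,V_j)/(qd\,t)$ give an almost perfect fractional matching of $\mathcal R_R$: each cluster has total weight at most $1$, and the total weight is at least $(1-o_\varepsilon(1))|V(R)|/(2t)$. The losses (irregular pairs, sparse pairs, edges inside clusters, edges at $V_0$) are $O(\varepsilon)$ after dividing by $qd$, because $q$ is fixed and $\varepsilon\ll q$. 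Any fractional matching has support splitting into single edges and odd cycles, so we get vertex-disjoint edges and odd cycles of $\mathcal R_R$ covering almost all of its weight.

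For an edge $V_iV_j$ of weight $x$, split $V_i,V_j$ into sub-clusters of size $xt$. Regularity on subsets of linear size survives as $\varepsilon'$-regularity, so the joinedness guarantee still applies and Lemma~\ref{lem: DFS bipartite holes} gives a path covering $(2-O(\varepsilon'))xt$ vertices. Odd cycles are handled similarly, since they carry a half-integral matching. Altogether this yields $O_L(1)$ vertex-disjoint paths $P_1,\dots,P_s$ in $R_p$ covering at least $(1-\xi/2)Cd$ vertices. For each $P_j$ we keep end segments of length $\Theta(\varepsilon' t)$ that lie in single clusters, exactly as in the proof of Lemma~\ref{lem: dense case}, and then trim the $P_j$ slightly so that every cluster retains at least $3\sqrt\varepsilon t$ unused vertices.

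\textbf{Step 2: linking with cut-density.} Let $\mathcal R_H$ be the reduced graph of $H$ on all clusters. Because $H$ is $\rho$-cut-dense, $\mathcal R_H$ with sparse and irregular pairs removed is still $\rho/2$-cut-dense, hence connected with diameter $O(1/\rho)$ (argue as in Lemma~\ref{lem: cut-dense properties}). For each consecutive pair $P_j,P_{j+1}$ (indices mod $s$), take a shortest path in $\mathcal R_H$ from the cluster of the end of $P_j$ to the cluster of the start of $P_{j+1}$. Then apply Lemma~\ref{lem: regularity connector} to get a linking path in $H_p$. It starts inside the end segment of $P_j$, ends inside the start segment of $P_{j+1}$, and avoids a set $Z$ consisting of all previously used vertices. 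There are only $O(s/\rho)=O_L(1)$ linking paths, each of bounded length, so $|Z|=O_L(1)\ll\varepsilon t$. To keep $Z$ small we apply the connector only to the unused leftovers of each cluster, which have size at least $3\varepsilon t$. The resulting cycle loses only the end segments, $O(s\varepsilon' t)\leq \xi Cd/2$ vertices, so it has length at least $(1-\xi)Cd$.

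\textbf{Main obstacle.} I expect Step 1 to be the hard part: turning the regularity of $R$ into paths that cover nearly all of $V(R)$, rather than only a constant fraction. This needs the fractional matching and odd-cycle decomposition of the reduced graph, a careful loss count that uses $qd$-regularity together with $\varepsilon\ll q$, and sub-cluster splitting for fractional weights while preserving joinedness. If that bookkeeping becomes messy, my fallback is to apply Lemma~\ref{lem: regularity} with a refined partition and reduce to an integral perfect matching in a blow-up of $\mathcal R_R$.
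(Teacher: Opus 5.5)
Your Step~2 is essentially the paper's linking argument. Cut-density of $H$ makes the reduced graph $\mathcal R_H$ connected, and Lemma~\ref{lem: regularity connector} routes short connectors through unused vertices. Two small differences: the paper reserves sets $V_i'\subseteq V_i$ of size $3\varepsilon t$ in advance rather than trimming afterwards, and it uses only connectivity, not bounded diameter.

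The genuine difference is Step~1. The paper places $\lfloor e_G(V_i,V_j)/(2\varepsilon t^2)\rfloor$ parallel edges on each regular pair of clusters of the regular subgraph. Because $G$ is $qd$-regular, this multigraph is nearly regular with multiplicity at most $1/(2\varepsilon)$. The Vizing--Gupta theorem plus averaging over colour classes then gives an honest matching covering a $(1-7M\varepsilon/q)$-fraction of the clusters, so every path lives in a full regular pair. Your LP route gets a near-perfect fractional matching essentially for free from the density weights. The price is handling the odd cycles of an extreme-point solution by halving clusters. That part is easy: $\varepsilon t$-joinedness of $(V_i,V_j)$ in $H_p$ passes to all sub-pairs, so you never need regularity of the halves.

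As written, however, Step~1 does not work. It is false that every fractional matching is supported on disjoint edges and odd cycles. Moreover, your specific weights cannot be used for the splitting you describe. Since $e_R(V_i,V_j)\le t^2$ and $t\le Md/m$, every weight satisfies $x_{ij}\le t/(qd)\le M/(qm)\ll\varepsilon$, because $m\ge L_0$. Hence every sub-cluster has size below $\varepsilon t$. At that scale neither regularity nor $\varepsilon t$-joinedness says anything, and Lemma~\ref{lem: DFS bipartite holes} gives no path. Also, the support can have $\Theta(m^2)$ edges, each costing $4\varepsilon t$.

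The repair is to use your weights only to certify that the fractional matching number of $\mathcal R_R$ is at least $(1-O(\varepsilon M/q))m_R/2$, where $m_R$ is the number of clusters inside $V(R)$. Then take an optimal vertex of the fractional matching polytope. By Balinski's theorem it is half-integral, with support a disjoint union of edges of weight $1$ and odd cycles of weight $1/2$. With weights in $\{1/2,1\}$ and at most two support edges per cluster, each cluster of $R$ loses only $O(\sqrt\varepsilon\, t)$ vertices, including your trimming. The total loss is then $O(\sqrt\varepsilon\,|V(R)|)$, the rest of your argument goes through, and your fallback is unnecessary.
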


In its proof we will use the following well-known extension of Vizing's theorem (proved independently by Gupta) to multigraphs; see e.g.\ \cite[Theorem 3.4.4]{MR5044348}.

\begin{theorem}[Vizing--Gupta theorem]
    Let $G$ be a multigraph of multiplicity at most $\mu$. Then, the edge-chromatic number of $G$ is at most $\Delta(G)+\mu$.
\end{theorem}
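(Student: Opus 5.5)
We would prove the Vizing--Gupta theorem by the classical Vizing fan / Kempe chain argument, adapted to multigraphs. Set $k=\Delta(G)+\mu$ and induct on the number of edges. Remove one edge $xy_0$, colour the rest properly with $k$ colours by induction, and try to colour $xy_0$, possibly after recolouring. For a vertex $v$ let $M(v)$ be the set of colours missing at $v$. Since $\deg(v)\le\Delta$, we have $|M(v)|\ge\mu$ for every $v$, and $|M(x)|,|M(y_0)|\ge \mu+1$ because $xy_0$ is uncoloured. If $M(x)\cap M(y_0)\neq\varnothing$ we colour $xy_0$ with a common colour and are done.

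\textbf{Fan.} Otherwise we build a maximal fan at $x$. This is a sequence of edges $e_0=xy_0,e_1=xy_1,\dots,e_s=xy_s$. Here the $y_i$ are neighbours of $x$, not necessarily distinct as vertices since parallel edges are allowed, but the $e_i$ are distinct edges. Each $e_i$ with $i\ge1$ must carry a colour lying in $M(y_j)$ for some $j<i$. We extend the fan as long as some colour of $\bigcup_i M(y_i)$ appears on a not-yet-used edge at $x$.

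If some colour $\alpha\in M(x)$ lies in $M(y_i)$, we \emph{rotate}. Follow the chain of dependencies back from $e_i$ to $e_0$, shifting each colour one step back along the chain, and then colour $e_i$ with $\alpha$; this is a proper colouring of all of $G$.

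Otherwise we count. Write $Y$ for the set of distinct vertices among the $y_i$. Maximality says every colour in $\bigcup_{y\in Y}M(y)$ is used on some fan edge. There are at most $\mu|Y|-1$ coloured edges between $x$ and $Y$, since $e_0$ is uncoloured. On the other hand,
\[
\sum_{y\in Y}|M(y)|\ge \mu|Y|+1 .
\]
Hence two distinct fan vertices $y_i,y_j$ share a missing colour $\beta$. We may take $i<j$ minimal in the appropriate sense, so that the fan truncated at $y_j$ is still a fan.

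\textbf{Kempe swap.} Pick $\alpha\in M(x)$; note $\beta\notin M(x)$. Consider the maximal $\alpha/\beta$-alternating path starting at $x$, which begins with the $\beta$-edge at $x$. This path has at most one other endpoint, so it cannot end at both $y_i$ and $y_j$. Choose the one, say $y^*\in\{y_i,y_j\}$, at which the path does not end, and swap $\alpha$ and $\beta$ on the path. Now $\alpha$ is missing at both $x$ and $y^*$. We then rotate the fan truncated at $y^*$ and colour its last edge with $\alpha$.

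The main obstacle is checking that the swap does not destroy the fan structure up to $y^*$. The danger is that the fan edge coloured $\beta$ gets recoloured, or that the swap changes $M(y_t)$ for an earlier fan vertex $y_t$. We would handle this in the standard way: use the minimal indices, and if the path passes through the $\beta$-coloured fan edge, truncate the fan just before that edge. The truncated fan still has $\alpha$ missing at its tip, so the rotation goes through. Checking this case analysis, with parallel edges allowed, is the only delicate part. The counting inequality is exactly where the bound $\Delta+\mu$ (rather than $\Delta+1$) is used.
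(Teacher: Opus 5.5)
The paper does not prove this statement. It quotes it from the literature (\cite[Theorem 3.4.4]{MR5044348}) and uses it only as a black box in the proof of Lemma~\ref{lem:cut-dense case}, so your argument has to stand on its own. Your setup, rotation step and counting step are correct; rotation is fine with repeated fan vertices, since the new colours placed at a repeated vertex are distinct and were all missing there. The gap is in the Kempe step, which is the real content of the fan lemma.

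\textbf{The colour bookkeeping is wrong.} Swapping $\alpha,\beta$ on the path $P$ that starts at $x$ with the $\beta$-edge turns that edge into an $\alpha$-edge. So afterwards $x$ misses $\beta$, not $\alpha$. Meanwhile $y^*\notin V(P)$ still misses $\beta$, so the common colour is $\beta$.

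\textbf{The repair ``truncate just before the $\beta$-edge'' fails in a case that genuinely occurs.} Take the prefix minimal: distinct vertices among $y_0,\dots,y_{j-1}$ have disjoint missing sets, $y_j$ does not occur earlier, and $\beta\in M(y_i)\cap M(y_j)$ with $i<j$. (This is what minimality must give you, not merely that a prefix of a fan is a fan.) If the $\beta$-edge at $x$ is a fan edge $e_s$ with $s\le j$, it depends on an earlier vertex missing $\beta$, which must be $y_i$.

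Now suppose $P$ ends at $y_i$, so your rule picks $y^*=y_j$. Since $y_j$ first occurs at position $j\ge s$, truncating before $e_s$ discards $y^*$. No vertex left in the truncated fan then shares a missing colour with $x$; for $y_i$, which has traded $\beta$ for $\alpha$, one gets $M'(x)\cap M'(y_i)\subseteq M(x)\cap M(y_i)=\varnothing$. This happens, for example, when $e_1=xy_1$ has colour $\beta\in M(y_0)$, $e_2=xy_2$ has colour $\sigma\in M(y_1)$, $\beta\in M(y_2)$, and $y_1y_0$ has colour $\alpha$, so that $P=xy_1y_0$.

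\textbf{How to close the gap.} In the case above no truncation is needed. After the swap, $y_i$ misses $\alpha$ and occurs before $e_s$, so $e_s$, now coloured $\alpha$, still satisfies the fan condition. The whole fan survives, and you rotate to $y_j$ using $\beta$.

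If instead $P$ does not end at $y_i$, take $y^*=y_i$. Truncating before $e_s$ then works because $y_i$ occurs before $e_s$, and you rotate with $\beta$.

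Alternatively, swap on the $\alpha/\beta$-chain starting at $y^*$. That chain avoids $x$, so $\alpha$ really does become common to $x$ and $y^*$, and no fan edge is recoloured. The only casualty is $y_i$ losing $\beta$, either as $y^*$ or as the far end of the chain. In that case truncate before $e_s$ and rotate to the earlier occurrence of $y_i$, which now misses $\alpha$.

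With one of these case analyses written out, your proof is complete.
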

We may now begin the proof.
\begin{proof}[Proof of Lemma~\ref{lem:cut-dense case}]
We fix constants satisfying the hierarchy: $1/C,1/M,\rho,q,\xi \gg \varepsilon \gg L_0^{-1} \gg K^{-1} \gg d^{-1}$. Let $G\subseteq H$ be a $qd$-regular subgraph of order at least $Cd$.

Let $V_0,\ldots,V_m$ be obtained from Lemma~\ref{lem: regularity} applied to $G$ and $H$ with $\varepsilon, p=C/M\leq |V(G)|/|V(H)|, L_0$ and $U=V(G)$. By relabeling if need be, let us assume that $V(G)\setminus V_0=V_1\sqcup\ldots\sqcup V_{m_G}$. For simplicity of presentation, we assume henceforth that each $V_i$ has size exactly $t=|V(H)|/(m+1)$. For every $1\leq i\leq m$, fix an arbitrary subset $V_i'\subseteq V_i$ of size $3\varepsilon t$.

The following is the only use we make of randomness. By Lemma~\ref{lem: random joined} and a union bound, with probability at least $1-2m^2e^{-t}=1-e^{-\Omega(d)}$ for each $1\leq i<j\leq m$, if $(V_i,V_j)$ is $\varepsilon$-regular with density at least $2\varepsilon$ in either $H$ or $G$, then it is $\varepsilon t$-joined in $H_p$. For the remaining argument, let us fix an outcome of the randomness where this holds.

For each $1\leq i<j\leq m_G$, set $\mu(V_i,V_j)=\left\lfloor\frac{e_G(V_i,V_j)}{2\varepsilon t^2}\right\rfloor$ whenever $(V_i,V_j)$ is $\varepsilon$-regular in $G$, otherwise set $\mu(V_i,V_j)=0$. Let $\mathcal R_G$ be the multigraph on $\{V_1,\ldots,V_{m_G}\}$ where we add exactly $\mu(V_i,V_j)$ edges between every pair $V_i,V_j$ with $1\leq i<j\leq m_G$. Next, we aim to show that $\mathcal R_G$ is almost regular so that we can apply the Vizing--Gupta theorem to conclude that $\mathcal R_G$ contains an almost spanning matching. For $1\leq i\leq m_G$, $V_i$ is incident to at most $t\cdot qd$ edges in $G$. Note also that $V_i$ is incident to at least $\deg_{\mathcal{R}_G}(V_i)\cdot 2\varepsilon t^2$ edges in $G$. We get that $\deg_{\mathcal{R}_G}(V_i)\leq qd/(2\varepsilon t)$. As this holds for all $1\leq i\leq m_G$, it is also an upper bound on $\Delta(\mathcal{R}_G)$. On the other hand, for every $1\leq i<j\leq m_G$, we have that $e_G(V_i,V_j)\leq(\mu(V_i,V_j)+1)\cdot 2\varepsilon t^2$ whenever $(V_i,V_j)$ is $\varepsilon$-regular in $G$ and otherwise $e_G(V_i,V_j)\leq t^2$. Furthermore, for every $1\leq i\leq m_G$, we have $e_G(V_i)\leq \binom{t}{2}$ and $e_G(V_0,V(G)\setminus V_0)\leq t\cdot qd$.

Therefore, we get
\begin{align*}
    qd|V(G)|/2=e(G)\leq e(\mathcal{R}_G)\cdot 2\varepsilon t^2+\binom{m_G}{2}\cdot 2\varepsilon t^2+m_G\cdot\varepsilon m\cdot t^2+m_G\cdot \binom{t}{2}+t\cdot qd.
\end{align*}
Recall that $|V(G)|\geq m_G\cdot t$ and that $(m+1)t=|V(H)| \leq Md$, and hence $d/t\geq m/M$. Dividing by $2\varepsilon t^2$ and rearranging now yields
\begin{align*}
    e(\mathcal R_G) \geq qd/(4\varepsilon t)\cdot m_G- \binom{m_G}{2}-m_Gm/2-m_G/(4\varepsilon)-qd/(2\varepsilon t)\geq (1-5M\varepsilon/q-2/m_G)qd/(4\varepsilon t)\cdot m_G. 
\end{align*}
Using that $1/m_G\ll M\varepsilon/q$, we get 
$$
e(\mathcal R_G) \geq (1-6M\varepsilon/q)qd/(4\varepsilon t)\cdot m_G. 
$$
Finally, denote by $\mu(\mathcal R_G)$ the maximum edge multiplicity of $\mathcal R_G$. Then $\mu(\mathcal R_G)\leq 1/(2\varepsilon)$. By the Vizing--Gupta theorem, we have that $\mathcal R_G$ has a proper edge-coloring with $\Delta(\mathcal R_G)+\mu(\mathcal R_G)\leq (qd+t)/(2\varepsilon t)$ colors. By averaging, one of those colors contains at least
$$
\frac{e(\mathcal R_G)}{(qd+t)/(2\varepsilon t)}\geq (1-6 M \varepsilon /q)m_G/2\cdot\frac{qd}{qd+t}\geq (1-7M\varepsilon/q)m_G/2
$$
edges. Let $F\subseteq \mathcal R_G$ be the matching corresponding to this color and note that $|V(F)|\geq (1-7M\varepsilon/q) m_G$.

For $V_iV_j\in E(F)$, we have that $(V_i\setminus V_i',V_j\setminus V_j')$ is $\varepsilon t$-joined in $H_p$, since $(V_i,V_j)$ is $\varepsilon t$-joined by assumption and, hence, every induced subgraph of $(V_i,V_j)$ is $\varepsilon t$-joined. By Lemma~\ref{lem: DFS bipartite holes}, there is a path of length $|V_i\setminus V_i'|+|V_j\setminus V_j'|-4\varepsilon t = 2t-10\varepsilon t$ in $H_p$.  Let $P_1,\ldots,P_{|F|}$ be such paths for each edge in $F$. We aim to connect these paths into a cycle. For every $P_i$, give it an arbitrary direction and let $I_i$ be the initial segment of $6\varepsilon t$ vertices, $O_i$ the segment of the last $6\varepsilon t$ vertices and $P_i'$ the middle segment which is of length at least $2t-22\varepsilon t$. For every $1\leq i\leq |F|$, let $W_i$ denote one of the two clusters which intersect $P_i$.

Let $\mathcal R_H$ be the graph on $\{V_1,\ldots, V_m\}$ where $V_i,V_j$ are connected if $(V_i,V_j)$ is $\varepsilon$-regular of density at least $2\varepsilon$ in $H$.
    \begin{claim}\label{claim: cluster graph is connected}
        $\mathcal R_H$ is connected.
    \end{claim}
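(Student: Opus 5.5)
We argue by contradiction: a disconnection of $\mathcal R_H$ would give a sparse cut of $H$, against $\rho$-cut-density. Suppose $\mathcal R_H$ is not connected. Then there is a partition $\{V_1,\ldots,V_m\}=\mathcal A\sqcup\mathcal B$ into two nonempty families with no edge of $\mathcal R_H$ between them. Set $A=\bigcup_{V_i\in\mathcal A}V_i$ and $B=\bigcup_{V_i\in\mathcal B}V_i$, and put $V_0$ on one side, say $A'=A\cup V_0$ and $B'=B$. This is a partition of $V(H)$. Both sides have size at least $t$, so by $\rho$-cut-density
\[
e_H(A',B')\geq \rho\,|A'||B'|\geq \rho\,|A'|\cdot |B'|.
\]
Since $|A'|+|B'|=|V(H)|=(m+1)t$, the product $|A'||B'|$ is at least $t\cdot mt$, and more usefully at least $\rho|A'||B'|$ with $\max(|A'|,|B'|)\geq |V(H)|/2$.

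Next we bound $e_H(A',B')$ from above by counting the edges that are not captured by $\mathcal R_H$. Every pair $(V_i,V_j)$ with $V_i\in\mathcal A$ and $V_j\in\mathcal B$ is either not $\varepsilon$-regular in $H$ or has density below $2\varepsilon$.
\begin{itemize}
\item Irregular pairs: each $V_j\in\mathcal B$ lies in at most $\varepsilon m$ of them, giving at most $|\mathcal B|\varepsilon m t^2=\varepsilon m t|B|$ edges.
\item Low-density pairs: these contribute at most $2\varepsilon t^2|\mathcal A||\mathcal B|=2\varepsilon|A||B|$ edges.
\item Edges from $V_0$ to $B$: at most $t|B|$.
\end{itemize}
Hence
\[
e_H(A',B')\leq 2\varepsilon|A||B|+\varepsilon mt|B|+t|B|\leq |B|\bigl(3\varepsilon|V(H)|+t\bigr).
\]

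It remains to compare the two bounds. The lower bound is $\rho|A'||B'|$. If $|A'|\geq|V(H)|/2$, the bounds give $\rho|V(H)|/2\leq 3\varepsilon|V(H)|+t$. Since $t\leq |V(H)|/L_0$ and $\rho\gg\varepsilon\gg L_0^{-1}$, this is a contradiction. If instead $|B|\geq|V(H)|/2$, we place $V_0$ in $B$ and rerun the count with the roles of $A$ and $B$ swapped; this produces the same contradiction, now with a factor of $|A|$ in place of $|B|$.

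The only care needed is the asymmetric handling of $V_0$ and of the irregular pairs. We always factor the upper bound by the smaller side, which is the side not containing $V_0$. This works because each cluster lies in few irregular pairs, so the irregular-pair count is bounded by (number of clusters on that side)$\cdot\varepsilon m t^2$. Everything else is routine, and the hierarchy $\rho\gg\varepsilon\gg L_0^{-1}$ absorbs all the error terms.
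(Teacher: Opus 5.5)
Your proof is correct and is essentially the paper's argument. A disconnection of $\mathcal R_H$ gives a cut of $H$ with $V_0$ on the larger side, and the crossing edges (those touching $V_0$, those in irregular pairs, and those in low-density pairs) number $O(\varepsilon)$ times the product of the side sizes, which contradicts $\rho$-cut-density; your case split on which side is larger is just the paper's ``without loss of generality $\ell\leq m/2$''.
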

    \begin{proof}
        Suppose otherwise, without loss of generality, let us say that $\mathcal R_H$ does not contain an edge between $\{V_1,\ldots, V_\ell\}$ ($\ell\leq m/2$) and $\{V_{\ell+1},\ldots,V_m\}$. Then, all edges in $H$ between $A=\bigcup_{i=1}^\ell V_i$ and $V(H)\setminus A$ are between pairs $(V_i,V_j)$ with either $1\leq i\leq\ell$ and $j=0$, or, $1\leq i\leq\ell<j\leq m$ and either $(V_i,V_j)$ is not $\varepsilon$-regular or $(V_i,V_j)$ has density less than $2\varepsilon$. The number of such edges is at most
        $$
        \ell t^2+\ell\varepsilon m\cdot t^2+\ell(m-\ell)\cdot2\varepsilon t^2\leq \ell(m+1-\ell)t^2\cdot 4\varepsilon< \rho\ell(m+1-\ell)t^2,
        $$
        using that $\varepsilon \ll \rho$.
        Noting that $\ell t=|A|$ and $(m+1-\ell)t=|V(H)\setminus A|$ this contradicts the assumption that $H$ is $\rho$-cut-dense.
    \end{proof}
    We iteratively find disjoint paths $Q_i$ in $H_p$ for every $1\leq i\leq |F|$ of length at most $m$ each such that $Q_i$ connects $O_i$ with $I_{i+1}$ (indices mod $|F|$) and the internal vertices are in $\bigcup_{j=1}^m V_{j}'$. Suppose we have already found $Q_1,\ldots, Q_{i-1}$. Set $Z=\bigcup_{j=1}^{i-1}V(Q_{j})$ and note that $|Z|\leq m^2\leq \varepsilon t$, using that $d^{-1}\ll \varepsilon$. Let $P=(V_{j_1},\ldots, V_{j_r})$ be a path in $\mathcal R_H$ with $V_{j_1}=W_i$ and $V_{j_r}=W_{i+1}$. Since $\mathcal R_H$ is a connected $m$-vertex graph, such a path exists and is of length at most $m$. Since $V_{j_h}V_{j_{h+1}}\in \mathcal R_H$, the pair $(V_{j_h},V_{j_{h+1}})$ is $\varepsilon t$-joined in $H_p$. Note that $|O_i\cap W_i|\geq 3\varepsilon t$ and also $|I_{i+1}\cap W_{i+1}|\geq 3\varepsilon t$. By Lemma~\ref{lem: regularity connector} applied with $U_1=O_i\cap W_i$, $U_2 = V_{j_2}',\ldots, U_{r-1}=V_{j_{r-1}}'$ and $U_r=I_{i+1}\cap W_{i+1}$, there exists a desired path $Q_i$ in $H_p$. 

    Finally, concatenating all the pieces, we conclude that there is a cycle $C$ in $H_p$ which includes $P_i'$ for all $1\leq i\leq |F|$.
    Thus, the total length of this cycle is at least 
    $$\sum_{i=1}^{|F|}|P_i'|\geq (1-7M\varepsilon/q)m_G/2\cdot (2-22\varepsilon)t\geq (1-7M\varepsilon/q)(1-12\varepsilon)|V(G)|\geq (1-\xi)Cd,$$
    using that $\varepsilon \ll \xi,q,M^{-1}$. This concludes the proof.
\end{proof}

\section{Nowhere dense}
If $G_3$ in the structure theorem retains any positive fraction of the total edge-set, the following lemma will allow us to find long cycles in (percolated subgraphs of) $G$ by showing the existence of long edges with respect to a given DFS-forest $T$.
\begin{lemma}\label{lem: nowhere dense case}
    For every $\eta>0$ the following holds for small enough $\delta>0$. Let $G$ be a graph with average degree at least $\eta d$ which does not contain a set of at most $(1+\eta/8)d$ vertices inducing at least $ \delta d^2$ edges. Let $T$ be a DFS-forest of $G$.
    Then $G$ contains an edge of $T$-length at least $d$.
\end{lemma}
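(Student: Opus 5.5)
Suppose for contradiction that every edge of $G$ has $T$-length less than $d$. The plan is to find a set of about $d$ vertices spanning $\delta d^2$ edges, contradicting the nowhere-dense hypothesis. Since every edge joins comparable vertices at $T$-distance $<d$, each vertex $v$ has all its $G$-neighbours among its at most $d-1$ nearest ancestors, or in the subtree $T_v$ within depth $d-1$ below $v$. Orient each edge of $G$ from the deeper endpoint to the shallower one. Each vertex $w$ then sends its out-edges into the set $A(w)$ of its $\leq d-1$ ancestors at distance $<d$. Note that $A(w)\cup\{w\}$ is a chain: the segment of the root path of $w$ of length less than $d$.

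First pass to a subgraph of $G$ with minimum degree at least $\eta d/2$. Being a subgraph, it remains nowhere dense, $T$ is still a DFS-forest for it, and $T$-lengths are unchanged. Now take a vertex $w$ whose out-degree is at least $\eta d/4$. Such a vertex exists because the out-degrees sum to $e(G)\geq \eta d|V(G)|/4$. Pick one with $\deg^+(w)\geq\eta d/4$ and, among these, take $w$ deepest (maximal in $<_T$ on its branch, i.e.\ a vertex with no such vertex below it). The goal is to show that the segment $S$ consisting of $w$ and its $d-1$ nearest ancestors, together with a few extra vertices, spans $\delta d^2$ edges.

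The natural candidate is the following. Consider the chain $P$ of all vertices on the root path of $w$ within distance $d$ of $w$, together with the vertices just below, in $T$, of suitable depth. Every vertex $u$ in the chain $A(w)$ that receives an edge from $w$ has itself $\geq\eta d/2$ neighbours, all in $A(u)\cup$(descendants of $u$ within distance $d$). This is where the argument becomes delicate, since descendants can branch. The cleaner route, I think, is an averaging over \emph{chains}. For each vertex $w$, let $S_w=A(w)\cup\{w\}$, a chain of $\leq d$ vertices, and count edges inside $S_w$: $e(G[S_w])\geq\sum_{u\in S_w}\deg^+(u)\cdot\mathbf 1$, because for $u\in S_w$ we have $A(u)\subseteq A(w)\cup(\text{ancestors beyond})$, and the out-edges of $u$ landing within distance $d$ of $w$ stay inside $S_w\cup S'$. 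I will therefore use the enlarged chain $S^*_w$: the $(1+\eta/8)d$ nearest ancestors of $w$, which has exactly the allowed size. Then for every $u$ among the first $\eta d/8$ ancestors of $w$, all out-edges of $u$ land in $S^*_w$. So $e(G[S^*_w])\geq \sum_{u\in A_{\eta d/8}(w)}\deg^+(u)$.

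It remains to find $w$ whose $\eta d/8$ nearest ancestors have total out-degree $\geq\delta d^2$. I expect this to be the main obstacle, since out-degree could sit on branching vertices while chains stay sparse. The key step is a weighted double count. Assign each vertex $u$ its out-degree. Choose a maximal set of leaves of $T$ (or deepest vertices) and sum over root-to-leaf chains: every vertex lies on some chain, but chains overlap. To handle overlaps, I would instead use the in-degree: a vertex $v$ with in-degree $\geq\eta d/4$ receives edges from descendants within distance $d$. Minimum degree $\eta d/2$ forces each vertex to have either out-degree or in-degree at least $\eta d/4$.

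If some deepest-type vertex has out-degree $\ge \eta d/4$, then its ancestors in $A(w)$ receiving those edges lie on the chain. Picking $w$ with large out-degree and then looking at the $\eta d/4$ out-neighbours $u$ on the chain above $w$, each $u$ has degree $\geq\eta d/2$. I would then iterate: take the topmost out-neighbour $u_1$ of $w$ at distance $<d$, then use a depth argument (DFS-tree ancestors at distance $\geq\eta d/8$ from the bottom) to show that many of these $u$'s have $\geq\eta d/8$ out-neighbours into a common window of length $(1+\eta/8)d$. The aim is to reach $\geq (\eta d/8)(\eta d/8)\geq\delta d^2$ edges for $\delta\leq\eta^2/100$, contradicting nowhere-density.
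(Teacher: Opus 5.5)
There is a genuine gap, and it sits exactly at the step you flag as the main obstacle. That step is not merely unproved but false. More generally, no argument that looks for the dense set inside a single chain such as $S^*_w$ can work.

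Consider the situation after your reduction. Let $a_1<_T\dots<_T a_{d-1}$ be a chain in $T$ with no $G$-edges among the $a_i$; this is allowed, since a DFS-forest need not be a subgraph of $G$, and it is exactly what happens when the lemma is applied to $G_3\subseteq G\setminus T$. Let $\ell_1,\dots,\ell_L$ with $L\ge d$ be children of $a_{d-1}$. Join each $\ell_j$ to $\eta d/2$ of the $a_i$, spread evenly so that every $a_i$ also has degree at least $\eta d/2$ (for $\eta<1$, say).

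All edges then have $T$-length at most $d-1$ and the minimum degree is at least $\eta d/2$. However, any chain contains at most one $\ell_j$, so it spans at most $\eta d/2$ edges. In your orientation every $a_i$ has out-degree $0$. Hence no $w$ has $\eta d/8$ nearest ancestors of large total out-degree, and the vertices $u$ in your last paragraph have no out-neighbours at all.

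The dense set here is $\{a_1,\dots,a_{d-1}\}\cup\{\ell_1,\dots,\ell_{\eta d/8}\}$. It is a window of ancestors together with many descendants of one vertex lying on \emph{different} branches. Your remark that every vertex has in- or out-degree at least $\eta d/4$ points the right way, but the proposal contains no mechanism that produces such a configuration in general.

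The missing idea is a way to locate a vertex $v$ whose subtree contains more than $\eta d/8$ vertices of $G$, all of whose remaining edges go up into the root path $P_v$ with average multiplicity at least $\eta d/8$. The paper gets this by orienting edges \emph{away} from the root and peeling. Call $v$ terminal if it is the only vertex of $T_v$ still having out-arcs, and repeatedly delete all out-arcs of a terminal vertex of out-degree at most $\eta d/8$.

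One maintains by induction the following invariant: for every $w$ still having an out-arc, the vertices of $V_{T,w}\cap V(G)$ without out-arcs have total in-degree at least $\frac{\eta d}{8}$ times their number. This survives each step because a peeled vertex keeps all its original in-arcs, of which it has at least $3\eta d/8$, while deleting its out-arcs costs at most $\eta d/8$ in-degree.

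At most $\eta dn/8<e(G)$ arcs are ever deleted, so the process stops at a terminal $v$ with more than $\eta d/8$ out-arcs. The descendants of $v$ then receive all their arcs from $P_v$, so some $\eta d/8$ of them receive at least $\eta^2d^2/64$ arcs from $P_v$. Together with the $d$ vertices of $P_v$ nearest to $v$, this is a set of at most $(1+\eta/8)d$ vertices. By nowhere-density it spans fewer than $\delta d^2$ edges, which forces an edge from these descendants to a vertex of $P_v$ at $T$-distance at least $d$.
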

\begin{proof}
    Without loss of generality, let us assume that $G$ is connected and has minimum degree at least $\eta d/2$, where we recall that the definition of a DFS-forest implies that $T$ is a DFS-forest of any subgraph of $G$. Let $D_{\mathrm{start}}$ be an orientation of $G$ where every edge is directed away from the root of $T$. Given a digraph $D$ on $V(G)$, we say that $v\in V(G)$ is $(D,T)$-terminal if $v$ is the only vertex in $V(T_v)$ which has an out-edge in $D$. Note that, as long as $D$ has at least one arc, there exists a $(D,T)$-terminal vertex. Indeed, let $(v,w)\in D$ be chosen such that $v$ is as far away in $T$-distance from the root of $T$ as possible. Therefore, no vertex in $V_{T,v}$ has an out-edge in $D$. Thus, $v$ is $(D,T)$-terminal. 

    Consider the following procedure initialized with $D=D_{\mathrm{start}}$. As long as there exists a $(D,T)$-terminal vertex $v$ with $d^+_D(v)\leq \eta d/8$, we update $D$ by removing all out-edges from $v$, where $d^+_D(v)$ denotes the out-degree of $v$ in $D$.
    
    Note that every digraph $D$ encountered by the algorithm is a subgraph of $D_{\mathrm{start}}$ and that we remove at most $ \eta d/8$ arcs in a single step. Let $D_{\mathrm{end}}$ be the digraph at the end of this algorithm and note that it terminates as each vertex is considered at most once.

    Observe that we remove at most $\eta d/8\cdot |V(G)|$ arcs in the procedure, so that $D_{\mathrm{end}}$ contains an arc and therefore, also a $(D_{\mathrm{end}},T)$-terminal vertex $v$. By the definition of the procedure, we have $d^+_{D_{\mathrm{end}}}(v)>\eta d/8$. Since all the out-neighbors of $v$ in $D_{\mathrm{end}}\subseteq D_{\mathrm{start}}$ lie in $V_{T,v\cap V(G)}$, it follows that
    $|V_{T,v}\cap V(G)|>\eta d/8$. Let $P_v\subseteq T$ be the path in $T$ from the root to $v$. As every vertex in $V_{T,v}$ has no out-edges, by the definition of a DFS-forest, we get that all the arcs of $D_{\mathrm{end}}$ with one endpoint in $V_{T,v}$ have the other endpoint in $V(P_v)$. 
    \par We now claim that
    \begin{equation}\label{eq:terminal claim}
    \sum_{u\in V_{T,v}\cap V(G)}d_{D_{\mathrm{end}}}^-(u)\geq \frac{\eta d}8\cdot |V_{T,v}\cap V(G)|,    
    \end{equation}
    where $d^{-}_{D_{\mathrm{end}}}(u)$ denotes the in-degree of $u$ in $D_{\mathrm{end}}$.
    Before proving \eqref{eq:terminal claim}, 
    let us first show how it can be used to conclude the proof. First, recall that $D_{\mathrm{end}}\subseteq D_{\mathrm{start}}$. Therefore, every arc in $D_{\mathrm{end}}$ corresponds to an edge in $G$. Assuming \eqref{eq:terminal claim}, by averaging, we can find some subset $V\subseteq V_{T,v}\cap V(G)$ of size $\eta d/8$ incident to at least $\eta^2 d^2/64$ arcs in $D_{\mathrm{end}}$. Define $W\subseteq V(P_v)$ to be the $d$ vertices on $P_v$ closest to $v$ (define $W$ to be $V(P_v)$ if there are fewer than $d$ vertices on $P_v$). Note that any edge between $V_{T,v}$ and $V(P_v)\setminus W$ is of $T$-length at least $d$. But $|V\cup W|\leq (1+\eta/8)d$, so $G[V\cup W]$ can contain at most $\delta d^2<\eta^2 d^2/64$ edges, implying the existence of an edge of $T$-length at least $d$. 
    \par We now verify that \eqref{eq:terminal claim} holds. For every digraph $D$ encountered by the procedure and $w\in V(D)$, denote by $W_{w,D}\subseteq V_{T,w}\cap V(G)$ the set of vertices with no out-edges in $D$. We show by induction that in every step of the procedure it is true that for every vertex $w\in V(D)$ with at least one out-edge in $D$ it holds that $\sum_{u\in W_{w,D}}d_{D}^-(u)\geq \eta d|W_{w,D}|/8$. Observe that when applied to the final digraph $D_{\mathrm{end}}$, for a $(D_{\mathrm{end}},T)$-terminal vertex $v$ this is equivalent to \eqref{eq:terminal claim}, as $W_{v,D_{\mathrm{end}}}=V_{T,v}\cap V(G)$.
    
    Before the first step, the inequality holds trivially since the minimum degree of $G$ is at least $\eta d/2$. Let $D_0$ be the digraph at some point in the procedure and let $D_1$ denote the digraph obtained by the procedure from $D_0$ by removing the out-edges from $v$. Suppose that for every vertex $w\in V(D_0)$ with at least one out-edge in $D_0$ it holds that $\sum_{u\in W_{w,D_0}}d_{D_0}^-(u)\geq \eta d|W_{w,D_0}|/8$. We show that then $D_1$ also has this property. Let $w$ be an arbitrary vertex which has an out-edge in $D_1$.
    As $v$ is $(D_0,T)$-terminal and $w$ has an out-edge in $D_1\subseteq D_0$, we get that $w\notin V_{T,v}$. Therefore, if $v\notin V_{T,w}$ then $$
    \sum_{u\in W_{w,D_1}}d_{D_1}^-(u)=\sum_{u\in W_{w,D_0}}d_{D_0}^-(u)\geq \eta d|W_{w,D_0}|/8,
    $$
    so that we are done. So, suppose that $v\in V_{T,w}$. By definition, we have $W_{w,D_1}\setminus W_{w,D_0}=\{v\}$. Also, note that the arcs incident to $v$ in $D_0$ did not change from those in the beginning of the procedure so that $d^-_{D_0}(v)+d^+_{D_0}(v)\geq \eta d/2$ and since, by the definition of the procedure, $d^+_{D_0}(v)\leq \eta d/8$, $d^-_{D_0}(v)\geq \eta d/4$ with room to spare.
    We get that
    $$
        \sum_{u\in W_{w,D_1}}d_{D_1}^-(u)= \sum_{u\in W_{w,D_0}}d_{D_0}^-(u) + d_{D_0}^-(v)-d_{D_0}^+(v)\geq\eta d|W_{w,D_0}|/8+\eta d/8 = \eta d|W_{w,D_1}|/8,
    $$
thereby concluding the proof.
\end{proof}

\section{Well-connected pieces}\label{sec: well connected pieces}
In this section we prove the following result, which is designed to address the well-connected pieces case of the structure theorem. This is arguably the most delicate part of our argument.
\begin{lemma}\label{lem:main cut-dense union}
    For every $\lambda>0$ there exists $\zeta>0$ such that for every $\beta>0$ there exists $\gamma>0$ such that for sufficiently large $d$ the following holds.
    Let $G$ be a non-empty graph which is the edge-disjoint union of a collection $\mathcal H$ of graphs satisfying the following properties.
    \begin{enumerate}[ref=(\arabic*)]
        \item Each $H\in\mathcal H$ contains a subgraph $Q$ of size at least $\beta d$ which is $\beta$-cut-dense and every vertex in $V(H)\setminus V(Q)$ has at least $\beta d$ neighbors in $V(Q)$.\label{hyp:beta}
        \item Any two $H,H'\in\mathcal H$ intersect in at most $\zeta d$ vertices.\label{hyp:zeta}
        \item Every $H\in \mathcal H$ has at least $\lambda d$ vertices which intersect some other $H'\in\mathcal H$.\label{hyp:lambda}
        \item For every $H\in\mathcal H$ there are at most $1/\lambda$ other $H'\in\mathcal H$ which intersect $H$ in more than $\gamma d$ vertices.\label{hyp:gamma}
    \end{enumerate}
    Furthermore, let $T$ be a DFS-forest of $G$. Then $G$ contains an edge of $T$-length at least $d$.
\end{lemma}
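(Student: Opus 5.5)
We argue by contradiction: assume every edge of $G$ has $T$-length less than $d$. The basic tool is the separator structure of a DFS-forest. For a vertex $w$, let $P_w$ be the path in $T$ from the root to $w$, and let $W_w\subseteq V(P_w)$ be the (at most) $d$ vertices of $P_w$ closest to $w$, including $w$. Any edge leaving $V_{T,w}$ goes to an ancestor of $w$. Since it has $T$-length $<d$, it lands in $W_w$. So $W_w$ separates $V_{T,w}\setminus W_w$ from $V(G)\setminus (V_{T,w}\cup W_w)$.

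By Lemma~\ref{lem: cut-dense properties}, hypothesis \ref{hyp:beta} makes each $H\in\mathcal H$ $k$-connected with $k=\beta^2 d/2$. So if $H$ has at least $k$ vertices on both sides of this separation, it must satisfy $|V(H)\cap W_w|\geq k$. The aim is therefore to find one vertex $w$ and more than $4/\beta^2$ pieces $H_1,\dots,H_s\in\mathcal H$, each crossing $W_w$ in this way. By \ref{hyp:zeta}, two pieces share at most $\zeta d$ vertices. Inclusion--exclusion then gives
\[
|W_w|\geq sk-\binom{s}{2}\zeta d>d,
\]
provided $\zeta\ll\beta^4$. This is a contradiction. (The quantifier order lets $\zeta$ depend only on $\lambda$, so I may need $s$ and the bookkeeping to depend on $\lambda$ rather than $\beta$; see the last paragraph.)

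To produce $w$ and the crossing pieces I will use the intersection graph. For each $H$, let $r_H$ be the $<_T$-minimal vertex of $T\llbracket V(H)\rrbracket$. Since $H$ is connected and every edge joins comparable vertices, $r_H\in V(H)$ and every vertex of $H$ is a descendant of $r_H$. Remove from each $H$ the at most $\beta d$ vertices that are "near the top", namely $V(H)\cap W'_{H}$ for a suitable window $W'_H$ of $T$-distance less than $d$ below $r_H$. Call the resulting family $\mathcal H'$. By Lemma~\ref{lem: average degree in cluster graph after removal}, the intersection graph $F$ of $\mathcal H'$ has average degree at least $\lambda/(8\gamma)$, which is huge because $\gamma$ is chosen after $\beta$. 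Lemma~\ref{lem: get path from average degree} then gives a piece $H^*$ and a path $H_1,\dots,H_s$ in $F$ such that $H^*$ meets each $H_i$, with $s\geq\lambda/(16\gamma)$. By \ref{hyp:gamma}, all but $1/\lambda$ of these meet $H^*$ in at most $\gamma d$ vertices.

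Next I take $w$ to be the $<_T$-maximal root among the roots $r_{H^*}, r_{H_1},\dots$, or more carefully a deepest root among a comparable chain of them. All these pieces meet a connected union, so Observation~\ref{key observation} should force their roots to be comparable. Each $H_i$ that meets both $V_{T,w}$ and vertices of $H^*$ outside $V_{T,w}\cup W_w$ then has to cross $W_w$. I then need it to have at least $k$ vertices on each side. I would get the side above $W_w$ from the removal of top vertices built into $\mathcal H'$, and the side below from the lower bound $|V(H)|\geq\beta d$ together with the small overlaps.

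The main obstacle is exactly this step: showing that many pieces cross a single separator $W_w$ with at least $k$ vertices on each side. The difficulty is that DFS separators are nested, while the pieces might instead sit in disjoint subtrees, each chained only through small overlaps. My first attempt is to pick $w$ greedily as the deepest vertex whose subtree $V_{T,w}$ contains at least $k$ vertices of at least $4/\beta^2$ pieces that also have $k$ vertices outside it. Then I would show, using the long path $H_1,\dots,H_s$ in $F$ and the small pairwise intersections, that such a $w$ exists and that its children's subtrees cannot each absorb the pieces. If this fails, I would fall back on a weighted terminal-vertex argument in the spirit of the proof of Lemma~\ref{lem: nowhere dense case}, where the pieces play the role of edges.
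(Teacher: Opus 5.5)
You set up a workable endgame but do not prove the step that carries the lemma, so there is a genuine gap.

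\textbf{What is sound.} If every edge is $T$-short, then $W_w$ separates. Any $\beta^2d/2$-connected piece with a vertex strictly on each side (one vertex suffices) must place $\beta^2d/2$ vertices in $W_w$. More than $4/\beta^2$ such pieces would then overfill $|W_w|\le d$. Your requirement $\zeta\ll\beta^4$ clashes with the quantifiers, since $\zeta$ is fixed before $\beta$, but this is repairable. Using \ref{hyp:gamma} and Lemma~\ref{lem: independent set}, keep a $1/(1/\lambda+1)$-fraction of the crossing pieces with pairwise intersections at most $\gamma d$; $\gamma$ is chosen after $\beta$. This is exactly what the paper does in Lemma~\ref{lem: long path in T}.

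\textbf{The gap.} You never produce $w$ together with many crossing pieces, and that is the entire content of the lemma. The route you sketch fails at concrete points:
\begin{itemize}
\item Roots of pieces along a path in $F$ need not be pairwise comparable. Observation~\ref{key observation} gives comparability of $r_H$ and $r_{H'}$ only when $H$ and $H'$ intersect, and comparability is not transitive. So $H_1$ and $H_3$ can hang in disjoint subtrees below $r_{H_2}$; this is precisely the obstruction you name and leave open.
\item Lemma~\ref{lem: get path from average degree} gives a path containing many neighbours of $H^*$, not a path all of whose members meet $H^*$.
\item The removal step is uncontrolled. The vertices of $H$ at $T$-distance less than $d$ below $r_H$ can number far more than $\beta d$. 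For example, take $Q$ a clique on $\beta d$ vertices, which must form a chain in $T$, plus arbitrarily many vertices joined to all of $Q$, hung as leaves below that chain. Then Lemma~\ref{lem: average degree in cluster graph after removal} need not apply to your $\mathcal H'$.
\end{itemize}

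\textbf{The missing idea in the paper.} Work not with roots but with the chain $V_H$ of the $\beta'd$ topmost vertices of $H$, where $\beta'=\beta^2/2$ (Lemma~\ref{lem: cluster in dfs tree}). Let $f_H$ and $w_H$ be its first and last vertices. Join $H,H'$ in $F$ only if they share a vertex outside $V_H\cup V_{H'}$. This robust intersection forces $w_H$ and $w_{H'}$ to be comparable (Lemma~\ref{lem: w's are comparable simple}). It also ensures that for any $F$-connected family the $<_T$-minimum of the $w_H$ is attained (Lemma~\ref{lem: w's are comparable}). Given $H^*$ and the path $P$, one of three things happens:
\begin{itemize}
\item Many neighbours of $H^*$ have $w_H\le_T w_{H^*}$, hence are pairwise comparable, and Lemma~\ref{lem: long path in T} applies.
\item For some $v$, $P[\mathcal H_v]$ has many components, and Lemma~\ref{lem: path cant split} applies.
\item Otherwise you can descend into one component of $P[\mathcal H_{w_{H^*}}]$ and repeat. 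This builds a chain $w_{H^*}<_T w_{H_1}<_T\cdots$ long enough for Lemma~\ref{lem: long path in T}.
\end{itemize}
The contradiction is then drawn differently from your separator count. One obtains $f_H$ and $w_{H'}$ at $T$-distance at least $2d/\beta'$ with $H=H'$ or $HH'\in E(F)$. The diameter bound $1/\beta'$ on each piece gives a path of length at most $2/\beta'$ between them, so some edge on it has $T$-length at least $d$.
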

For the rest of the section we operate under the assumptions of Lemma~\ref{lem:main cut-dense union}. We choose $\zeta>0$ to be small enough compared to $\lambda$. Without loss of generality, we may assume that $\beta>0$ is small enough compared to $\lambda,\zeta$ (as this only weakens the hypotheses on $\mathcal{H}$) and we choose $\gamma>0$ to be small enough compared to $\lambda,\zeta,\beta$. We set $\beta'=\beta^2/2$. We obtain the hierarchy $\lambda\gg\zeta\gg\beta'\gg\gamma\gg d^{-1}>0$.  

We remark here, that \ref{hyp:beta} is stronger than what we need here. In fact, we will only use that every $H\in\mathcal H$ is $\beta' d$-connected and has diameter at most $1/\beta'$, both guaranteed by Lemma~\ref{lem: cut-dense properties}.

Recall that we have a fixed DFS-forest $T$ of $G$, and that $G$ is the edge-disjoint union of the graphs in $\mathcal{H}$. We show that every member $H\in \mathcal{H}$ contains a linear-sized segment that is ``small'' with respect to the partial order defined by $T$. These segments will serve as the building blocks as we search for an edge of $G$ that is long with respect to $T$.

\begin{lemma}\label{lem: cluster in dfs tree}
    For every $H\in\mathcal H$ there is $V\subseteq V(H)$ with $|V|=\beta' d$, such that, for all $v\in V$ and $u\in V(H)$, $v$ and $u$ are comparable with respect to $<_T$. Additionally, if $v\in V$ and $u\in V(H)\setminus V$ then $v<_Tu$.
    
    \end{lemma}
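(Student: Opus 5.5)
We will build the set $V$ greedily, one vertex at a time, taking at each step the unique $<_T$-minimal vertex of what remains of $V(H)$. The only input from the structure of $H$ is that, by Lemma~\ref{lem: cut-dense properties}, $H$ is $\beta' d$-connected (with $\beta'=\beta^2/2$). In particular, $H$ has more than $\beta' d$ vertices, and $H-S$ is connected for every $S\subseteq V(H)$ with $|S|<\beta' d$. We also use that every edge of $H\subseteq G$ has $<_T$-comparable endpoints, since $T$ is a DFS-forest of $G$.

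\textbf{Key claim.} Let $W\subseteq V(H)$ be a nonempty set such that $H[W]$ is connected. Then the poset $(W,<_T)$ has a unique minimal element $x$, and $x<_T y$ for all $y\in W\setminus\{x\}$.

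To prove it, suppose $u,w\in W$ are two distinct minimal elements. They must be incomparable. Let $a$ be their last common ancestor in $T$, which exists because a path in $H[W]$ joins them and consecutive vertices on it are comparable, so all of $W$ lies in one tree of $T$. Let $c$ be the child of $a$ on the branch towards $u$. Walk along a path from $u$ to $w$ in $H[W]$, and consider the first edge $yz$ with $y\in V(T_c)$ and $z\notin V(T_c)$; such an edge exists since $w\notin V(T_c)$. Because $y$ and $z$ are comparable and $z$ is not a descendant of $c$, $z$ must be an ancestor of $y$ outside $T_c$, hence $z\leq_T a$. Then $z\in W$ and $z<_T u$, contradicting the minimality of $u$. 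So there is a unique minimal element $x$. Every $y\in W$ lies above some minimal element of $W$ (the poset is finite), and that element can only be $x$. Hence $x<_T y$ for every $y\neq x$.

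\textbf{Iteration.} Set $S_0=\varnothing$. For $i<\beta' d$, the graph $H-S_i$ is connected because $|S_i|<\beta' d$. Apply the claim to $W=V(H)\setminus S_i$ to obtain $x_{i+1}$, and set $S_{i+1}=S_i\cup\{x_{i+1}\}$. Take $V=S_{\beta' d}$; this has the right size because $|V(H)|>\beta' d$. Each $x_i$ is $<_T$-below every vertex of $V(H)\setminus S_i$, which contains $V(H)\setminus V$ and also $x_j$ for all $j>i$. Therefore any two elements of $V$ are comparable, each $v\in V$ is comparable to every $u\in V(H)$, and $v<_T u$ whenever $u\in V(H)\setminus V$.

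\textbf{Main obstacle.} The only nontrivial step is the key claim, namely that a connected vertex set has a unique $<_T$-minimum. This is the standard DFS property that any path between two incomparable vertices passes through a common ancestor of them, and it is exactly where the DFS-forest hypothesis enters. The connectivity from Lemma~\ref{lem: cut-dense properties} is what lets us repeat the argument $\beta' d$ times after deleting previously chosen vertices.
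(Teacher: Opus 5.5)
Your proof is correct and takes essentially the same approach as the paper. You show that a connected vertex set has a unique $<_T$-minimum, and then peel off $\beta' d$ such minima one at a time, using the $\beta' d$-connectivity from Lemma~\ref{lem: cut-dense properties} to keep the remainder connected. The paper proves the first step by showing that the root of $T\llbracket V(G')\rrbracket$ lies in $V(G')$, using the same ``edge leaving a child subtree'' argument you run with the last common ancestor of two minimal elements.
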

\begin{proof}
    First, we prove the weaker statement that for any connected subgraph $G'\subseteq G$ there is $v\in V(G')$ such that $v\leq_T u$ for all $u\in V(G')$ (or in other words $v$ is comparable to and smaller than all other vertices of $V(G')$). Let $v$ be the root of $T\llbracket{V(G')}\rrbracket$. Then, $v\leq_T u$ for every $u\in V(G')$, so it is left to show that $v\in V(G')$. Suppose not. Then, there is a child $w$ of $v$ in $T$ such that $S=V(T_w)\cap V(G')$ is neither empty nor $V(G')$. Let $st$ be an edge of $G'$ with $s\in S$ and $t\notin S$, which exists as $G'$ is connected. Since $T$ is a DFS-forest, $s$ and $t$ must be comparable, so $t<_T w$, as $t$ is not in $T_w$. But $t$ is also not $v$, as $v\notin V(G')$, so we get that $t\notin V(T_v)$, contradicting that $T\llbracket{V(G')}\rrbracket\subseteq T_v$.

    Using the statement we just proved, we can now finish the proof. Set $H_1=H$. Since $H_1$ is connected, there is $v_1\in V(H_1)$ comparable to and smaller than all other vertices of $H_1$. Set $H_2=H_1-v_1$. Since $H$ is $\beta' d$-connected, $H_2$ is connected. Similarly as before, we find $v_2$ and remove it to obtain $H_3=H_2-v_2$. Continuing this process $\beta'  d$ times, we obtain the desired set of vertices.
\end{proof}
Lemma~\ref{lem: cluster in dfs tree} allows us to introduce notation describing the interaction of the graphs in $\mathcal H$ with $T$. For every $H\in\mathcal H$, let $V_H\subseteq V(H)$ be the set given by Lemma~\ref{lem: cluster in dfs tree} of size $\beta' d$. Lemma~\ref{lem: cluster in dfs tree} guarantees that every pair of vertices in $V_H$ is comparable, so that we can define $f_H$ to be the first and $w_H$ the last vertex of $V_H$. Note that it is not necessarily the case that $V_H$ induces a subpath of $T$ (in fact, $T$ may even have vertices not included in $G$); rather, all we know is that all vertices of $V_H$ live on the same branch of $T$. 

To prove Lemma~\ref{lem:main cut-dense union} we show that there exists a pair of vertices $u,v\in V(G)$ such that $u$ and $v$ are at $T$-distance at least $2d/\beta'$ and there also is a path from $u$ to $v$ in $G$ of length at most $2/\beta'$. We can then conclude that there is an edge with $T$-length at least $d$ in $G$ by following this path from $u$ to $v$.
The following lemma and Lemma~\ref{lem: path cant split} capture the ideas we use to show that there are vertices with large enough $T$-distance. 
\begin{lemma}\label{lem: long path in T}
    Let $\mathcal H'\subseteq \mathcal H$ be such that $|\mathcal{H}'|\geq 16/(\lambda \beta'^2 )$ and such that $w_H$ and $w_{H'}$ are comparable for all $H,H'\in\mathcal H'$. Then, there are $H^-,H^+\in \mathcal H'$ such that $f_{H^-}<_T w_{H^+}$ and $f_{H^-}$ and $w_{H^+}$ have $T$-distance at least $4d/\beta' $.
\end{lemma}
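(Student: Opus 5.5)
The plan is to put all the sets $V_H$, $H\in\mathcal H'$, on a single branch of $T$. We then pass to a large subfamily whose sets $V_H$ are pairwise almost disjoint, and count vertices.

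\textbf{Step 1: a common branch.} Since the vertices $w_H$, $H\in\mathcal H'$, are pairwise comparable, they form a chain in $<_T$. Let $H^+\in\mathcal H'$ be the member whose $w_{H^+}$ is $<_T$-maximal. By Lemma~\ref{lem: cluster in dfs tree}, every vertex of $V_H$ is comparable to $w_H$ and satisfies $v\leq_T w_H$, since $w_H$ is the last vertex of $V_H$. Hence every $V_H$ with $H\in\mathcal H'$ lies on the path $P$ in $T$ from the root to $w_{H^+}$.

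Let $H^-\in\mathcal H'$ be the member whose $f_{H^-}$ is $<_T$-minimal on $P$. Then all of $\bigcup_{H\in\mathcal H'}V_H$ lies on the subpath of $P$ from $f_{H^-}$ to $w_{H^+}$. If the $T$-distance between these two vertices were less than $4d/\beta'$, this subpath would have at most $4d/\beta'$ vertices, so
\[
\Bigl|\bigcup_{H\in\mathcal H'}V_H\Bigr|\leq 4d/\beta'.
\]
This also covers the degenerate case $f_{H^-}=w_{H^+}$. So it suffices to show that this union has more than $4d/\beta'$ vertices, which also yields $f_{H^-}<_T w_{H^+}$.

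\textbf{Step 2: an almost-disjoint subfamily.} Pairwise intersections of size $\zeta d$ are too large for a union bound over many members, so I will use hypothesis~\ref{hyp:gamma}. Form the auxiliary graph on $\mathcal H'$ in which $H$ and $H'$ are adjacent when $|V(H)\cap V(H')|>\gamma d$. By~\ref{hyp:gamma} it has maximum degree at most $1/\lambda$. By Lemma~\ref{lem: independent set} it therefore has an independent set $\mathcal I$ with
\[
|\mathcal I|\geq \frac{|\mathcal H'|}{1/\lambda+1}\geq \frac{\lambda|\mathcal H'|}{2}\geq \frac{8}{\beta'^2}.
\]
Fix a subfamily $\mathcal I'\subseteq\mathcal I$ of size exactly $N=\lceil 9/\beta'^2\rceil$. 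If that needs slightly more than $8/\beta'^2$ members, either adjust the constant in the hypothesis or use $N=8/\beta'^2$ together with the slack below.

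\textbf{Step 3: counting.} By Bonferroni,
\[
\Bigl|\bigcup_{H\in\mathcal I'}V_H\Bigr|\geq N\beta' d-\binom N2\gamma d\geq N\beta'd\Bigl(1-\frac{N\gamma}{2\beta'}\Bigr).
\]
Since $\gamma\ll\beta'$ and $N=O(\beta'^{-2})$, the correction factor is at least $1-O(\gamma/\beta'^3)$, which is as close to $1$ as we like. So the union has size at least $(1-o(1))N\beta'd$, which exceeds $4d/\beta'+1$ for the chosen $N$ (with $N\beta'\geq 8/\beta'$ there is ample room). This contradicts Step~1, and the lemma follows.

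The only delicate point is Step~2. A naive count using only the bound $\zeta d$ from~\ref{hyp:zeta} fails, so the proof genuinely needs~\ref{hyp:gamma} together with the hierarchy $\gamma\ll\beta'$. Everything else is bookkeeping with the chain structure supplied by Lemma~\ref{lem: cluster in dfs tree} and Observation~\ref{key observation}.
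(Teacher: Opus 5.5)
Your proof is correct and takes essentially the paper's route: use hypothesis~\ref{hyp:gamma} with Lemma~\ref{lem: independent set} to extract about $8/\beta'^2$ members with pairwise intersections at most $\gamma d$, observe that every $V_H$ lies on the $T$-path from $f_{H^-}$ to $w_{H^+}$, and count the union. The paper instead shows each $V_H$ has at least $\beta' d/2$ private vertices, which is equivalent to your Bonferroni estimate; the hedge with $N=\lceil 9/\beta'^2\rceil$ is unnecessary, since $N=\lceil 8/\beta'^2\rceil$ is available and already gives a union of size about $8d/\beta'$, twice what is needed, comfortably absorbing the vertex-count versus $T$-distance off-by-one that the paper glosses over.
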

\begin{proof}
    Recall that, by hypothesis \ref{hyp:gamma} of Lemma~\ref{lem:main cut-dense union}, for every $H$ in $\mathcal H'$ there are at most $1/\lambda$ many $H'\in\mathcal H'$ intersecting $H$ in more than $\gamma d$ vertices. By Lemma~\ref{lem: independent set}, there exists $\mathcal H''\subseteq\mathcal H'$ of size at least $|\mathcal H'|/(1/\lambda +1)\geq 8/\beta'^2$ such that no graphs in $\mathcal H''$ intersect in more than $\gamma d$ vertices. By throwing away some graphs, let us assume that the size of $\mathcal H''$ is exactly $8/\beta'^2$. 
    
    By Observation~\ref{key observation}, $f_H$ and $w_{H'}$ as well as $f_H$ and $f_{H'}$ are comparable for all $H,H'\in\mathcal H''$. Let $H^-\in\mathcal H''$ be such that $f_{H^-}$ is minimal and $H^+\in\mathcal H''$ such that $w_{H^+}$ is maximal, noting in particular that we then have $f_{H^-}<_T w_{H^+}$ (with the possibility that $H^-=H^+$). Moreover, the path in $T$ from $f_{H^-}$ to $w_{H^+}$ contains all of $\bigcup_{H\in\mathcal H''}V_{H}$. 
    
    For any $H\in\mathcal H''$, we have
    $$
    \sum_{H'\in(\mathcal H''-H)}|V(H)\cap V(H')|\leq 8/\beta'^2\cdot\gamma d\leq \beta' d/2,
    $$
    using that $\gamma \ll \beta'$.
    We get $|V_{H}\setminus (\bigcup_{H'\in (\mathcal H''-H)}V_{H'})|\geq \beta'  d/2$ (recall $|V_H|\geq \beta' d$), from which we conclude $|\bigcup_{H\in\mathcal H''}V_{H}|\geq 8/\beta'^2\cdot \beta' d/2= 4d/\beta'$. Thus, $f_{H^-}$ and $w_{H^+}$ are at $T$-distance at least $4d/\beta' $.
\end{proof}
 
We say that $H,H'\in\mathcal H$ \textit{intersect robustly} if they share a vertex outside of $V_H\cup V_{H'}$. Let $F$ be an auxiliary graph whose vertices are the elements of $\mathcal H$ and we join $H,H'\in\mathcal H$ by an edge if and only if they intersect robustly. The key to the proof of Lemma~\ref{lem:main cut-dense union} is to use Lemma~\ref{lem: average degree in cluster graph after removal} and Lemma~\ref{lem: get path from average degree} to understand the graph $F$ and combine this with the knowledge we are currently building up about the interaction of the graphs in $\mathcal H$ with $T$. 

Next, we provide two lemmas which help us understand how $T$ interacts with $F$ and $\mathcal H$, more specifically, with the vertices $w_H$ for $H\in\mathcal H$.
\begin{lemma}\label{lem: w's are comparable simple}
    If $H,H'\in\mathcal H$ intersect robustly, then $w_H$ and $w_{H'}$ are comparable with respect to $T$.
\end{lemma}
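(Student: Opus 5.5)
Let $x$ be a vertex shared by $H$ and $H'$ with $x\notin V_H\cup V_{H'}$; such a vertex exists because $H$ and $H'$ intersect robustly. The plan is to show that $w_H\leq_T x$ and $w_{H'}\leq_T x$, and then finish with Observation~\ref{key observation}.

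The first step uses Lemma~\ref{lem: cluster in dfs tree} applied to $H$. Since $w_H\in V_H$ and $x\in V(H)\setminus V_H$, the ``additionally'' clause of that lemma gives $w_H<_T x$ directly. The same argument applied to $H'$, using $w_{H'}\in V_{H'}$ and $x\in V(H')\setminus V_{H'}$, gives $w_{H'}<_T x$.

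The second step: both $w_H$ and $w_{H'}$ are strictly below the common vertex $x$ in the poset $<_T$. By Observation~\ref{key observation}, $w_H$ and $w_{H'}$ are then comparable (or equal, which is also fine). This proves the lemma.

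The only point needing care is the exact form of Lemma~\ref{lem: cluster in dfs tree}, and it is already in the right form. In its proof, the set $V_H=\{v_1,\dots,v_{\beta' d}\}$ is built so that each $v_i$ lies below every vertex of $H-\{v_1,\dots,v_{i-1}\}$. Hence every vertex of $V_H$, and in particular the last one $w_H$, lies below every vertex of $V(H)\setminus V_H$. This is the reason robust intersection is defined to exclude vertices of $V_H\cup V_{H'}$: if the shared vertex lay in $V_H$, we could not conclude that $w_H$ is below it.
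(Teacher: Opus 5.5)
Your proof is correct and matches the paper's argument: take a common vertex $x\notin V_H\cup V_{H'}$, use Lemma~\ref{lem: cluster in dfs tree} to get $w_H<_T x$ and $w_{H'}<_T x$, and conclude with Observation~\ref{key observation}.
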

\begin{proof}
    By definition, there is some $v \in V(H)\cap V(H')\setminus (V_H \cup V_{H'})$. As $v \in V(H) \setminus V_H$, we know that $v$ is not one of the first $\beta' d$ vertices of $H$, hence $w_H<_T v$ . By the same logic, $w_{H'}<_T v$. By Observation~\ref{key observation}, we conclude that $w_H$ and $w_{H'}$ are comparable.
\end{proof}

The next lemma generalizes Lemma~\ref{lem: w's are comparable simple} to connected components in $F$. Given a family $\mathcal H'\subseteq\mathcal H$, we write $w(\mathcal H')\subseteq V(G)$ to denote the set $\{w_H:H \in \mathcal H'\}$.
\begin{lemma}\label{lem: w's are comparable}
    Let $\mathcal H'\subseteq \mathcal H$ be such that $F[\mathcal H']$ is connected. Then, the root of $T\llbracket {w(\mathcal H')}\rrbracket$ is in $w(\mathcal H')$. 
\end{lemma}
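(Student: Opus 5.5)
We argue by induction on $|\mathcal H'|$, adding one member at a time along a spanning tree of $F[\mathcal H']$. Since $F[\mathcal H']$ is connected, order its elements as $H_1,\dots,H_k$ so that each $H_j$ with $j\geq 2$ is adjacent in $F$ to some $H_i$ with $i<j$. For $k=1$ the claim is trivial. Write $r_j$ for the root of $T\llbracket w(\{H_1,\dots,H_j\})\rrbracket$. The root of the minimal subtree containing a set $W$ is the unique vertex of $T$ that is $\leq_T$ every element of $W$ and is maximal with this property, i.e.\ the common ancestor of $W$ that is deepest in $T$. Note that this presupposes $W$ lies in a single component of $T$; this holds inductively, since comparable vertices lie in the same tree.

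For the inductive step, suppose $r_{j-1}=w_{H_a}$ for some $a<j$. Then $w_{H_a}\leq_T w_{H_i}$ for all $i<j$. Let $H_i$ with $i<j$ be a neighbour of $H_j$ in $F$. By Lemma~\ref{lem: w's are comparable simple}, $w_{H_i}$ and $w_{H_j}$ are comparable, so we split into two cases.

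If $w_{H_j}\leq_T w_{H_i}$, then $w_{H_j}$ and $w_{H_a}$ are both $\leq_T w_{H_i}$. By Observation~\ref{key observation} (or trivially if one of them equals $w_{H_i}$), they are comparable. The smaller of the two is then a lower bound for all of $w(\{H_1,\dots,H_j\})$, and it lies in that set.

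If instead $w_{H_i}<_T w_{H_j}$, then $w_{H_a}\leq_T w_{H_i}<_T w_{H_j}$, so $w_{H_a}$ is still a lower bound for the enlarged set and remains an element of it.

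In either case we have found an element $x\in w(\{H_1,\dots,H_j\})$ with $x\leq_T y$ for all $y$ in the set. The remaining step is to check that such an $x$ is the root of $T\llbracket\cdot\rrbracket$. The path in $T$ from $x$ to each $y$ stays within $T_x$, so the union of these paths is a subtree containing the set. It is minimal, since it is a union of paths forced to be present, and its vertex closest to the root of $T$ is $x$. Hence $r_j=x\in w(\{H_1,\dots,H_j\})$, which completes the induction.

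There is no real obstacle here. The only care needed is handling the non-strict comparisons, such as coincidences like $w_{H_j}=w_{H_i}$, before applying Observation~\ref{key observation}, which is stated for strict inequalities.
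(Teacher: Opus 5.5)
Your proof is correct, but it is organized differently from the paper's.

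The paper argues by contradiction in one step. It lets $v$ be the root of $T\llbracket w(\mathcal H')\rrbracket$, assumes $v\notin w(\mathcal H')$, and picks a child $u$ of $v$ such that $T_u$ contains some but not all of $w(\mathcal H')$. It then uses connectivity of $F[\mathcal H']$ in its cut form: some edge $HH'\in E(F)$ has $w_H\in V(T_u)$ and $w_{H'}\notin V(T_u)$. Lemma~\ref{lem: w's are comparable simple} then forces $w_{H'}<_T u$. Since $w_{H'}\neq v$, this puts $w_{H'}$ outside $T_v$, a contradiction.

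You instead use connectivity in its spanning-tree-ordering form. You grow a $\leq_T$-minimum element of $w(\{H_1,\dots,H_j\})$ one member at a time, with a two-case analysis at each step. You then check separately that a minimum element of a set is the root of its minimal subtree.

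Both routes rest on the same ingredient: comparability propagates along $F$-edges, via Lemma~\ref{lem: w's are comparable simple} and Observation~\ref{key observation}. The paper's version is shorter and needs no case split. Yours has two advantages. It directly produces the $\leq_T$-minimum property that is actually used later, in the proof of Lemma~\ref{lem:main cut-dense union}, where every $w_H$ with $H\in V(P_1)$ satisfies $w_H\geq_T w_1$. It also explicitly checks that $w(\mathcal H')$ lies in a single component of the forest $T$, so that $T\llbracket w(\mathcal H')\rrbracket$ is well defined; the paper's proof leaves this implicit.
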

\begin{proof}
    Let $v$ be the root of $T\llbracket {w(\mathcal H')}\rrbracket$ and suppose towards a contradiction that $v\notin w(\mathcal H')$. As $v$ is the root of $T\llbracket {w(\mathcal H')}\rrbracket$, there exists a child $u$ of $v$ in $T$ such that $T_u$ contains some vertices of $w(\mathcal H')$ but not all. As $F[\mathcal H']$ is connected there exist $H,H'\in\mathcal H'$ with $HH'\in E(F)$ such that $w_H\in V(T_u)$ and $w_{H'}\notin V(T_u)$. By the definition of $F$, we know that $H$ and $H'$ intersect robustly. So, by Lemma~\ref{lem: w's are comparable simple}, we get that $w_H$ and $w_{H'}$ are comparable with respect to $T$. But then we have that $w_{H'}<_T u$ and since, by assumption, $w_{H'}\neq v$, we get that $w_{H'}\notin T_v$. This is a contradiction to the assumption that $v$ is the root of $T\llbracket {w(\mathcal H')}\rrbracket$, which implies that $T\llbracket {w(\mathcal H')}\rrbracket\subseteq T_v$.
\end{proof}

\par Finally, we require one more lemma providing us with vertices at large enough $T$-distance. Given a vertex $v\in V(G)$, we write $\mathcal H_v\subseteq\mathcal H$ for the set of $H\in\mathcal H$ such that $v<_T w_H$.

\begin{lemma}\label{lem: path cant split}    
    Let $P$ be a path in $F$, and suppose that $v\in V(G)$ is a vertex such that $P[\mathcal H_v]$ has at least $16/(\lambda  \beta'^2)+1$ components. Then there is an edge $HH'\in E(F)$ such that $f_H\leq_T w_{H'}$ and $f_H$ and $w_{H'}$ are at $T$-distance at least $4d/\beta' $.
\end{lemma}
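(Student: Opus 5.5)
The idea is to use the components of $P[\mathcal H_v]$ to produce many graphs whose $w$-vertices all lie on one branch of $T$, and then to locate an edge of $P$ that crosses in or out of $\mathcal H_v$. Write $P=H_1H_2\cdots H_s$. Let $A_1,\dots,A_r$ be the components of $P[\mathcal H_v]$ listed in order along $P$, with $r\geq 16/(\lambda\beta'^2)+1$. Each $A_j$ is a subpath of $P$ made of consecutive $H_i$ with $v<_T w_{H_i}$. Between two consecutive components $A_j$ and $A_{j+1}$ there is at least one $H_i\notin\mathcal H_v$. Consider the last element $H^{(j)}$ of $A_j$ and the next element $K^{(j)}$ of $P$, which lies outside $\mathcal H_v$. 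They are adjacent in $F$, so they intersect robustly, and by Lemma~\ref{lem: w's are comparable simple} the vertices $w_{H^{(j)}}$ and $w_{K^{(j)}}$ are comparable. Since $v<_T w_{H^{(j)}}$ but $v\not<_T w_{K^{(j)}}$, Observation~\ref{key observation} (both $v$ and $w_{K^{(j)}}$ lie below $w_{H^{(j)}}$ if $w_{K^{(j)}}<_T w_{H^{(j)}}$) gives $w_{K^{(j)}}\leq_T v<_T w_{H^{(j)}}$. The other case, $w_{H^{(j)}}<_T w_{K^{(j)}}$, would give $v<_T w_{K^{(j)}}$, which is impossible.

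Pick one representative $H^{(j)}\in A_j$ for each $j\leq r-1$ (the last element of the component, as above). All their $w$-vertices lie strictly above $v$ in $T$. They are not yet known to be pairwise comparable, so I would restrict to the branch through $v$ and take the set of those descending along a common branch. Alternatively, I would use the fact that the $F$-neighbour $K^{(j)}$ has $w_{K^{(j)}}\leq_T v$. The more robust route is to use the robust intersection vertex $x_j\in V(H^{(j)})\cap V(K^{(j)})\setminus(V_{H^{(j)}}\cup V_{K^{(j)}})$. We have $w_{K^{(j)}}<_T x_j$ and $w_{H^{(j)}}<_T x_j$, so $x_j$ lies below $w_{H^{(j)}}$, and hence below $v$.

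The main step is then as follows. If for some $j$ the $T$-distance between $f_{K^{(j)}}$ (note $f_{K^{(j)}}\leq_T w_{K^{(j)}}\leq_T v$) and $w_{H^{(j)}}$ is at least $4d/\beta'$, we are done. This uses the $F$-edge $K^{(j)}H^{(j)}$ with $f_H:=f_{K^{(j)}}$ and $H':=H^{(j)}$. Otherwise every $w_{H^{(j)}}$ lies within $T$-distance $4d/\beta'$ below $v$, and I want to contradict this using Lemma~\ref{lem: long path in T}. For that I need $\geq 16/(\lambda\beta'^2)$ of the $H^{(j)}$ with pairwise comparable $w$-vertices. This is the main obstacle: distinct components could a priori branch into different subtrees below $v$. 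I would handle it by applying Lemma~\ref{lem: long path in T} in a weakened form whose proof does not need full comparability. In that proof, the sets $V_{H^{(j)}}$ consist of $\beta'd$ vertices comparable with everything in $H^{(j)}$, and $V_{H^{(j)}}$ lies on the path from $f_{H^{(j)}}$ to $w_{H^{(j)}}$. Combined with $f_{K^{(j)}}\leq_T v$, this shows that all of $\bigcup_j V_{H^{(j)}}$ lies within the ball of $T$-depth $4d/\beta'$ below $f_{K^{(j)}}$ along the relevant branches.

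To get comparability, I would instead argue directly that the $w_{H^{(j)}}$ are comparable. Each $x_j$ lies in $V(K^{(j)})$ and $K^{(j)}$ is connected, and the path $P$ links the $K$'s through $F$-edges. Lemma~\ref{lem: w's are comparable} applied to subpaths then gives that roots of subtrees spanned by $w$'s lie in $w(\cdot)$, which should force the components to sit on a single branch. Failing that, I would pass to an independent subfamily as in the proof of Lemma~\ref{lem: long path in T} and count the disjoint $V_{H^{(j)}}\setminus\bigcup V_{H^{(j')}}$ portions, each of size at least $\beta'd/2$. All of them lie below the $f_{K}$'s, and there are $8/\beta'^2$ of them, so the union has more than $4d/\beta'$ vertices and some $w_{H^{(j)}}$ is at $T$-distance at least $4d/\beta'$ from the corresponding $f_{K^{(j)}}$.
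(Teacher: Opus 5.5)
Your setup matches the paper's: you take $H^{(j)}$ to be the last element of the $j$th component and $K^{(j)}$ the next vertex of $P$, you derive $w_{K^{(j)}}\leq_T v<_T w_{H^{(j)}}$, and you aim for the $F$-edge $K^{(j)}H^{(j)}$ with the pair $(f_{K^{(j)}},w_{H^{(j)}})$. All of that is correct. The gap is in how you get the distance $4d/\beta'$. You try to apply Lemma~\ref{lem: long path in T} to the family $\{H^{(j)}\}$, and, as you suspect, their $w$-vertices need not be pairwise comparable. Nothing available forces them onto one branch. Two consecutive components can have their $w$-vertices in $T_{u_1}$ and $T_{u_2}$ for distinct children $u_1,u_2$ of $v$. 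The separating $K^{(1)}$, with $w_{K^{(1)}}\leq_T v$, can then contain robust-intersection vertices in both subtrees. This is consistent with Lemma~\ref{lem: w's are comparable simple}, and also with Lemma~\ref{lem: w's are comparable}, which only says that the root of the subtree spanned by the $w$'s of a connected family is itself one of those $w$'s. So the claim that it ``should force the components to sit on a single branch'' is false. Your fallback count fails for the same reason. The sets $V_{H^{(j)}}$ can have a large union while lying on many different branches below $v$, each contributing only about $\beta'd$ vertices. A large union therefore does not give one long chain, and no single $w_{H^{(j)}}$ need be far from $f_{K^{(j)}}$.

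The missing idea is to apply Lemma~\ref{lem: long path in T} to the other side of the crossing edges, namely to $\{K^{(1)},\dots,K^{(r-1)}\}$.
\begin{itemize}
\item These graphs are distinct, since each lies on $P$ strictly between $A_j$ and $A_{j+1}$, and there are $r-1\geq 16/(\lambda\beta'^2)$ of them.
\item Every $w_{K^{(j)}}$ satisfies $w_{K^{(j)}}\leq_T v$, so all of them lie on the root-to-$v$ path of $T$ and are pairwise comparable for free.
\item The lemma then yields $i,j$ with $f_{K^{(i)}}<_T w_{K^{(j)}}$ at $T$-distance at least $4d/\beta'$.
\item Since $f_{K^{(i)}}<_T w_{K^{(j)}}\leq_T v<_T w_{H^{(i)}}$, we get $f_{K^{(i)}}\leq_T w_{H^{(i)}}$ at $T$-distance at least $4d/\beta'$.
\end{itemize}
The required edge is $K^{(i)}H^{(i)}\in E(F)$. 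This is exactly the paper's proof. You already had the key fact $w_{K^{(j)}}\leq_T v$, but applied the lemma to the wrong family.
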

\begin{proof}
    Since $P[\mathcal H_v]$ has at least $16/(\lambda \cdot \beta'^2 )+1$ components, we can choose distinct $$H_1,H_1',\ldots,H_{16/(\lambda \cdot \beta'^2 )},H_{16/(\lambda \cdot \beta'^2 )}'$$ such that $H_iH_i'\in E(P)$ with $H_i\in \mathcal H_v$ and $H_i'\notin \mathcal H_v$. These choices are made by scanning along the path $P$, and setting $H_i$ as the last vertex of the $i$th component of $P[\mathcal H_v]$, and $H_i'$ as the next vertex on the path. As $H_i$ and $H_i'$ intersect robustly, by Lemma~\ref{lem: w's are comparable simple} $w_{H_i}$ is comparable with $w_{H_i'}$, and hence $w_{H_i'}$ and $v$ are comparable by Observation~\ref{key observation}. As $H_i'\notin \mathcal H_v$, we must have $w_{H_i'}\leq_T v$. Let $W$ be the set of $w_{H_i'}$ and note that vertices of $W$ are pairwise comparable again by Observation~\ref{key observation}. By Lemma~\ref{lem: long path in T}, there is $i,j$ such that $f_{H_i'}$ is at $T$-distance at least $4d/\beta' $ from $w_{H_j'}$ with $f_{H_i'}<_Tw_{H_j'}$. Since $w_{H_j'}\leq_T v\leq_T w_{H_i}$, $f_{H_i'}$ is also at $T$-distance at least $4d/\beta' $ from $w_{H_i}$, completing the proof since $H_iH_i'\in E(F)$ by assumption.
\end{proof}
We can now combine all of these tools to prove the main result of this section.

\begin{proof}[Proof of \cref{lem:main cut-dense union}]
    Our goal is to show there are $H,H'\in\mathcal H$, either equal or with $HH'\in E(F)$, such that $f_H$ is at $T$-distance at least $2d/\beta' $ from $w_{H'}$. To see that this is sufficient, recall the diameter of $H$ and $H'$ is at most $1/\beta' $ by Lemma~\ref{lem: cut-dense properties}, so we can then deduce there is a path in $G$ of length at most $2/\beta' $ from $f_H$ to $w_{H'}$. It then follows that some edge on this path has $T$-length at least $d$. So it remains to achieve the stated goal.
    
    With the aim of applying Lemma~\ref{lem: average degree in cluster graph after removal}, consider $F$ from the following perspective. First, we create a hypergraph on $V(G)$ which contains a hyperedge on $V(H)$ for every $H\in\mathcal H$. Then, we remove from the hyperedge $V(H)$ the vertices in $V_H$. Finally, we take the intersection graph of this hypergraph and observe that it is exactly $F$. Therefore, $F$ has average degree at least $\lambda/(8\gamma)$ by Lemma~\ref{lem: average degree in cluster graph after removal}, where we use that $\lambda\gg\beta',\zeta$. By Lemma~\ref{lem: get path from average degree}, $F$ contains a vertex $H^*$ together with a path $P$ containing at least $\lambda /(16\gamma)$ neighbors of $H^*$. Let $\mathcal N$ be the set of these neighbors. 
    
    By Lemma~\ref{lem: w's are comparable simple}, for each $H\in\mathcal N$, $w_H$ is comparable to $w_{H^*}$. Let $\mathcal N'\subseteq \mathcal N$ be the set of $H\in\mathcal N$ with $w_H\leq_T w_{H^*}$. By Observation~\ref{key observation}, $w_H$ and $w_{H'}$ are comparable for all $H,H'\in\mathcal N'$.
    If $|\mathcal N'|\geq 16/(\lambda \cdot \beta'^2 )$ then by Lemma~\ref{lem: long path in T} there are $H,H'\in\mathcal N'$ with $f_H<_T w_{H'}$ at $T$-distance at least $4d/\beta' $ from each other. Since $w_{H'}\leq_T w_{H^*}$, it follows that $f_H$ is at $T$-distance at least $4d/\beta' $ from $w_{H^*}$, concluding the proof.
    
    Therefore, we may assume that $|\mathcal N'|< 16/(\lambda \cdot \beta'^2 )$ implying that $P[\mathcal H_{w_{H^*}}]$ contains at least $\lambda/(16\gamma)-16/(\lambda \beta'^2) \geq\lambda /(32\gamma)$ graphs of $\mathcal N$ (here we used that $\gamma\ll \lambda,\beta'$). Additionally, by Lemma~\ref{lem: path cant split}, we may assume $P[\mathcal H_{w_{H^*}}]$ has at most $16/(\lambda \beta'^2 )$ components, otherwise we are done. Thus, one component, say $P_1$, must contain at least $\lambda /(32\gamma)\cdot \lambda  \beta'^2 /16$ graphs of $\mathcal N$. 
    \par By Lemma~\ref{lem: w's are comparable}, the root $w_1$ of $T\llbracket w(V(P_1))\rrbracket$ is itself in $w(V(P_1))$. Let $H_1\in V(P_1)$ be such that $w_{H_1}=w_1$. 
        \par We may assume there are at most $16/(\lambda  \cdot \beta'^2 )$ graphs $H\in V(P_1)$ such that $w_1=w_H$. To see this, suppose the contrary, and note that such $H$ have the collection of their $w_H$ being (trivially) comparable. Thus, by Lemma~\ref{lem: long path in T}, we can conclude that there exists some $H$ so that $f_H$ and $w_H$ (where $w_H=w_1$) are of $T$-distance at least $4d/\beta'$, which allows us to achieve our goal.
    \par The members $H\in V(P_1)$ for which $w_H$ is not equal to $w_1$ must have $w_H\geq_T w_1$. Therefore, the previous paragraph allows us to assume that $P_1[\mathcal H_{w_1}]$ contains at least $$(\lambda/(32\gamma))\cdot (\lambda \beta'^2/16) - 16/(\lambda\beta'^2) \geq \lambda /(32\gamma)\cdot (\lambda  \beta'^2 /16)\cdot (1/2)$$ members of $\mathcal N$, again using that $\gamma \ll \lambda,\beta'$. Furthermore, $P_1[\mathcal H_{w_1}]$ has, by Lemma~\ref{lem: path cant split}, at most $16/(\lambda \cdot \beta'^2 )$ components, as otherwise we again achieve our goal. By averaging, we can thus conclude that one component of $P_1[H_{w_1}]$, say $P_2$, must contain at least $\lambda /(32\gamma)\cdot (\lambda \beta'^2 /16)^2\cdot 1/2$ graphs of $\mathcal N$. Since $\gamma$ is sufficiently small compared to $\lambda $ and $\beta' $, we can iterate this process $16/(\lambda  \beta'^2 )$ times (we can iterate $i$ times as long as $\lambda /(32\gamma)\cdot (\lambda  \beta'^2 /16)^{i+1}\cdot (1/2)^i\geq 32/(\lambda\beta'^2)$), obtaining a sequence $H_1,\ldots,H_{16/(\lambda\beta'^2)}\in V(P)$ satisfying $w_{H^*}<_T w_{H_1}<_T \dots<_T w_{H_{16/(\lambda \cdot \beta'^2 )}}$ such that $\mathcal H_{w_{H_{16/(\lambda \cdot \beta'^2 )}}}$ contains some $H\in\mathcal N$. Note that then $w_{H_{16/(\lambda \cdot \beta'^2 )}}<_T w_H$.
    
    By Lemma~\ref{lem: long path in T}, there is $i$ such that $f_{H_i}$ is at $T$-distance at least $4d/\beta' $ from some $w_{H_j}$, and hence at $T$-distance at least $4d/\beta'$ from $w_{H_{16/(\lambda \cdot \beta'^2 )}}$. Then, either $f_{H_i}$ is at $T$-distance at least $2d/\beta' $ from $w_{H_i}$, or otherwise we must have that $w_{H_i}$ is at $T$-distance at least $2d/\beta' $ from $w_{H_{16/(\lambda \cdot \beta'^2 )}}$. In the first case, we achieve our goal with $H=H'=H_i$, so suppose that we are in the second case. Then, $w_{H^*}$ and $w_H$ are also at $T$-distance at least $2d/\beta' $ by the chain of inequalities above. Of course, we also have that $f_{H^*}\leq_T w_{H^*}$. As $H\in\mathcal N$, we get $H^*H\in E(F)$, so we have achieved our goal.
\end{proof}

\section{Proof of main results}\label{sec:final}
In this section we provide proofs of Theorem~\ref{thm: main} and Theorem~\ref{thm:EG hyperstability}. While the two proofs are somewhat similar, we present the proof of Theorem~\ref{thm:EG hyperstability} first as it is a bit simpler.

\subsection{Proof of Theorem~\ref{thm:EG hyperstability}}
First, let us recall the theorem statement.
\vspace{2mm}

\noindent \textbf{Theorem \ref{thm:EG hyperstability}. }\textit{For every $c>0$ and every sufficiently large $d$, the following holds for every $n$-vertex graph $G$. Either $G$ contains a cycle of length at least $d$, or else one can delete at most $c dn$ edges from $G$ to obtain a graph in which every connected component has a vertex-cover of order at most $d$.}
\vspace{2mm}

\noindent We will use the following simple lemma.
\begin{lemma}\label{lem: remove o(1)}
    The following holds for every $c',d>0$ and every graph $G$ with a vertex-cover of size $(1+c')d$. One can delete at most $2c'e(G)$ edges from $G$ to obtain a graph with vertex-cover of size at most $d$.
\end{lemma}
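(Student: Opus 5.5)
Let $X$ be a vertex-cover of $G$ with $|X|\leq(1+c')d$; we may assume $|X|>d$, since otherwise there is nothing to delete. The plan is to throw away the $|X|-d\leq c'd$ vertices of $X$ that carry the fewest edges, and delete every edge incident to them. What remains of $X$ is then a vertex-cover of the remaining graph of size $d$. So it suffices to show that these removed vertices are incident to at most $2c'e(G)$ edges in total.

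For the averaging, every edge of $G$ has at least one endpoint in $X$, so
\[
\sum_{x\in X}\deg_G(x)\leq 2e(G).
\]
(The bound $e(G)\leq\sum_{x\in X}\deg_G(x)$ also holds, but only the upper bound is needed.) Order $X$ by degree and let $Y\subseteq X$ consist of the $|X|-d$ vertices of smallest degree. By averaging,
\[
\sum_{y\in Y}\deg_G(y)\leq \frac{|Y|}{|X|}\sum_{x\in X}\deg_G(x)\leq \frac{c'd}{|X|}\cdot 2e(G)\leq 2c'e(G),
\]
where we use $|Y|\leq c'd$ and $|X|>d$.

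Deleting all edges incident to $Y$ therefore removes at most $2c'e(G)$ edges. In the resulting graph every edge still has an endpoint in $X$, but not in $Y$, so $X\setminus Y$, of size $d$, is a vertex-cover.

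There is no real obstacle here. The only care needed is the averaging constant: it works because $|X|>d$. If $d$ is not an integer, we instead remove $\lceil|X|-d\rceil$ vertices, which the paper's convention of ignoring floors and ceilings allows.
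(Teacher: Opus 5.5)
Your proof is correct and is essentially the paper's argument in derandomized form. The paper deletes all edges at a uniformly random set of $d'-d$ vertices of a minimum cover and uses that each cover vertex is removed with probability at most $c'$, so each edge is hit with probability at most $2c'$. You instead remove the $|X|-d$ lowest-degree cover vertices and bound their degree sum by $\frac{|X|-d}{|X|}\sum_{x\in X}\deg_G(x)\leq 2c'e(G)$, which is the same computation.
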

\begin{proof}
    Let $V\subseteq V(G)$ be a minimum vertex-cover of $G$ and set $d'=|V|$. If $d'\leq d$ then we are done, so suppose that $d'>d$. Let $V'\subseteq V$ be obtained by selecting $d'-d$ vertices uniformly at random. Finally, we obtain $G'\subseteq G$ by deleting all edges incident to $V'$ from $G$. Then, $G'$ has a vertex-cover of size at most $d$, namely $V\setminus V'$, so that it remains to show that the expected number of deleted edges is at most $2c'e(G)$. The probability that a specific vertex is in $V'$ is at most $\frac{d'-d}{d}\leq c'$. Therefore, any edge is incident to $V'$ with probability at most $2c'$. The bound follows by the linearity of expectation. 
\end{proof}

\begin{proof}[Proof of Theorem~\ref{thm:EG hyperstability}]
    We begin by explaining the choice of parameters, which is somewhat intricate. We are given $c>0$ and set $c'=c/3$. Set $\varepsilon=\eta=2c'/3$, $C=1+c'$, $J=1+\eta/8$ and $\xi = 1-(1+c')^{-1}$. We then obtain $\delta$ from Lemma~\ref{lem: nowhere dense case} and, subsequently, $\lambda,q,M$ from Theorem~\ref{thm: structure}. Next, we get $\zeta$ through Lemma~\ref{lem:main cut-dense union}, then $\beta$ from Theorem~\ref{thm: structure} and $\gamma$ from Lemma~\ref{lem:main cut-dense union}. Finally, we obtain $\rho$ from Theorem~\ref{thm: structure}. Let $d$ be large enough compared to all these parameters.

If $e(G)\geq dn/2$ then $G$ contains a cycle of length at least $d$ by the Erd\H{o}s--Gallai theorem. Therefore, let us assume that $e(G)<dn/2$. Let $T\subseteq G$ be a DFS-forest of $G$ and set $G'=G\setminus T$. Apply \cref{thm: structure} to write $G'=G_0\cup G_1\cup G_2\cup G_3\cup G_4$, and note that then $G=T\cup G_0\cup G_1\cup G_2\cup G_3\cup G_4$. Observe that every component of $G_1$ has a vertex-cover of size at most $Cd=(1+c')d$. By Lemma~\ref{lem: remove o(1)} applied to every component of $G_1$, we can delete at most $2c'e(G_1)$ edges from $G_1$ to obtain a graph in which every connected component has a vertex-cover of size at most $d$. Thus, it suffices to show that, under the assumption that $G$ does not contain a cycle of length at least $d$, $T,G_0$ and $G_3$ each contain at most $c'dn/3$ edges, and $G_2$ and $G_4$ are both empty.
\par Note that $T$ is a forest so $T$ contains at most $n-1\leq c'dn/3$ edges, as $d\gg c'^{-1}$.
Theorem~\ref{thm: structure} guarantees that $G_0$ contains at most $\varepsilon e(G)\leq \varepsilon dn/2\leq c'dn/3$ edges. 

Moving forward, suppose $G_2$ is not empty. Then there exists $H\subseteq G_2$ which is $\rho$-cut-dense and contains a $qd$-regular subgraph of order at least $Cd$ and $|V(H)|\leq Md$. By Lemma~\ref{lem:cut-dense case} applied with $p=1$, $H$ contains a cycle of length at least $(1-\xi)Cd= d$ and, therefore, so does $G$.

Next, if $G_3$ contains more than $c'dn/3$ edges then $G_3$ has average degree at least $\eta d$. By Lemma~\ref{lem: nowhere dense case}, it would then follow that $G_3$ contains an edge of $T$-length at least $d$. Consequently, $G$ would contain a cycle of length at least $d$. Hence, we can conclude that $G_3$ contains less than $c'dn/3$ edges. 

 Finally, suppose that $G_4$ is not empty. By Lemma~\ref{lem:main cut-dense union}, $G_4$ would then contain an edge of $T$-length at least $d$ implying that $G$ contains a cycle of length at least $d$, hence we can also suppose this is not the case, concluding the proof.
\end{proof}

\subsection{Proof of Theorem~\ref{thm: main}}
We reproduce Theorem~\ref{thm: main} below for the reader's convenience.
\vspace{2mm}

\noindent \textbf{Theorem \ref{thm: main}. }\textit{For every $c>0$ there exists $K$ such that the following holds for all $d\geq K$ and $p\geq K/d$. Let $G$ be a graph on $n$ vertices with average degree $d$. Then, with probability tending to $1$ as $n\to \infty$, $G_p$ contains a cycle of length at least $(1-c)d$.}

\begin{proof}[Proof of Theorem~\ref{thm: main}]
First, let us elaborate on the (unfortunately) quite complicated way of choosing the parameters. We note that almost all parameters in lemmas and theorems referred to in this proof correspond to the parameter here with the same label. The only exceptions to this are the application of Lemma~\ref{lem: cover to dense} which we will apply with $(1-\tau)d$ in place of $d$ as well as with $C/(1-\tau)$ instead of $C$ and Lemma~\ref{lem: dense case}, which we will apply with $(1-2\tau)d$ instead of $d$. To begin with, we are given $c>0$. Set $\tau = c/3$, $\varepsilon=\eta=\tau/3$, $C=2$, $J=1+\eta/8$ and $\xi = 1/2$. Let $\delta>0$ be as in Lemma~\ref{lem: nowhere dense case} and $\ell$ from Lemma~\ref{lem: cover to dense}. We then obtain $\lambda,q,M>0$ from Theorem~\ref{thm: structure}. Next, we get $\zeta>0$ through Lemma~\ref{lem:main cut-dense union}. From Theorem~\ref{thm: structure}, we then obtain $\beta>0$, with which we get $\gamma>0$ from Lemma~\ref{lem:main cut-dense union}. Next, we let Theorem~\ref{thm: structure} define $\rho$. Let $K$ be large enough compared to all these parameters, $d\geq K$ and $p\geq K/d$. Let $G$ be a graph on $n$ vertices with average degree $d$ and note that this implies $n\geq d$.

Let us run the percolated DFS-algorithm on $G$, producing, by Fact~\ref{fact: DFS-forest}, a DFS-forest $T\subseteq G_p$ of the graph $G'$ consisting of the edges that are not queried. By Fact~\ref{fact: DFS edges revealed}, at most $n-1$ edges are used by the percolated DFS-algorithm. Recall that the percolated DFS-algorithm uses an edge $e$ if and only if it queries $e$ and $e$ is present in $G_p$. We can couple the outcomes of the queries of the percolated DFS-algorithm with an infinite sequence $X_1,X_2,\ldots$ of independent Bernoulli random variables with parameter $p$ such that the $i$th queried edge is used if and only if $X_i=1$. Let $Q$ denote the number of queried edges. The earlier observation that at most $n-1$ edges get used then translates to $\sum_{i=1}^QX_i\leq n-1$. A standard application of the Chernoff bound together with a union bound over all $Q\geq 2dn/K$ shows that with high probability (as $n\to \infty$), $Q< 2dn/K$. For the remainder of the argument, let us assume this is the case. Therefore, we may assume that $G'$ contains at least $(1-4/K)dn/2$ edges. Note that we have not touched the randomness of the edges in $G'$, so that each edge of $G'$ is present in $G_p$ with probability $p$ independently of all other edges. Let $L,H_1,\ldots,H_r,F_1,\ldots,F_h$ be a collection of edge-disjoint subgraphs of $G'$ maximizing $(|E(L)|,r,h)$ lexicographically such that
\begin{itemize}
    \item every edge in $L$ has $T$-length at least $d$,
    \item each $H_i$ is $\rho$-cut-dense, contains a $qd$-regular subgraph of order at least $Cd$ and $|V(H_i)|\leq Md$, and,
    \item each $F_i$ is of size at most $\ell d$ and has average degree at least $(1-2\tau)d$.
\end{itemize}
 Next, we argue that if any of $L$, $\bigcup_{i=1}^rH_i$ or $\bigcup_{i=1}^h F_i$ contains at least $dn/K$ edges then $G_p$ contains a cycle of length at least $(1-3\tau)d=(1-c)d$ with high probability. To see this we treat the three cases separately. Suppose that $|E(L)|\geq dn/K$. Since every edge in $L$ produces a cycle of length at least $d$ when added to $T\subseteq G_p$, we get that $G_p$ contains a cycle of length at least $d$ with probability at least $1-(1-p)^{|E(L)|}\geq 1-e^{-p|E(L)|}$. Recall that $p\geq K/d$, so this probability is at least $1-e^{-n}$. Next, let us consider the situation when $\bigcup_{i=1}^rH_i$ contains at least $dn/K$ edges. Each $H_i$ contains at most $M^2d^2$ edges so that $r\geq \frac{dn/K}{M^2d^2}= n/(KM^2d)$. By Lemma~\ref{lem:cut-dense case}, each $H_i$ yields a cycle of length at least $d$ in $G_p$ with probability at least $1-e^{-\Omega(d)}$. Note that these events are independent for different $H_i$ because they are edge-disjoint. Therefore, $G_p$ contains a cycle of length at least $d$ with probability at least $1-e^{-\Omega(n)}$, where we treat $M$ and $K$ as constants. Finally, suppose that $\bigcup_{i=1}^h F_i$ contains at least $dn/K$ edges. We proceed with the same idea as in the previous case. Each $F_i$ contains at most $\ell^2d^2$ edges, so that $h\geq n/(K\ell^2d)$. By Lemma~\ref{lem: dense case}, each $F_i$ retains a cycle of length at least $(1-\tau)(1-2\tau)d\geq (1-3\tau)d$ in $G_p$ with probability at least $1-e^{-\Omega(d)}$. Noting again that this is independent for each $F_i$, we get a cycle of length at least $(1-3\tau)d$ in $G_p$ with probability at least $1-e^{-\Omega(n)}$, where we treat $\ell$ and $K$ as constants.
 
 Let $R=G'-\left(L\cup\bigcup_{i=1}^rH_i\cup\bigcup_{i=1}^h F_i\right)$. As a consequence of the above arguments, we may assume that $R$ contains at least $(1-10/K)dn/2$ edges. Let $R=G_0\cup G_1\cup G_2\cup G_3\cup G_4$ be an edge-decomposition as guaranteed by Theorem~\ref{thm: structure}.
 
 By assumption, $G_0$ contains at most $\varepsilon dn/2$ edges. $G_2$ must be empty so as not to contradict the maximality of $r$. By Lemma~\ref{lem: nowhere dense case}, $G_3$ contains at most $\eta dn/2$ edges as otherwise there is an edge of $T$-length at least $d$ in $R$, contradicting the maximality of $|E(L)|$. Similarly, by Lemma~\ref{lem:main cut-dense union}, $G_4$ is empty. Finally, $G_1$ can contain at most $(1-\tau)dn/2$ edges. Indeed, suppose otherwise. By averaging there is a connected subgraph $H\subseteq G_1$ of average degree at least $(1-\tau)d$ with vertex-cover of size at most $Cd$. By Lemma~\ref{lem: cover to dense}, there exists $H'\subseteq H$ on at most $\ell d$ vertices with average degree at least $(1-\tau)^2d\geq (1-2\tau)d$, contradicting the maximality of $h$. We conclude that 
 $$
 e(R)\leq e(G_0)+e(G_1)+e(G_2)+e(G_3)+e(G_4)\leq \varepsilon dn/2+(1-\tau)dn/2+\eta dn/2<(1-10/K)dn/2,
 $$
 where the last inequality uses $\varepsilon+\eta=2\tau/3$ and $\tau\gg1/K$. This inequality is in direct contradiction to our earlier observation of the opposite inequality.
 \end{proof}

\section{Concluding remarks}\label{sec: concluding}

We begin by reiterating a problem raised by Krivelevich, Lee, and Sudakov~\cite{MR3302900}: is it true that if $G$ has minimum degree $d$ and $p\geq \frac{(1+\eps)\log d }{d}$, $G_p$ almost surely contains a cycle of length $d+1$? Glebov, Naves and Sudakov~\cite{glebov2017threshold} proposed a more refined version of this problem, where minimum degree is replaced with average degree, and the assumption on $p$ is replaced with $p=\frac{\log d+\log\log d+\omega(1)}{d}$. We believe that the methods in this paper offer a clear line of attack for these problems; however, it seems likely that difficulties would arise in their execution. 

Our methods are also likely to offer some leverage for embedding trees in percolated graphs, especially when combined with the techniques used by Pokrovskiy \cite{alexey}. We raise the following conjecture, which is a robust variant of (an approximate form of) the Erd\H os--S\' os conjecture.

\begin{conjecture}\label{conj: robus erdos sos alt}
    For every $c>0$ the following holds for all large enough $K$. Let $G$ be a graph on $n$ vertices with average degree $d$ and let $T$ be a tree on $(1-c)d$ vertices with maximum degree at most $\Delta$. Let $p\geq K \Delta /d$. Then, with probability tending to $1$ as $n\rightarrow\infty$, $G_p$ contains a copy of $T$.
\end{conjecture} We emphasize that $K$ depends only on $c$ in the above statement. When $p=1$, the above conjecture becomes a deterministic statement that is an approximate version of the Erd\H os--S\' os conjecture that is open, to the best of our knowledge. When $p=1$ and $d=\Omega(n)$, however, a result of Davoodi, Piguet, \v{R}ada and Sanhueza-Matamala \cite{davoodi2026asymptotic} confirms the conjecture. When $d=n-1$, the conjecture is simply an assertion about the binomial random graph $G(n,p)$, which holds due to a result of Montgomery~\cite[Theorem 1.4]{montgomery2014sharp} (that builds on prior work \cite{balogh2010large, alon2007embedding}). Given the apparent difficulty of Conjecture~\ref{conj: robus erdos sos alt}, weaker forms might be of interest. For example, it may already be interesting to consider the case when $T$ is a bounded-degree tree. 
 \paragraph{Acknowledgements:}We would like to thank Raphael Steiner for fruitful discussions in the early stages of the project. We are also grateful to Alexey Pokrovskiy for helpful discussions regarding his structure theorem. Part of this research was carried out while the second author was visiting the Institute for Theoretical Studies at ETH Z\"urich, and we thank them for providing a wonderful working environment.

 \paragraph{Statement of AI use:} ChatGPT 5.6 was used to proofread this paper. All of the ideas and writing are due entirely to the authors.

\bibliographystyle{yuval}
{{\bibliography{lib}}}

\appendix
\section{Proof of Theorem~\ref{thm: structure}}\label{sec:appendix}
The goal of this appendix is to give a proof of Theorem~\ref{thm: structure}. First, let us recall the statement.
\vspace{2mm}

\noindent \textbf{Theorem \ref{thm: structure}. }\textit{
    For every $C,J,\varepsilon,{\delta}>0$, there exist $\lambda,q,M>0$, such that for every $\zeta>0$, there exists $\beta>0$ such that for every $\gamma>0$ there exists $\rho>0$ such that for every sufficiently large $d$ and every graph $G$, there is an edge-decomposition of $G$ into $G_0,G_1,G_2,G_3,G_4$ satisfying the following properties
    \begin{description}
        \item[Exceptional edges] The graph $G_0$ contains at most $\varepsilon e(G)$ edges.
        \item[Small vertex-cover] Every component of $G_1$ has a vertex-cover of size at most $Cd$. 
        \item[Large regular cores] $G_2$ is the union of edge-disjoint $\rho$-cut-dense graphs $H$, each of which contains a $qd$-regular subgraph of order at least $Cd$, and $\ab{V(H)}\leq Md$.  
        \item[Nowhere dense] $G_3$ does not contain a set of at most $Jd$ vertices inducing at least ${\delta} d^2$ edges.
        \item[Well-connected pieces] $G_4$ is the edge-disjoint union of a collection $\mathcal H$ of graphs satisfying the following properties.
        \begin{enumerate}
            \item Each $H\in\mathcal H$ contains a subgraph $Q$ of size at least $\beta d$ which is $\beta$-cut-dense and every vertex in $V(H)\setminus V(Q)$ has at least $\beta d$ neighbors in $V(Q)$.
            \item Any two $H,H'\in\mathcal H$ intersect in at most $\zeta d$ vertices.
            \item Every $H\in \mathcal H$ has at least $\lambda d$ vertices which intersect some other $H'\in\mathcal H$.
            \item For every $H\in\mathcal H$ there are at most $1/\lambda$ other $H'\in\mathcal H$ which intersect $H$ in more than $\gamma d$ vertices.
        \end{enumerate}
    \end{description}}

\subsection{Clump-theoretic machinery}
Since the proof of Theorem~\ref{thm: structure} closely follows the argument of Pokrovskiy \cite{alexey}, let us begin by quoting, more or less verbatim, many of the auxiliary definitions and results of \cite{alexey}. 

\begin{definition}[{\cite[Definition 4.1]{alexey}}]\label{pok4.1}
    Let $p,\kappa>0$ and let $m$ be an integer. A \emph{clump} with parameters $p,\kappa,m$ is a graph $K$, as well as families $\mathcal H(K), \mathcal M(K)$, a graph $C(K)$, and an integer $k(K)$, satisfying the following properties.
    \begin{enumerate}[label=(\Alph*),start=8]
        \item $\mathcal H(K)$ is a collection of subgraphs $H_1,\dots,H_h \subseteq K$, each of which has $m$ vertices and is $p$-cut-dense, and which form an edge-partition of $K$. That is, $E(H_i)\cap E(H_j)=\varnothing$ for all $i \neq j$, and $E(K)=\bigcup_{i=1}^h E(H_i)$.\label{pok(H)}

        \setcounter{enumi}{12}
        \item $\mathcal M(K)$ is a collection of exactly $k(K)$ vertex-disjoint graphs $M_1,\dots,M_{k(K)}$, each of which is $p^{13}m$-regular. Moreover, each $M_i$ is a subgraph of some $H \in \mathcal H(K)$. Finally, $k(K)$ is as large as possible subject to this constraint; namely, having fixed $\mathcal H(K)$, there is no collection of $k(K)+1$ vertex-disjoint $p^{13}m$-regular graphs, each of which is a subgraph of some $H \in \mathcal H(K)$. 

        We also let $M(K)=\bigcup_{i=1}^{k(K)}M_i$ denote the union of all graphs appearing in $\mathcal M(K)$.\label{pok(M)}

        \setcounter{enumi}{2}
        \item $C(K)$ is a subgraph of $K$ with at most $4^{k(K)}m$ vertices which is $\kappa^{(10k(K))!}$-cut-dense, and such that $M(K)$ is a subgraph of $C(K)$.\label{pok(C)}
    \end{enumerate}
    Next, given a clump $K$, we define a subgraph $B(K)$ with edge set
    \[
    E(B(K)) = E(M(K)) \cup \{uv\in E(K) : u \in V(M(K)), v \in V(K) \setminus V(M(K)) , \ab{N_K(v) \cap V(M(K))}\geq pm/2\},
    \]
    and whose vertex set is all vertices of $K$ which are incident to at least one edge of $E(B(K))$. 

    Similarly, we let $D(K)$ be the subgraph of $K$ defined as $D(K)=C(K)\cup B(K)$. In particular, by \ref{pok(C)}, we have
    \[
    E(D(K)) = E(C(K)) \cup \{uv : u \in V(M(K)), v \in V(K) \setminus V(M(K)) , \ab{N_K(v) \cap V(M(K))}\geq pm/2\}.
    \]
\end{definition}
We stress that $K,M(K),C(K),B(K),D(K)$ are all graphs, rather than merely sets of vertices. Below some basic properties of these graphs proved in \cite{alexey} are stated. 

\begin{lemma}[{\cite[Fact 4.2]{alexey}}]\label{pok4.2}
    If $K$ is a clump, then $V(M(K))$ is a cover of $B(K)$. 
\end{lemma}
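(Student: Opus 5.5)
This follows directly from the definition of $B(K)$, by splitting its edge set into the two pieces that make it up. We must show that every edge of $B(K)$ has at least one endpoint in $V(M(K))$. The vertices of $B(K)$ are exactly those incident to some edge of $E(B(K))$. So it suffices to check the claim edge by edge, and we do not need to worry about isolated vertices.

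First, take an edge $e\in E(M(K))$. Since $M(K)=\bigcup_{i=1}^{k(K)}M_i$ is a graph, both endpoints of $e$ lie in $V(M(K))$. In particular, $e$ is covered.

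Second, take an edge $uv$ from the other part of $E(B(K))$. That is, $uv\in E(K)$ with $u\in V(M(K))$, $v\in V(K)\setminus V(M(K))$ and $\ab{N_K(v)\cap V(M(K))}\geq pm/2$. By definition $u\in V(M(K))$, so $uv$ is covered by $V(M(K))$.

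Since $E(B(K))$ is the union of these two sets, every edge of $B(K)$ meets $V(M(K))$. Hence $V(M(K))$ is a vertex-cover of $B(K)$.

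The statement does not say $V(M(K))\subseteq V(B(K))$. If one wants that, note that each $M_i$ is $p^{13}m$-regular with $p^{13}m>0$, so every vertex of $M(K)$ is incident to an edge of $M(K)\subseteq B(K)$. The only point needing care is the bookkeeping that "cover" refers to edges, and the argument above handles it.
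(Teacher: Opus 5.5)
Your argument is correct and is the intended one. The paper quotes this as Fact 4.2 of Pokrovskiy without proof, since it follows at once from the definition of $E(B(K))$: each edge either lies in $M(K)$, with both ends in $V(M(K))$, or has its designated endpoint $u\in V(M(K))$.
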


\begin{lemma}[{\cite[Fact 4.3]{alexey}}]\label{pok4.3}
    If $K$ is a clump with parameters $p,\kappa,m$, then $k(K) \leq \ab{V(M(K))}/(p^{13}m)$.
\end{lemma}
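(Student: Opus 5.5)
The plan is a direct counting argument from the definitions in \ref{pok(M)}. The key observation is that $\mathcal M(K)$ consists of exactly $k(K)$ vertex-disjoint graphs $M_1,\dots,M_{k(K)}$. Each of them is nonempty and $p^{13}m$-regular.

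\textbf{Step 1: each $M_i$ is large.} A $D$-regular graph with at least one vertex has at least $D+1$ vertices, since any vertex has $D$ distinct neighbours besides itself. Taking $D=p^{13}m$ gives $\ab{V(M_i)}\geq p^{13}m+1>p^{13}m$ for every $i$. (If $p^{13}m$ is not an integer or is $0$, the convention of the definition is that such $M_i$ are still graphs with at least one vertex.) Either way, $\ab{V(M_i)}\geq p^{13}m$.

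\textbf{Step 2: sum over the $M_i$.} Since the $M_i$ are vertex-disjoint and $M(K)=\bigcup_i M_i$, we get
\[
\ab{V(M(K))}=\sum_{i=1}^{k(K)}\ab{V(M_i)}\geq k(K)\,p^{13}m .
\]
Rearranging gives $k(K)\leq \ab{V(M(K))}/(p^{13}m)$.

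The only point needing care is the degenerate case: one must rule out empty graphs counting as members of $\mathcal M(K)$. If an empty graph were allowed, maximality of $k(K)$ would make $k(K)$ unbounded, contradicting the fact that $k(K)$ is an integer as in \ref{pok(M)}. So the members are nonempty, and Step 1 applies. I expect no real obstacle beyond this remark.
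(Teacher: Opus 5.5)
Your argument is correct and is the standard count behind Pokrovskiy's Fact 4.3, which the paper only cites without proof. The $k(K)$ members of $\mathcal M(K)$ are vertex-disjoint, and each, being a nonnull $p^{13}m$-regular graph, has more than $p^{13}m$ vertices, so summing gives $\ab{V(M(K))}\geq k(K)\,p^{13}m$. Your degenerate-case remark is right in spirit, but it is better phrased as: if the null graph were allowed, the maximum defining $k(K)$ would not exist, rather than as a contradiction with $k(K)$ being an integer.
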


\begin{lemma}[{\cite[Lemma 4.4]{alexey}}]\label{pok4.4}
    If $K$ is a clump with parameters $p,\kappa,m$, then $e(K)\geq p^{13} m\ab{V(B(K))}/10$.
\end{lemma}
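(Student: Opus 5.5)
The plan is to lower-bound $e(K)$ by counting the edges of $B(K)\subseteq K$ according to how they meet $V(M(K))$. Split $V(B(K))$ into $V(M(K))$ and the set $X=V(B(K))\setminus V(M(K))$. In both parts we aim to show that each vertex contributes on the order of $p^{13}m$ edges (or at least $pm/2\geq p^{13}m/2$, since $p\leq 1$), and that no edge is counted more than twice. The constant $1/10$ leaves ample room for these losses.

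\emph{Edges inside $M(K)$.} Each $M_i\in\mathcal M(K)$ is $p^{13}m$-regular by \ref{pok(M)}, and the $M_i$ are vertex-disjoint. Hence
\[
e(M(K))=\tfrac12 p^{13}m\,\ab{V(M(K))}.
\]
These edges lie in $E(B(K))\subseteq E(K)$.

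\emph{Edges from $X$ to $M(K)$.} By the definition of $B(K)$, every $v\in X$ has $\ab{N_K(v)\cap V(M(K))}\geq pm/2$, and all of these edges belong to $B(K)$. They have exactly one endpoint in $V(M(K))$, so they are disjoint from $E(M(K))$ and are counted once for each $v\in X$. This gives at least $\tfrac{pm}{2}\ab X\geq \tfrac12 p^{13}m\ab X$ further edges.

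\emph{Conclusion.} Adding the two counts,
\[
e(K)\geq e(B(K))\geq \tfrac12 p^{13}m\bigl(\ab{V(M(K))}+\ab X\bigr)=\tfrac12 p^{13}m\,\ab{V(B(K))},
\]
which is stronger than the claimed $p^{13}m\ab{V(B(K))}/10$.

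The one point needing care is the accounting of $V(B(K))$. It must equal $V(M(K))\cup X$, where every vertex of $X$ satisfies the degree condition. This follows from the definition of $B(K)$: a vertex outside $V(M(K))$ is incident to an edge of $B(K)$ only through the second type of edge, and that forces the degree condition. Conversely, $V(M(K))\subseteq V(B(K))$ because the $M_i$ are regular of positive degree and hence have no isolated vertices. This is the same observation as Lemma~\ref{pok4.2}, that $V(M(K))$ covers $B(K)$. I do not expect a real obstacle beyond this. If $p>1$ were allowed, one would instead use $\min(p/2,p^{13}/2)$, but cut-density parameters satisfy $p\leq 1$.
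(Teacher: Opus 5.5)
Your proof is correct. The edges of $M(K)$ contribute exactly $\tfrac12 p^{13}m\ab{V(M(K))}$. Each vertex of $X=V(B(K))\setminus V(M(K))$ contributes at least $pm/2\geq p^{13}m/2$ further edges of $K$ into $V(M(K))$, each with exactly one endpoint in $X$. These two edge sets are disjoint, so you in fact obtain the constant $1/2$ rather than $1/10$.

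The paper gives no proof of this statement; it imports it from Pokrovskiy \cite{alexey} along with the rest of the clump machinery. So there is no argument in the paper to compare against, and your direct degree count is a complete, self-contained justification.

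Two optional tidy-ups:
\begin{itemize}
\item You only need the inequality $\ab{V(B(K))}\leq \ab{V(M(K))}+\ab{X}$. The equality, and the remark that vertices of $M(K)$ are not isolated, are therefore unnecessary.
\item The bound $p\leq 1$ is cleanest justified by cases. If $\mathcal M(K)=\varnothing$, then $B(K)$ is empty and the claim is trivial. Otherwise some $p^{13}m$-regular $M_i$ lies inside an $m$-vertex $H\in\mathcal H(K)$, forcing $p^{13}m\leq m-1$. In any case, every application in the paper has $p\leq 10^{-3}$.
\end{itemize}
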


The following shows that most edges of a clump live in $B(K)$.
\begin{lemma}[{\cite[Lemma 4.6]{alexey}}]\label{pok4.6}
    Let $p,\kappa \in (0,\frac{1}{100})$ and let $m$ be sufficiently large with respect to $p$. Let $K$ be a clump with parameters $p,\kappa,m$, and let $H \in \mathcal H(K)$. Then $\ab{E(H)\setminus E(B(K))}\leq p^2 m^2$, and $\ab{E(K)\setminus E(B(K))}\leq 4pe(K)$.
\end{lemma}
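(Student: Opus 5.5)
The plan is to prove the first bound for a single $H\in\mathcal H(K)$ and then deduce the second by summing over the edge-partition of $K$ from \ref{pok(H)}.

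\emph{The second statement.} Since $H$ is $p$-cut-dense on $m$ vertices, splitting $V(H)$ into two halves gives $e(H)\geq p(m/2)^2=pm^2/4$. Hence the first bound gives $\ab{E(H)\setminus E(B(K))}\leq p^2m^2\leq 4p\,e(H)$. Summing over $H\in\mathcal H(K)$, which edge-partition $K$, yields $\ab{E(K)\setminus E(B(K))}\leq 4p\,e(K)$.

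\emph{Classifying the missing edges.} Write $Y=V(M(K))\cap V(H)$ and $X=V(H)\setminus V(M(K))$. An edge of $H$ is not in $B(K)$ only if one of the following holds:
\begin{enumerate}
\item[(i)] both endpoints lie in $X$;
\item[(ii)] it joins $Y$ to a vertex $v\in X$ with $\ab{N_K(v)\cap V(M(K))}<pm/2$;
\item[(iii)] both endpoints lie in $Y$ but the edge is not in $M(K)$.
\end{enumerate}
Call $X_{\rm low}\subseteq X$ the vertices of type (ii).

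\emph{Type (iii).} Each $M_i$ is a subgraph of some member of $\mathcal H(K)$. I would use maximality of $k(K)$ (or an exchange argument) to argue that $H[Y]$ cannot carry many edges outside $M(K)$. I expect this to need care; failing that, I would absorb these edges into the type-(i) style argument below.

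\emph{Type (i).} Here I use maximality of $k(K)$ directly. The graph $H[X]$ is vertex-disjoint from $M(K)$ and is a subgraph of $H\in\mathcal H(K)$. So if $H[X]$ contained a $p^{13}m$-regular subgraph, adding it to $\mathcal M(K)$ would contradict \ref{pok(M)}. Thus $H[X]$ has no $p^{13}m$-regular subgraph.

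The key auxiliary claim is: for $m$ large, every graph on at most $m$ vertices with at least $p^{3}m^2$ edges (say) contains a $p^{13}m$-regular subgraph. To prove it I would apply Lemma~\ref{lem: regularity} to find an $\varepsilon$-regular pair $(A,B)$ of density about $p^3$. I would then delete the few vertices of atypically small or large degree to obtain a balanced bipartite graph with all degrees between roughly $(p^3-2\varepsilon)\ab{A}$ and $(p^3+2\varepsilon)\ab A$. Finally I would extract an $r$-regular spanning subgraph with $r\approx p^4\ab A\geq p^{13}m$ via a max-flow / Gale--Ryser argument, checking the cut condition using regularity for large sets and the degree bounds for small sets. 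This claim is the step I expect to be the main obstacle. It gives $e(H[X])\leq p^3m^2$.

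\emph{Type (ii).} These edges number at most $\frac{pm}{2}\ab{X_{\rm low}}$, so I split by the size of $X_{\rm low}$.
\begin{enumerate}
\item[(a)] If $\ab{X_{\rm low}}\leq pm$, the bound is $p^2m^2/2$.
\item[(b)] If $pm<\ab{X_{\rm low}}\leq (1-p/4)m$, apply cut-density to the partition $(X_{\rm low},V(H)\setminus X_{\rm low})$. This gives at least $p\ab{X_{\rm low}}(m-\ab{X_{\rm low}})$ crossing edges, while at most $\frac{pm}{2}\ab{X_{\rm low}}$ of them can go to $Y$. The remainder lie inside $X$, giving $e(H[X])=\Omega(p^3m^2)$ after adjusting constants, which contradicts the claim.
\item[(c)] If $\ab{X_{\rm low}}>(1-p/4)m$, then $\ab Y\leq pm/4$, so all type-(ii) edges number at most $m\cdot pm/4$. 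This is still too weak, so I would refine by noting these edges are at most $\ab Y\cdot\ab{X_{\rm low}}$ and again using cut-density between $Y$ and $X$ to force many edges inside $X$ unless $\ab Y\leq p m$ is tiny. The thresholds will need to be tuned so that everything totals at most $p^2m^2$.
\end{enumerate}
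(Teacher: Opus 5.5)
\textbf{There are two genuine gaps: type (iii) and type (ii).}

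\textbf{Type (iii).} This step cannot be carried out. With $B(K)$ exactly as displayed in Definition~\ref{pok4.1}, inside $V(M(K))$ only the edges of $M(K)$ are kept, and with that definition the statement is false. Take $K=H=K_m$, $\mathcal H(K)=\{K\}$ and $C(K)=K$. Let $\mathcal M(K)$ consist of $\lfloor m/(p^{13}m+1)\rfloor$ vertex-disjoint cliques on $p^{13}m+1$ vertices. This family is maximum, since every $p^{13}m$-regular graph has at least $p^{13}m+1$ vertices, so $K$ is a clump. All but $O(p^{13}m^2)$ of the $\binom m2$ edges join two different cliques. These are exactly your type (iii) edges, and none of them lies in $B(K)$. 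Hence $\ab{E(H)\setminus E(B(K))}\geq m^2/3$, and the second bound fails as well.

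Maximality of $k(K)$ gives no leverage here, because $k(K)$ already equals the trivial upper bound, so no exchange can improve it. Your type (i) argument cannot absorb these edges either, since it relies on vertex-disjointness from $M(K)$. The lemma can only hold if $B(K)$ also contains every edge of $K$ with both ends in $V(M(K))$. This is presumably the intended definition, and it leaves Lemma~\ref{pok4.2} unaffected. Under that reading, type (iii) is empty.

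\textbf{Type (i) is fine.} Your bound is correct. The auxiliary claim you plan to prove via regularity and Gale--Ryser is just Lemma~\ref{pok4.5}, applied to $H[X]$ with $\eta=p^3$. Indeed, if $e(H[X])\geq p^3m^2$, then $e(H[X])\geq p^3\ab X^2$ and $\ab X>\sqrt2\,p^{3/2}m$, so $0.4p^9\ab X\geq p^{13}m$. This gives $e(H[X])<p^3m^2$.

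\textbf{Type (ii).} This still has a gap.
\begin{itemize}
\item In case (b), the leftover count is $p\ab{X_{\mathrm{low}}}(m-\ab{X_{\mathrm{low}}})-\frac{pm}{2}\ab{X_{\mathrm{low}}}=p\ab{X_{\mathrm{low}}}(\frac m2-\ab{X_{\mathrm{low}}})$, which is nonpositive once $\ab{X_{\mathrm{low}}}\geq m/2$.
\item In case (c), cut-density across $(Y,X)$ only gives a lower bound on $e_H(Y,X)$, so it cannot force edges inside $X$.
\end{itemize}
No tuning of thresholds in this global-cut analysis repairs this.

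The missing idea is to use singleton cuts. Cut-density applied to $(\{v\},V(H)\setminus\{v\})$ gives $\deg_H(v)\geq p(m-1)$ for every $v$. A vertex $v\in X_{\mathrm{low}}$ has fewer than $pm/2$ neighbours in $Y\subseteq V(M(K))$, so it has more than $pm/3$ neighbours in $X$. Hence $\ab{X_{\mathrm{low}}}\cdot pm/3<2e(H[X])<2p^3m^2$, so $\ab{X_{\mathrm{low}}}<6p^2m$. Type (ii) therefore contributes fewer than $3p^3m^2$ edges, with no case analysis needed.

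Altogether $\ab{E(H)\setminus E(B(K))}<4p^3m^2\leq p^2m^2$. Your deduction of the second bound from the first, via $e(H)\geq pm^2/4$ and the edge-partition, is correct.
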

The following lemma gives a sense in which clumps, as well as certain unions of them, are well-connected.
\begin{lemma}[{\cite[Lemma 4.7]{alexey}}]\label{pok4.7}
    Let $0 <\gamma,\kappa \leq p^{13} \leq 10^{-3}$, and let $m$ be sufficiently large with respect to $\gamma$. Let $K_1,\dots,K_t$ be non-empty clumps with parameters $p,\kappa,m$, and let $k \geq \max_i k(K_i)$ be an integer. For all $i \in [t]$, let $I_i \subseteq V(D(K_1)) \cap V(D(K_i))$ be a set of vertices with $\ab{I_i}=\gamma m$. Additionally, for all $i \in [t]$, let $H_i$ be a graph such that either $H_i=\varnothing$ or $H_i \in \mathcal H(K_i)$. Then the graph $$\bigcup_{i=1}^t (D(K_i)[I_i \cup V(C(K_i))] \cup H_i)$$ is $\kappa^{(10k)!}\gamma^{2kt^2}$-cut-dense, where $D(K_i)[S]$ denotes the subgraph of $D(K_i)$ induced on a vertex set $S$.
\end{lemma}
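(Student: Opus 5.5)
The plan is to prove cut-density of the union in two stages. First, show that each individual piece $P_i:=D(K_i)[I_i\cup V(C(K_i))]\cup H_i$ is cut-dense with a loss depending only on $p,\kappa,k,\gamma$. Second, glue the pieces together along their overlaps with $K_1$. The exponent $2kt^2$ in $\gamma$ suggests the gluing is done pairwise: about $t$ merges, each losing a factor polynomial in $\gamma$ whose degree is governed by $k$ and $t$. So I will aim for an explicit gluing lemma and track constants only loosely.

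\textbf{Stage 1: single pieces.} By \ref{pok(C)}, $C(K_i)$ is $\kappa^{(10k(K_i))!}$-cut-dense on at most $4^{k}m$ vertices, and $\kappa^{(10k(K_i))!}\geq\kappa^{(10k)!}$. A vertex $v\in I_i\setminus V(C(K_i))$ lies in $V(D(K_i))$, so it lies in $B(K_i)$. By the definition of $B(K_i)$ it then has at least $pm/2$ neighbours in $V(M(K_i))\subseteq V(C(K_i))$, and these edges belong to $D(K_i)$. The intended fact is that $v$ is joined to a $p/(2\cdot4^k)$-fraction of the cut-dense core $C(K_i)$.

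I will use the elementary fact that adding to an $\alpha$-cut-dense graph $C$ vertices each having at least $\alpha'|V(C)|$ neighbours in $V(C)$ yields an $\Omega(\alpha\alpha')$-cut-dense graph. To see this, fix a partition $A\sqcup B$. Either $C$ is split with both sides containing a constant fraction of $V(C)$, in which case $C$ alone supplies the edges. Otherwise the small side's added vertices send many edges into the large side of $C$.

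For $H_i\in\mathcal H(K_i)$, I will use the maximality of $k(K_i)$ in \ref{pok(M)}. A $p$-cut-dense graph on $m$ vertices still contains a $p^{13}m$-regular subgraph after deleting any $O(p^2m)$ vertices. Hence $H_i$ must meet $V(M(K_i))\subseteq V(C(K_i))$ in $\Omega(pm)$ vertices, since otherwise $k(K_i)$ could be increased. A second elementary gluing fact then applies: if $X,Y$ are $\alpha$-cut-dense and $|V(X)\cap V(Y)|\geq\theta\min(|V(X)|,|V(Y)|)$, then $X\cup Y$ is $\Omega(\alpha^2\theta)$-cut-dense. To prove it, split according to whether $A$ takes a $\theta/2$-fraction of the common vertices. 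This makes each $P_i$ cut-dense with parameter $\kappa^{(10k)!}\cdot\mathrm{poly}(p,4^{-k})$.

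\textbf{Stage 2: gluing to $K_1$.} Each $P_i$ contains $I_i$, which has $\gamma m$ vertices lying in $V(D(K_1))$. These vertices either lie in $V(C(K_1))$ or have $pm/2$ neighbours in $V(M(K_1))$ inside $D(K_1)$. I would like to glue $P_i$ to $P_1$ using the overlap lemma with $\theta\approx\gamma/4^k$, performing the gluings one at a time. Each gluing costs a factor of roughly $\gamma^{O(k)}$ together with the factor coming from the current size ratio. Since the union grows by at most $4^km$ vertices per step, iterating gives the claimed bound $\kappa^{(10k)!}\gamma^{2kt^2}$. An alternative to sequential gluing is a single direct cut argument: given $A\sqcup B$, look at the majority side of each $P_i$, and either some $P_i$ is split in a balanced way (edges come from $P_i$) or one can find an $i$ whose majority side differs from that of $P_1$, and cross edges appear through $I_i$.

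\textbf{Main obstacle.} The hard step is Stage 2, because $P_1$ is induced only on $I_1\cup V(C(K_1))$. A vertex of $I_i$ lying in $V(D(K_1))\setminus(I_1\cup V(C(K_1)))$ is therefore not literally in $P_1$, and its edges to $M(K_1)$ are not obviously present in the union. My first attempt is to show that such vertices carry enough overlap anyway, via their edges to $M(K_1)$ inside $D(K_1)$. If that fails, I would check whether these edges are in fact covered by other pieces. If neither works, the hypothesis on $I_i$ has to be used more carefully, for instance by showing that a $\gamma$-fraction of $I_i$ lies in $V(C(K_1))$, or by passing through the intermediate pieces that contain those edges. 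This linking is where the $t^2$ in the exponent should come from.
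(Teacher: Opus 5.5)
Your Stage~1 is sound up to constants. The gap is exactly the one you isolate under \emph{Main obstacle}, and it cannot be closed. The paper gives no proof of this lemma; it only quotes Pokrovskiy. As transcribed here, the statement is false when some $I_i$ avoids $I_1\cup V(C(K_1))\cup V(H_1)$.

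Here is a counterexample. Take $t=2$, $H_1=H_2=\varnothing$, $N=p^{-13}$ an integer, $\gamma=p^{13}/4$, and $m$ a large multiple of $N$.
\begin{enumerate}
\item Let $K_1$ be the complete graph on an $m$-set $X$, with $\mathcal H(K_1)=\{K_1\}$. A $p^{13}m$-regular graph has at least $p^{13}m+1$ vertices, so $k(K_1)=\lfloor m/(p^{13}m+1)\rfloor=N-1$. Hence $\mathcal M(K_1)$ may be taken to be $N-1$ disjoint cliques of order $p^{13}m+1$.
\item These cliques miss a set $R\subseteq X$ with $\ab{R}=p^{13}m-N+1\geq 2\gamma m$. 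Then $C(K_1)=K_1[X\setminus R]$ satisfies \ref{pok(C)}. Each $v\in R$ has $m-\ab{R}\geq pm/2$ neighbours in $V(M(K_1))$, so $V(D(K_1))=X$.
\item Pick $I_1\subseteq X\setminus R$ and $I_2\subseteq R$, both of size $\gamma m$.
\item Let $K_2$ be the complete graph on an $m$-set $Z$ with $Z\cap X=I_2$, minus the edges inside $I_2$. It is still $p$-cut-dense and is edge-disjoint from $K_1$. Take $\mathcal H(K_2)=\{K_2\}$ and $C(K_2)=K_2$.
\end{enumerate}
All hypotheses hold. Yet the graph in the lemma is $D(K_1)[X\setminus R]\cup D(K_2)[Z]$, and $(X\setminus R)\cap Z=\varnothing$, so it is disconnected. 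Thus the edges from $I_i$ to $M(K_1)$ that you need are genuinely absent, and no $\gamma$-fraction of $I_i$ need lie in $V(C(K_1))$. There are also no intermediate pieces to route through. Your alternative ``direct cut'' argument fails for the same reason.

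What is needed is a change of statement rather than a cleverer argument. The union must also contain the $D(K_1)$-edges between $I_i$ and $V(C(K_1))$, for instance the graphs $D(K_1)[I_i\cup V(C(K_1))]$. This is harmless in Claim~\ref{claim:few big intersections}: these edges lie in $K_1\subseteq G_{\mathrm{dense}}$, add no vertices, and leave the $H_i$ untouched.

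With this change your plan works, and more simply than you propose. By your Stage~1 argument, applied also to $K_1$, each
\[
X_i:=D(K_i)[I_i\cup V(C(K_i))]\cup H_i\cup D(K_1)[I_i\cup V(C(K_1))]
\]
is cut-dense, with the last part glued on along $I_i$. Every $X_i$ now contains $C(K_1)$, whose order lies between $p^{13}m$ and $4^km$. So glue all of them at once along this common core rather than sequentially:
\begin{itemize}
\item If one side $A$ of a cut contains half of $V(C(K_1))$, then $X_i$ alone gives at least $\alpha\ab{V(C(K_1))}\ab{B\cap V(X_i)}/2$ crossing edges.
\item Summing over $i$ and noting that each edge lies in at most $t$ of the $X_i$, the union is $\Omega(\alpha\, p^{13}4^{-k}t^{-2})$-cut-dense.
\end{itemize}
Overall this gives a bound of the form $\kappa^{(10k)!}\cdot\mathrm{poly}(\gamma,p,4^{-k},t^{-1})$, which is all the application needs.

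Two smaller corrections:
\begin{itemize}
\item Your two-piece gluing fact is false with $\min(\ab{V(X)},\ab{V(Y)})$; consider gluing a small piece onto a huge one. You need the overlap to be a $\theta$-fraction of the larger piece. Then the loss is $\Omega(\alpha\theta)$, not $\alpha^2\theta$.
\item With a squaring loss, $t$ sequential gluings would cost $(\kappa^{(10k)!})^{2^t}$, far below the target. This is another reason to pay the factor $\kappa^{(10k)!}$ only once, via the common core.
\end{itemize}
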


To start a clumping argument, we need to be able to identify some clumps in the first place, assuming that the graph has a dense spot. The following lemma shows that once we find a cut-dense graph, we can give it the structure of a clump.
\begin{lemma}[{\cite[Lemma 4.8]{alexey}}]\label{pok4.8}
    Let $0 <\kappa \leq p \leq 10^{-3}$, and let $m$ be sufficiently large with respect to $\kappa$. Let $H$ be a $p$-cut-dense graph on $m$ vertices. Then there is a clump $K$ with $C(K)=H,\mathcal H(K)=\{H\}$, parameters $p,\kappa,m$, and satisfying $1 \leq k(K) \leq p^{-13}$. 
\end{lemma}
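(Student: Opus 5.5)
We take $K=H$, $\mathcal H(K)=\{H\}$ and $C(K)=H$. We then let $\mathcal M(K)$ be a maximum-size family of vertex-disjoint $p^{13}m$-regular subgraphs of $H$, and set $k(K)=\ab{\mathcal M(K)}$. With these choices almost every axiom of Definition~\ref{pok4.1} is immediate, and the only real work is showing $k(K)\geq 1$.

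First the routine checks.
\begin{itemize}
\item Condition~\ref{pok(H)} holds trivially: the single graph $H$ has $m$ vertices, is $p$-cut-dense, and edge-partitions $K=H$.
\item Condition~\ref{pok(M)} holds by construction, since the only member of $\mathcal H(K)$ is $H$. The maximality of $k(K)$ is exactly the maximality of the chosen family.
\item For~\ref{pok(C)}, note that $M(K)\subseteq H=C(K)$. Also $\ab{V(C(K))}=m\leq 4^{k(K)}m$.
\item Cut-density in~\ref{pok(C)}: because $k(K)\geq 1$ and $\kappa\leq p<1$, we have $\kappa^{(10k(K))!}\leq \kappa\leq p$. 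So $H$, being $p$-cut-dense, is also $\kappa^{(10k(K))!}$-cut-dense.
\item Upper bound on $k(K)$: an $r$-regular graph with $r\geq 1$ has at least $r+1$ vertices. The members of $\mathcal M(K)$ are vertex-disjoint inside an $m$-vertex graph, so $k(K)(p^{13}m+1)\leq m$, which gives $k(K)\leq p^{-13}$.
\end{itemize}

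The main obstacle is the lower bound $k(K)\geq 1$, that is, showing $H$ contains a $p^{13}m$-regular subgraph. Applying cut-density to the partition $\{v\}\sqcup(V(H)\setminus\{v\})$ shows that $\delta(H)\geq p(m-1)$, so $H$ is dense.

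My plan is to pass to a balanced bipartite subgraph with linear minimum degree and then extract a regular subgraph from it.
\begin{enumerate}
\item Take a maximum cut of $H$. This yields a bipartite subgraph in which every vertex keeps at least half its degree, hence degree at least $pm/3$.
\item Shrink the larger side to balance the parts: delete its vertices of smallest degree into the other side. Then repeatedly delete vertices of low degree, keeping a bipartite graph $B=(X,Y)$ with $\ab X=\ab Y=s\geq pm/4$ and minimum degree at least $\alpha s$, where $\alpha$ is polynomial in $p$.
\item Find an $r$-factor of some balanced induced sub-pair of $B$, with $r=p^{13}m$. I would use the Gale--Ryser / max-flow criterion: an $r$-factor of $B[S_0,T_0]$ exists if $e(S,T)\geq r(\ab S+\ab T-\ab{S_0})$ for all $S\subseteq S_0$ and $T\subseteq T_0$.
\end{enumerate}

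The difficulty in step~3 is that minimum degree alone does not verify this criterion, because neighbourhoods may be badly unbalanced. I would first try an iterative restriction to fix this. If the criterion fails for some pair $(S,T)$, then $e(S,T)$ is small while $S$ and $T$ are large. This means the pair $(S,Y\setminus T)$ is noticeably denser than $B$ (a density-increment step), so we pass to it and re-balance. Each step raises the density by a factor depending only on $p$, so the process must stop after $O_p(1)$ steps. At that point the current pair still has order at least $p^{O(1)}m$ and minimum degree $p^{O(1)}m$, and it satisfies the flow criterion with $r=p^{13}m$.

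The generous exponent $13$ in the statement suggests that exactly this kind of lossy iteration is intended, with $m$ taken large relative to $\kappa$ to absorb rounding. As a fallback, I would take an $\varepsilon$-regular dense pair from Lemma~\ref{lem: regularity} and prune it to a super-regular pair. There the flow criterion is easy to verify. However, the cluster size depends on $L$, so a sufficiently large linear degree is harder to guarantee this way, and I would use it only if the increment argument fails.
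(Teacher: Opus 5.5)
Your routine verifications are correct. With $K=C(K)=H$, $\mathcal H(K)=\{H\}$ and $\mathcal M(K)$ a maximum family of vertex-disjoint $p^{13}m$-regular subgraphs, conditions \ref{pok(H)}, \ref{pok(M)} and \ref{pok(C)} hold, the cut-density follows from $\kappa^{(10k)!}\le\kappa\le p$, and the counting bound $k(K)\le p^{-13}$ holds, all once $k(K)\ge 1$ is known.

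\textbf{The gap is the crux, $k(K)\geq 1$.} Your density-increment iteration does not establish it.

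Suppose Ore's condition fails for $(S,T)$ in a balanced pair $B=(X,Y)$ with $|X|=|Y|=n$ and $\delta(B)\ge\alpha n$. Then you only learn three things: $S\times T$ is almost empty; $\alpha n/2\lesssim|S|,|T|\lesssim(1-\alpha/2)n$; and $S$ sends on average about $\alpha n$ edges into $Y\setminus T$. That is a gain of a factor roughly $1/(1-\alpha/2)$ in average degree relative to $|Y\setminus T|$, and only on the $S$ side.

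It says nothing about the density of $B$, which may be far above $\alpha$. So ``$(S,Y\setminus T)$ is noticeably denser than $B$'' is false in general. Moreover, vertices of $Y\setminus T$ may send most of their edges to $X\setminus S$, and $(S,Y\setminus T)$ is unbalanced since $|S|>|Y\setminus T|$. Rebalancing and pruning to restore a linear minimum degree therefore costs a constant factor, which swamps the $1+\alpha/2$ gain.

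As a result, no potential is shown to increase. Neither termination in $O_p(1)$ steps nor a final degree $\geq p^{13}m$ follows. Your regularity fallback cannot reach $p^{13}m$ either: with $\varepsilon\ll p$ the clusters have size $m/L(\varepsilon)$, which is tower-type small in $p$.

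\textbf{The missing idea.} This step is a black box the paper already imports from \cite{alexey}. By Lemma~\ref{pok3.6}, $e(H)\ge pm^2/4$. Now apply Lemma~\ref{pok4.5} (R\"odl--Wysocka) with $\eta=p/4$ and $r=m$. It gives an $x$-regular subgraph for every $x\le 0.4(p/4)^3m=p^3m/160$. Since $p\le10^{-3}$, we have $p^{13}m\le p^3m/160$, so $H$ contains a $p^{13}m$-regular subgraph and $k(K)\ge1$. The largeness of $m$ needed for $\eta=p/4\ge\kappa/4$ is covered by $m$ being large with respect to $\kappa$. With this substitution, the rest of your argument goes through unchanged.
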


The following lemma is the main engine behind the clumping argument.

\begin{lemma}[{\cite[Lemma 4.9]{alexey}}]\label{pok4.9}
    Let $0 <\kappa \leq p \leq 10^{-3}$, and let $m$ be sufficiently large with respect to $\kappa$. Let $K^+,K^-$ be edge-disjoint clumps with parameters $p,\kappa,m$, and suppose that $k(K^+),k(K^-)\leq \kappa^{-1}$. Suppose further that $\ab{V(D(K^+)) \cap V(D(K^-))}\geq \kappa^{6(10\max\{k(K^+),k(K^-)\})!}m$. Then there exists a clump $K$ with parameters $p,\kappa,m$ satisfying $E(K) = E(K^+)\cup E(K^-), \mathcal H(K) = \mathcal H(K^+)\cup \mathcal H(K^-)$, and $\max\{k(K^+),k(K^-)\} \leq k(K) \leq k(K^+)+k(K^-)$. 
\end{lemma}

Cut-dense graphs are in particular dense, as 
below.
\begin{lemma}[{\cite[Fact 3.6]{alexey}}]\label{pok3.6}
    Let $G$ be a $p$-cut-dense graph on $m\geq 10$ vertices. Then $e(G) \geq pm^2/4$. 
\end{lemma}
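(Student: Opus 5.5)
Fact 3.6 asks us to show that a $p$-cut-dense graph $G$ on $m\geq 10$ vertices has $e(G)\geq pm^2/4$. The plan is to apply the cut-density condition to many balanced bipartitions and average. Write $V=V(G)$ with $|V|=m$, and put $a=\lfloor m/2\rfloor$ and $b=\lceil m/2\rceil$.

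Choose a uniformly random partition $V=A\sqcup B$ with $|A|=a$ and $|B|=b$. Cut-density gives, for every outcome,
\[
e_G(A,B)\geq p|A||B| = p\,ab .
\]
A fixed edge $uv$ is separated by the cut exactly when one endpoint is in $A$ and the other in $B$. This happens with probability
\[
\frac{2ab}{m(m-1)}.
\]
Taking expectations therefore yields
\[
e(G)\cdot\frac{2ab}{m(m-1)}\geq p\,ab,
\qquad\text{so}\qquad
e(G)\geq \frac{p\,m(m-1)}{2}.
\]

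It remains to compare this with the target. We have
\[
\frac{p\,m(m-1)}{2}\geq \frac{pm^2}{4}
\]
exactly when $m-1\geq m/2$, which holds for all $m\geq 2$, and in particular for $m\geq 10$. So the bound follows with room to spare. The same computation also handles the case $ab=0$, but that cannot occur here since $m\geq 10$ forces $a\geq 1$.

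There is no real obstacle. The only point needing care is the probability that a given edge crosses the cut, and the loose constant $1/4$ absorbs any slack, including any floor or ceiling effects. An alternative route would be to average over the partitions $\{v\}\sqcup (V\setminus\{v\})$; this gives $\deg(v)\geq p(m-1)$ and hence $e(G)\geq pm(m-1)/2$ directly, which is the same bound. I would present this degree argument, since it is even shorter.
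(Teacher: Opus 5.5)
Your proof is correct. The degree version you settle on is the cleanest route: applying cut-density to the partition $\{v\}\sqcup(V(G)\setminus\{v\})$ gives $\deg(v)\geq p(m-1)$, hence $e(G)\geq pm(m-1)/2\geq pm^2/4$ already for every $m\geq 2$, so the hypothesis $m\geq 10$ is stronger than needed.

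The paper gives no proof of its own here; it quotes the statement as Fact 3.6 of Pokrovskiy, so there is no argument in the paper to compare yours against. One small remark: a single balanced cut on its own only yields $e(G)\geq p\lfloor m^2/4\rfloor$, which falls short of $pm^2/4$ when $m$ is odd. Your averaging step (or the degree bound) is therefore what actually delivers the exact constant $1/4$.
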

Not all dense graphs are cut-dense, of course, but they do contain large cut-dense subgraphs.
\begin{lemma}[{\cite[Lemma 3.14]{alexey}}]\label{pok3.14}
    Let $\nu \in (0,1)$ be sufficiently small, let $p$ be sufficiently small with respect to $\nu$, let $\mu$ be sufficiently small with respect to $p$, and let $s$ be a sufficiently large integer with respect to $\mu$. Let $G$ be an $s$-vertex graph with $e(G) \geq \nu s^2$. Then $G$ has a $p$-cut-dense subgraph with $\mu s$ vertices. 
\end{lemma}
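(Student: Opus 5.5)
The plan is the standard ``density increment by splitting'' argument. Start with $X_0=V(G)$ and write $d(X)=e(G[X])/\ab{X}^2$, so that $d(X_0)\geq \nu$ and always $d(X)\leq 1/2$. If $G[X_i]$ is $p$-cut-dense we stop. Otherwise there is a partition $X_i=A\sqcup B$ with $e(A,B)<p\ab A\ab B$, and we pass to one of $A,B$, chosen so that the density increases by a definite factor while the size does not shrink too much. Since the density is capped at $1/2$, the process must stop after boundedly many ``expensive'' steps, and then we have found a $p$-cut-dense induced subgraph of size at least a constant fraction (depending on $\nu$) of $s$.

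For the increment step, write $\ab A=a\ab{X_i}$ with $a\leq 1/2$ and $d=d(X_i)$; note that $d\geq\nu$ throughout. We use
\[e(A)+e(B)\geq d\ab{X_i}^2-pa(1-a)\ab{X_i}^2,\]
and split into two cases. \emph{Unbalanced split} ($a\leq \nu/2$): keep $B$. Since $e(A)\leq a^2\ab{X_i}^2/2$, we get
\[d(B)\geq \frac{d-a^2/2-pa}{(1-a)^2}\geq d(1+2a)-a^2/2-pa\geq d(1+a),\]
using $a\leq\nu/2\leq d/2$ and $p\leq \nu/2$. \emph{Balanced split} ($a>\nu/2$): keep whichever of $A,B$ has larger density. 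Because $a^2+(1-a)^2=1-2a(1-a)$, that density is at least
\[\frac{d-pa(1-a)}{1-2a(1-a)}\geq d+a(1-a)(2d-p)\geq d(1+\nu/4),\]
using $p\ll\nu$ and $a(1-a)\geq\nu/4$. In this case the size shrinks by a factor of at least $\nu/2$.

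Bookkeeping then bounds the total shrinkage. The density can only multiply up to $1/(2\nu)$. Hence there are at most $O(\nu^{-1}\log(1/\nu))$ balanced steps, and the unbalanced steps satisfy $\prod(1+a_j)\leq 1/\nu$, so $\sum a_j=O(\log(1/\nu))$. The total size factor is therefore at least
\[\prod(1-a_j)\cdot(\nu/2)^{O(\nu^{-1}\log(1/\nu))}\geq e^{-2\sum a_j}(\nu/2)^{O(\nu^{-1}\log(1/\nu))}=:\mu_0,\]
which depends only on $\nu$. This is where the hierarchy ``$p$ small in terms of $\nu$'' is used; $s$ large guarantees that all sets in the process remain large, so rounding is harmless. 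The result is a $p$-cut-dense $G[X]$ with $\ab X\geq \mu_0 s$.

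The main obstacle is the requirement that the subgraph have \emph{exactly} $\mu s$ vertices, since cut-density is not obviously inherited by subsets. My first attempt would be to run the procedure with a stronger parameter $p'\gg p$ (allowed, since $p$ may be chosen small relative to $\nu$ and we can shrink $\mu$ further), obtaining a $p'$-cut-dense $G[X]$ of size at least $\mu_0 s$. I would then take a uniformly random subset $Y\subseteq X$ of size $\mu s$, where $\mu\leq\mu_0$ is small in terms of $p$, and argue that $G[Y]$ is $p$-cut-dense with positive probability. Cuts with both sides large are handled by applying the regularity lemma (Lemma~\ref{lem: regularity}) to $G[X]$ and noting that a random sample inherits the cluster-level cut structure. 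Cuts with a tiny side are handled via minimum degree: first pass to a subgraph where every vertex has degree at least $p'\ab X/2$, which is possible because cut-density forces most vertices to have high degree, and then use Chernoff bounds to show that degrees survive in $Y$. If this sampling step proves too messy, the fallback is to cite the version of this statement in \cite{alexey} directly, as the paper does for the neighbouring lemmas.
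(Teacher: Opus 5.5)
The paper gives no proof of this statement; it is quoted from Pokrovskiy \cite[Lemma~3.14]{alexey}, so your proposal has to stand on its own. Your density-increment half does, up to harmless constant-chasing in the unbalanced case: it yields a $p'$-cut-dense induced subgraph $G[X]$ with $|X|\geq\mu_0(\nu)s$ for any $p'\leq\nu/4$.

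The gap is the step producing \emph{exactly} $\mu s$ vertices, which is where the content of the lemma lies. You assert that ``a random sample inherits the cluster-level cut structure'' but do not prove it. With the natural choice $\varepsilon=\varepsilon(p')$ this is a genuine sampling theorem, not an observation. The reason is that $|Y\cap V_i|\approx(\mu s/|X|)|V_i|$ can be far below $\varepsilon|V_i|$. Then $\varepsilon$-regularity of $(V_i,V_j)$ says nothing about the subsets of $Y\cap V_i$ and $Y\cap V_j$ cut out by a partition of $Y$. You would need an inheritance theorem for random subsets of regular pairs, or a cut-norm sampling lemma. A union bound over cuts does not substitute, since the cut is chosen after $Y$.

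Two smaller issues. You never say how the $p'$-cut-density of $G[X]$ enters the large-cut estimate. The small-cut step is also misdescribed: taking $A=\{v\}$ in the definition already gives $\delta(G[X])\geq p'(|X|-1)$. So no passing to a subgraph is needed, and doing so could destroy the cut-density you rely on for large cuts.

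All of this is repairable, because $\mu$ is already fixed when you regularize $G[X]$.
\begin{enumerate}
\item Apply Lemma~\ref{lem: regularity} to $G[X]$ with $\varepsilon\leq\varepsilon'\mu/2$, where $\varepsilon',L_0^{-1}\ll p'^{3}$, and let $d_{ij}$ denote the density of $(V_i,V_j)$.
\item Let $Y$ be a uniformly random $\mu s$-subset of $X$ and set $\theta=\mu s/|X|\geq\mu$. With high probability $|Y\cap V_i|=(1\pm\varepsilon)\theta|V_i|$ for all $i$, and by Chernoff $\deg_{G[Y]}(v)\geq p'|Y|/2$ for all $v\in Y$.
\item The degree bound handles every cut with a side of size at most $p'|Y|/4$.
\item For any other cut $(S,Y\setminus S)$, put $x_i=|S\cap V_i|/|Y\cap V_i|$ and lift it to $S^*\subseteq X$ with $|S^*\cap V_i|=x_i|V_i|$. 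Applying $p'$-cut-density to $(S^*,X\setminus S^*)$, together with the regularity upper bounds, gives $\sum d_{ij}x_i(1-x_j)|V_i||V_j|\geq p'|S^*||X\setminus S^*|-O(\varepsilon+L_0^{-1})|X|^2$, summed over ordered regular pairs.
\item Every $S\cap V_i$ with $x_i\geq\varepsilon'$ has size at least $\varepsilon'\theta|V_i|/2\geq\varepsilon|V_i|$, and likewise for $Y\setminus S$. So plain $\varepsilon$-regularity transfers the bound deterministically: $e(S,Y\setminus S)\geq p'|S||Y\setminus S|-O(\varepsilon'+L_0^{-1})|Y|^2\geq\frac{p'}{2}|S||Y\setminus S|$.
\end{enumerate}
Hence $G[Y]$ is $p'/4$-cut-dense, and taking $p'=\nu/4$ and $p\leq\nu/16$ completes the proof.
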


Finally, we record that given a dense spot, we may find in it a dense regular subgraph, as formalized below; we note that stronger results were recently obtained in \cite{chakraborti_regular}, but the following result suffices for our purposes.

\begin{lemma}[{R\"odl and Wysocka \cite{rodl1997note}}]\label{pok4.5}
    Let $\eta \in (0,\frac 12)$, and let $r$ be sufficiently large with respect to $\eta$. Then for every integer $0 \leq x \leq 0.4 \eta^3 r$, every $r$-vertex graph $G$ with $e(G)\geq \eta r^2$ contains an $x$-regular subgraph.
\end{lemma}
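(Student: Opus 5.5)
We must prove Lemma (Rödl–Wysocka): for $\eta\in(0,\tfrac12)$, $r$ large, and $0\le x\le 0.4\eta^3 r$, every $r$-vertex graph $G$ with $e(G)\ge\eta r^2$ contains an $x$-regular subgraph. The case $x=0$ is trivial, since a single vertex is $0$-regular. For $x\ge1$ the plan has three steps: pass to a dense bipartite piece with a degree ratio between the sides, find an $x$-factor-like structure by flows, and then extract a regular subgraph from it.

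\emph{Step 1: reduction to a bipartite graph with controlled degrees.} Take a random balanced bipartition. In expectation half the edges cross it, giving a bipartite subgraph $B=(X,Y)$ with $|X|,|Y|\le r/2+1$ and $e(B)\ge\eta r^2/2$. We then want a subgraph in which every vertex of $X'$ has degree at least some $a$, every vertex of $Y'$ has degree at least some $b$, and the ratio of the sides is controlled. I would obtain this by the standard greedy deletion: repeatedly remove vertices of degree below $\eta r/4$. This kills fewer than $\eta r^2/4$ edges, leaving a nonempty bipartite graph $B'$ of minimum degree at least $\eta r/4$. Its sides $X',Y'$ then have sizes between $\eta r/4$ and $r/2+1$.

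\emph{Step 2: flows, the main obstacle.} An $x$-regular bipartite subgraph is equivalent to a subset $S\subseteq X'$, $T\subseteq Y'$ with $|S|=|T|$ and an $x$-factor of $B'[S,T]$. By Gale–Ryser/max-flow, such a factor exists iff for all $A\subseteq S$, $C\subseteq T$ we have $e(A,C)\ge x(|A|+|C|-|S|)$. The difficulty is that $B'$ may be very unbalanced in degrees: $X'$-vertices may have degree about $r/2$ while $Y'$-vertices have degree about $\eta r/4$.

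I would handle this as follows. First take an $x$-regular-on-one-side structure: choose $S\subseteq X'$ and use flows to find a subgraph in which every vertex of $S$ has degree exactly $x\cdot t$ and every vertex of $Y'$ has degree at most $t$. This is a $b$-matching whose existence follows from Hall-type conditions, using minimum degree $\ge\eta r/4$ and $x\le0.4\eta^3 r$. Then "split" vertices: replace each $S$-vertex by $t$ copies of degree $x$ and apply König's theorem (bipartite graphs of max degree $\Delta$ decompose into $\Delta$ matchings) to reorganize the edges.

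The cleaner route I would actually try first is Pyber-style. Choose the subsets so that in $B'[S,T]$ all degrees lie in $[x,\,x(1+\eta)]$ after deletions. Then the flow condition is checked via the density: a violating pair $(A,C)$ would force $e(A,\overline C)$ to be large against a degree upper bound. The numbers $0.4\eta^3$ come from three losses of factor $\eta$: density, the degree window, and the side-size ratio.

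\emph{Step 3: conclusion.} Once the cut condition is verified for the chosen $S,T$ with $|S|=|T|$, max-flow integrality gives a spanning $x$-regular subgraph of $B'[S,T]$, which is the required subgraph of $G$.

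To equalize $|S|=|T|$, I would pick $S$ of size $\lfloor\eta r/4\rfloor$ in $X'$ and $T$ as the $|S|$ vertices of $Y'$ with the most neighbors in $S$. An averaging argument shows that these retain degree at least $\eta^2 r/8$ into $S$, and hence exceed $x$ with room for the cut inequality.

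I expect the verification of the cut condition with only lower bounds on degrees to be the main obstacle. If it fails, I would instead iterate: remove the violating set and recurse, noting that density only increases.
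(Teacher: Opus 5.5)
\textbf{There is a genuine gap.} The paper gives no proof of this lemma: it is quoted (via Pokrovskiy) from R\"odl and Wysocka and used as a black box, so your argument has to stand on its own, and as written it does not.

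Your set-up is sound. The random bipartition, the deletion of vertices of degree below $\eta r/4$, and the criterion that a balanced pair $(S,T)$ carries a spanning $x$-regular subgraph iff $e(A,C)\ge x(|A|+|C|-|S|)$ for all $A\subseteq S$, $C\subseteq T$, are all correct. But the whole content of the lemma is choosing a balanced pair $(S,T)$ for which this cut condition can actually be verified. None of your mechanisms does this, and local degree information cannot. Let $S=S_1\sqcup S_2$ and $T=T_1\sqcup T_2$ with $|S_1|=|T_2|=m/2+1$ and $|S_2|=|T_1|=m/2-1$, and let $B'[S,T]$ consist of the complete bipartite graphs on $(S_1,T_1)$ and $(S_2,T_2)$. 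Every degree lies in $[m/2-1,m/2+1]$ and the density is about $1/2$. Yet $A=S_1$, $C=T_2$ gives $e(A,C)=0<2x$, so there is no spanning $x$-regular subgraph for any $x\ge 1$. Two biregular components with degree pairs $(x,(1+\eta)x)$ and $((1+\eta)x,x)$, sized so the whole is balanced, defeat your window $[x,x(1+\eta)]$ in the same way.

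The obstruction is an unbalanced component structure, which neither a narrow degree window nor a large minimum degree rules out. So the ``Pyber-style'' verification, and the minimum-degree route you flag as the main obstacle, are not merely unverified but false in general. Your other claims also fall short:
\begin{itemize}
\item Taking $T$ to be the $|S|$ vertices of $Y'$ with most neighbors in $S$ controls only the \emph{average} degree of $T$ into $S$, not the minimum. It gives no lower bound at all on degrees from $S$ into $T$.
\item The vertex-splitting/K\"onig route yields a subgraph with prescribed degree on one side only, not a regular subgraph.
\end{itemize}

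What remains is the fallback ``remove the violating set and recurse, noting that density only increases'', and that is exactly where a proof would have to live; it is not justified. If $(A,C)$ violates the condition, then $|A|>|T\setminus C|$ and $|C|>|S\setminus A|$, so the blocks $(A,T\setminus C)$ and $(S\setminus A,C)$ that carry the edges are unbalanced. Before recursing you must rebalance and restore degree conditions, and both operations can cost density. You would therefore need three things: an explicit potential that strictly improves at each step, a bound on the number of steps, and a bound on how much the pair shrinks. None of this is given. The quantitative bound $x\le 0.4\eta^3 r$ (your ``three losses of a factor $\eta$'') is asserted rather than derived, and a polynomial dependence on $\eta$ is what the statement demands; it is also what the appendix needs, since there the lemma is applied with $\eta=p/8$ and $x=p^{13}m$.

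As it stands, the proposal is a correct framework with the central step left open: selecting a balanced pair that avoids such unbalanced-component obstructions while losing only polynomially in $\eta$. That step is precisely what the cited R\"odl--Wysocka result supplies.
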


\subsection{Proof of Theorem~\ref{thm: structure}}

Finally, we are ready to prove Theorem~\ref{thm: structure}. The dependencies of different parameters in Theorem~\ref{thm: structure} are already quite involved and in the proof we introduce several new parameters. To ease the reading, we demonstrate the relative magnitude of the parameters in Figure~\ref{fig: hierarchy}.

\begin{figure}[h]
    \centering
    \includegraphics[width=0.8\textwidth]{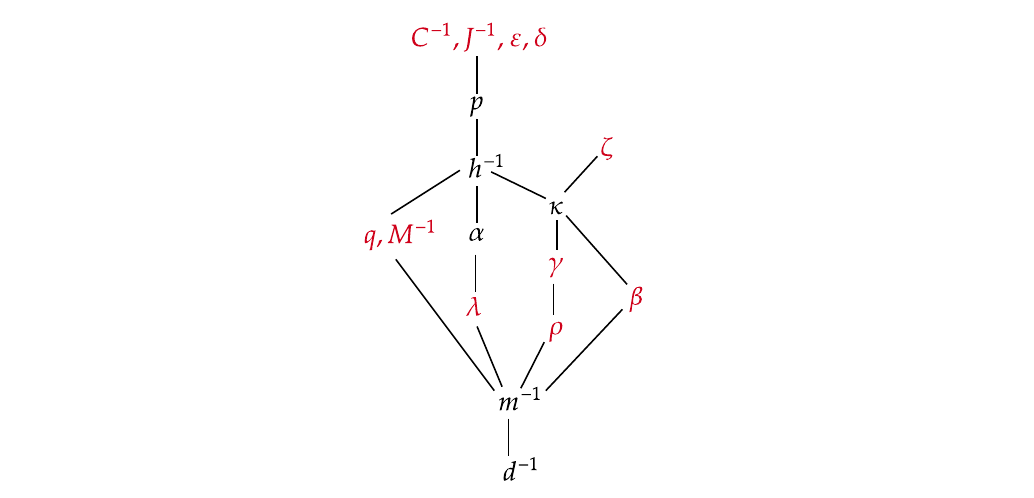}

    \caption{
    The diagram displays the partial order of parameters in the proof of Theorem~\ref{thm: structure}, where $x<y$ for two parameters indicates that $x$ is chosen to be small enough compared to $y$. Highlighted in red are the parameters present in the statement of Theorem~\ref{thm: structure}, whereas the new parameters in black are chosen to facilitate the proof. The two hierarchies are consistent except for the position of $\gamma$, which is assumed to be small in the proof, but this only makes the statement of Theorem~\ref{thm: structure} harder to prove.}
    \label{fig: hierarchy}
\end{figure}

\begin{proof}[Proof of Theorem~\ref{thm: structure}] Based only on the given values of $C,J,\varepsilon,\delta$, we choose new constants $p,h$ and $\alpha$ satisfying
$C^{-1},J^{-1},\varepsilon,\delta \gg p\gg  h^{-1}\gg \alpha$. Given $\zeta$, we pick $\kappa$ sufficiently small with respect to $\alpha$ and $\zeta$. Given $\gamma$, we pick $d$ sufficiently large with respect to all other parameters. Note that the final statement we are trying to prove only gets stronger if $\gamma$ is smaller, so we may assume that $\gamma \leq \kappa^{(10Chp^{-13})!}$. Finally, we set $m\coloneqq d/h$. We recall that we systematically omit floor and ceiling signs, so throughout the proof we will treat $m$ and related quantities, such as $p^{13}m$, as integers.

Although they will arise naturally later in the proof, let us explicitly record the final values of the parameters we will output, namely
\begin{align*}
\lambda \coloneqq \min \left\{\frac \alpha{10},\frac{p^3}{Ch}\right\}, \quad q \coloneqq \frac{p^{13}}{h}&, \quad M \coloneqq \max\left\{2Cp^{-3}h4^{Chp^{-13}},\frac{4^{2Chp^{-13}}}{h}\right\},\\  \rho \coloneqq \min\left\{\kappa^{(10\cdot2Chp^{-13})!}, \gamma^{h^4}\right\}&, \quad \beta\coloneqq\frac{\kappa^{(10Chp^{-13})!}p^{13}}{2h}.
\end{align*}
Note that, as claimed (recall the LHS of Figure~\ref{fig: hierarchy}), $\lambda,q,M$ depend only on $C,J,\delta,\varepsilon$, whereas $\beta$ also depends on $\zeta$ and $\rho$ depends on $\zeta$ as well as $\gamma$.
\par The choices of $p,h,\alpha, \kappa$ and $d$ are not made explicit. We only require them to be chosen so that the following conditions all hold, noting that these requirements are all consistent with the hierarchy as presented in Figure~\ref{fig: hierarchy}. 

\begin{itemize}
    \item $p \leq \min\{10^{-3}, \varepsilon/8\}$,
    \item $p$ is sufficiently small with respect to $J$ and $\delta$, as required by \cref{pok3.14} applied with $\nu=\delta/J^2$,
    \item $1/(\delta^{1/2}h)$ is sufficiently small with respect to $p$ so that we can apply \cref{pok3.14} with $\mu\leq1/(\delta^{1/2}h)$,
    \item $h\geq 2 C^3 p^{-19}$,
    \item $\alpha \leq \varepsilon p^{13}/(2h)$,
    \item $\kappa \leq p^{13}/(Ch)\leq p^{13}$,
    \item $\kappa \leq \zeta/h$,
    \item $\gamma \leq \kappa^{(10Chp^{-13})!}$, as stated above,
    \item $m$ is sufficiently large with respect to $p$ so that $r\coloneqq\sqrt{p/8}\cdot m$ is sufficiently large with respect to $\eta\coloneqq p/8$ to apply \cref{pok4.5}, 
    \item $m$ is sufficiently large with respect to $\kappa$ to apply \cref{pok4.8,pok4.9},
    \item $m$ is sufficiently large with respect to $\gamma$ to apply \cref{pok4.7}.
\end{itemize}

Having introduced all of the parameters, we now begin the proof in earnest.
 Pick a maximal collection of edge-disjoint graphs $H \subseteq G$ where $H$ is a $\rho$-cut-dense graph on at most $Md$ vertices containing a $qd$-regular subgraph of order at least $Cd$, and let $G_2$ be the union of this maximal collection. Next, pick a maximal collection of edge-disjoint $m$-vertex $p$-cut-dense subgraphs $K_1, \dots, K_t$ in $G\setminus G_2$. Let $G_{\mathrm{dense}}\coloneqq\bigcup_{i=1}^t K_i$, and set $G_3$ to be the graph on $V(G)$ with edge set $E(G)\setminus (E(G_2)\cup E(G_{\mathrm{dense}}))$.

\begin{claim}\label{claim:G1 good}
    $G_3$ does not contain a set of at most $Jd$ vertices inducing at least $\delta d^2$ edges.
\end{claim}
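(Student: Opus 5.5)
We argue by contradiction, using the maximality of the collection $K_1,\dots,K_t$. Suppose $S\subseteq V(G)$ has $|S|\leq Jd$ and $e(G_3[S])\geq \delta d^2$. The plan is to find inside $G_3[S]$ an $m$-vertex $p$-cut-dense subgraph. Since $G_3$ is edge-disjoint from $G_2$ and from every $K_i$, such a subgraph could be added to the collection $K_1,\dots,K_t$ in $G\setminus G_2$, contradicting maximality.

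First, we pad $S$ so that it has exactly $s\coloneqq Jd$ vertices, using vertices of $G$ if needed. If $G$ has fewer than $Jd$ vertices, we simply work with $s=|S|\geq \sqrt{2\delta}\,d$, which holds since $S$ spans $\delta d^2$ edges. Then $G_3[S]$ is an $s$-vertex graph with $e(G_3[S])\geq \delta d^2\geq (\delta/J^2)s^2=\nu s^2$. We apply \cref{pok3.14} with $\nu=\delta/J^2$, our $p$ (chosen small enough with respect to $\nu$), and a suitable $\mu$. This yields a $p$-cut-dense subgraph of $G_3[S]$ on exactly $\mu s$ vertices.

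The key step is choosing $\mu$ so that $\mu s=m=d/h$, that is, $\mu=1/(Jh)$ in the padded case, or $\mu=d/(hs)\leq 1/(\sqrt{2\delta}\,h)$ in the other case. The hierarchy was set up exactly so that $1/(\delta^{1/2}h)$ is small enough with respect to $p$. Hence any $\mu\leq 1/(\delta^{1/2}h)$, up to harmless constants, is admissible in \cref{pok3.14}. Moreover, $s\geq \sqrt{\delta}\,d$ is large with respect to $\mu$ because $d$ is huge. One subtlety is that \cref{pok3.14} asks for $\mu$ small relative to $p$ but gives a graph of exactly $\mu s$ vertices, so we must make sure the chosen $\mu$ actually lies in the allowed range. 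I expect this bookkeeping to be the only real obstacle, and it is handled by the parameter conditions listed above.

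Finally, the resulting graph is an $m$-vertex $p$-cut-dense subgraph of $G_3\subseteq G\setminus G_2$, edge-disjoint from all the $K_i$. This contradicts the maximality of $K_1,\dots,K_t$ and proves the claim.
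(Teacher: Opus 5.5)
Your proof is correct and follows the paper's argument. Like the paper, you apply \cref{pok3.14} to the induced subgraph of $G_3$ on the dense set, with $\nu=\delta/J^2$ and $\mu=m/s\le 1/(\delta^{1/2}h)$, to extract an $m$-vertex $p$-cut-dense subgraph that contradicts the maximality of $K_1,\dots,K_t$. Your padding step is harmless but unnecessary: $s=|S|\le Jd$ already gives $e(G_3[S])\ge(\delta/J^2)s^2$, and the paper simply takes $s=|S|$ with $\delta^{1/2}d\le s\le Jd$.
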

\begin{proof}
    Suppose not, and let $G'\subseteq G_3$ be the induced subgraph on these at most $Jd$ vertices. Set $s=|V(G')|$ and note that $\delta^{1/2} d\leq s\leq Jd $. We apply \cref{pok3.14} to $G'$ with parameters $\nu=\delta/J^2,p,\mu=m/s$, and $s$. By our choice of parameters, noting $\mu\leq 1/(\delta^{1/2}h)$, we have that $s^{-1}\ll \mu \ll p \ll \delta$, with all ``sufficiently small'' appropriate for applying \cref{pok3.14}. We conclude that $G'$ contains a $p$-cut-dense subgraph on $\mu s = m$ vertices, contradicting the maximality of the collection $K_1,\dots,K_t$.
\end{proof}

Next, we use \cref{pok4.8} to give each $K_i$ in the collection defining $G_{\mathrm{dense}}$ the structure of a non-empty clump with parameters $p, \kappa, m$, which we may do since $\kappa \leq p$ and $m$ is sufficiently large with respect to $\kappa$. For as long as possible, repeat the following: if we have two edge-disjoint clumps $K^-, K^+$ with $|M(K^-)|, |M(K^+)|\leq Cd$ 
and $|V(D(K^-))\cap V(D(K^+))|\geq \kappa^{6(10\max(k(K^-), k(K^+))!} m$, replace $K^-, K^+$ in our collection with the clump $K$ with $E(K)=E(K^-)\cup E(K^+)$ produced by \cref{pok4.9}. In order to apply \cref{pok4.9}, we need $m$ to be sufficiently large with respect to $\kappa$ (which it is by definition), and that $k(K^+),k(K^-)\leq \kappa^{-1}$. This latter holds because of \cref{pok4.3}, which gives 
\[
k(K^+) \leq \frac{\ab{V(M(K^+))}}{p^{13}m} \leq \frac{Cd}{p^{13}(d/h)} = \frac{Ch}{p^{13}}\leq \kappa^{-1},
\]
where the final step is by our choice of $\kappa \leq p^{13}/(Ch)$.

This process must eventually terminate, because the total number of clumps decreases at every step.
Let $\mathcal K$ be the final collection of clumps we get, noting that these are edge-disjoint, non-empty, and edge-partition $G_{\mathrm{dense}}$ (since the initial clumps had this property, and the property is maintained at each application of \cref{pok4.9}).

\begin{claim}\label{claim:M(Ki)_upper_bound}
$|V(M(K))|\leq Cd$ for all $K\in \mathcal K$.
\end{claim}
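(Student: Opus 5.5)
Suppose some $K\in\mathcal K$ has $|V(M(K))|>Cd$. I will trace how $K$ was built and extract a subgraph contradicting the maximality of $G_2$. The plan has three steps.

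\textbf{Step 1: locate the merge where the bound was first exceeded.} Initial clumps come from \cref{pok4.8} with $k\leq p^{-13}$. Each $M_i$ is $p^{13}m$-regular and lives in an $m$-vertex graph, so $|V(M(K_i))|\leq k(K_i)m\leq p^{-13}m=p^{-13}d/h$. Since $h\geq 2C^3p^{-19}$, this is far below $Cd$. Merges are only performed when both inputs satisfy $|V(M(K^\pm))|\leq Cd$. Hence some merge step produced $K$ from $K^+,K^-$ with $|V(M(K^\pm))|\leq Cd$ but $|V(M(K))|>Cd$. By \cref{pok4.3}, $k(K^\pm)\leq Ch/p^{13}$, so $k(K)\leq k(K^+)+k(K^-)\leq 2Chp^{-13}$. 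Also $|V(M(K))|\leq k(K)m\leq 2Cp^{-13}d$. (If the clump with large $M$ was merged further afterwards, that is impossible, since merges require $|V(M)|\leq Cd$. So $K$ itself is in $\mathcal K$.)

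\textbf{Step 2: $C(K)$ is a candidate for $G_2$.} By \ref{pok(C)}, $C(K)$ is $\kappa^{(10k(K))!}$-cut-dense. Since $k(K)\leq 2Chp^{-13}$, this is at least $\kappa^{(10\cdot 2Chp^{-13})!}\geq\rho$. Moreover
\[
|V(C(K))|\leq 4^{k(K)}m\leq 4^{2Chp^{-13}}d/h\leq Md .
\]
Also $M(K)\subseteq C(K)$ is a disjoint union of $p^{13}m$-regular graphs, so $M(K)$ is itself $p^{13}m=qd$-regular, on more than $Cd$ vertices. So $C(K)$ is a $\rho$-cut-dense graph on at most $Md$ vertices containing a $qd$-regular subgraph of order at least $Cd$. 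If the exact order matters, I simply use all of $M(K)$, which has order $>Cd$, as the definition only asks for order at least $Cd$.

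\textbf{Step 3: contradiction.} $C(K)\subseteq K\subseteq G_{\mathrm{dense}}\subseteq G\setminus G_2$ is edge-disjoint from the chosen collection. Adding it therefore contradicts the maximality of the collection defining $G_2$. The only delicate point is Step 1: one must check that $k$ stays bounded by $2Chp^{-13}$ so that the cut-density does not fall below $\rho$, and that the initial clumps already satisfy the $Cd$ bound so that an offending merge exists. I expect the bookkeeping on $k$ versus $|V(M)|$ via \cref{pok4.3} to be the main thing to verify carefully. The arithmetic on $M$ and $\rho$ matches the chosen values directly.
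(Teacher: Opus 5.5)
Your proposal is correct and follows the paper's proof: a clump in $\mathcal K$ violating the bound must come from a merge of two clumps with $|V(M(K^\pm))|\le Cd$. Then \cref{pok4.3} and \cref{pok4.9} give $k(K)\le 2Chp^{-13}$, so $C(K)$ is a $\rho$-cut-dense graph on at most $4^{2Chp^{-13}}m\le Md$ vertices containing the $qd$-regular graph $M(K)$ of order more than $Cd$, which contradicts the maximality of the collection defining $G_2$.

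For the initial clumps, it is simpler to note that $V(M(K_i))\subseteq V(K_i)$, so $|V(M(K_i))|\le m=d/h\le Cd$; this avoids your detour through $k(K_i)\le p^{-13}$ and the lower bound on $h$.
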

\begin{proof} 
First note that this holds for the initial clumps, since each $M(K_i)$ is contained in some $K_i$ which has order $m=d/h\leq Cd$. 
Now suppose for contradiction that at some point in the joining process, we produced a clump $K$ with $|V(M(K))|>Cd$. Considering the earliest point when this happens, we have that $E(K)=E(K^-)\cup E(K^+)$ for some clumps $K^-, K^+$ with $|V(M(K^-))|, |V(M(K^+))|\leq Cd$. By \cref{pok4.3}, we have $k(K^-), k(K^+)\leq \frac{Cd}{p^{13}m}=Chp^{-13}$. Hence, by \cref{pok4.9}, we have $k(K)\leq k(K^-)+k(K^+)\leq 2Chp^{-13}$. Thus, $C(K)\subseteq G_{\mathrm{dense}}$ is a $\kappa^{(10k(K))!}$-cut-dense graph of order at most $4^{k(K)}m$ containing $M(K)$, which is a $p^{13}m$-regular graph of order at least $Cd$. Together with $\rho \leq \kappa^{(10k(K))!}$, $p^{13}m=qd$, $4^{2Chp^{-13}}m\leq Md$ and $k(K)\leq 2Chp^{-13}$, this is in contradiction with the maximality of the collection defining $G_2$.
\end{proof}

This claim and \cref{pok4.3} show that for all clumps $K\in \mathcal K$, we have $Cd\geq |V(M(K))|\geq p^{13}mk(K)$, which implies  $k(K)\leq Cd/(p^{13}m)=Chp^{-13}$. By \cref{pok4.2}, we also see that each $B(K)$ has a cover of order at most $Cd$, namely $V(M(K))$.
 By the assumption that the process stopped and Claim~\ref{claim:M(Ki)_upper_bound}, we get that for distinct clumps $K, K'\in \mathcal K$ we have $$|V(D(K))\cap V(D(K'))|< \kappa^{6(10\max(k(K), k(K'))!} m\leq  \kappa m.$$  

\begin{claim}\label{claim:few big intersections}
    For every $K\in\mathcal K$ there are at most $Cp^{-3}h$ clumps $K'\in \mathcal K$ with $|V(D(K))\cap V(D(K'))|\geq \gamma m$.
\end{claim}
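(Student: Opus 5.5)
We bound the number of clumps $K'$ meeting $V(D(K))$ in at least $\gamma m$ vertices by double counting incidences between those clumps and the small cover $V(M(K))$. The overlap sets are not disjoint, so we cannot simply count disjoint pieces of size $\gamma m$. Instead, we use that intersections between distinct clumps are tiny, namely smaller than $\kappa m\ll\gamma m$ is false in general. What we know is that $\gamma\le \kappa^{(10Chp^{-13})!}$, which is much smaller than the pairwise bound $\kappa^{6(10\max k)!}m$. So a more structural argument is needed, and we use \cref{pok4.7} to produce cut-dense unions.

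\textbf{Setup.} Suppose for contradiction that there are $t> Cp^{-3}h$ clumps $K_2,\dots,K_{t+1}\in\mathcal K$, each with $|V(D(K_1))\cap V(D(K_i))|\ge\gamma m$, where $K_1=K$. Take a subfamily of exactly $t_0=\lceil Cp^{-3}h\rceil+1$ of them. Choose $I_i\subseteq V(D(K_1))\cap V(D(K_i))$ with $|I_i|=\gamma m$, set $I_1\subseteq V(D(K_1))$ arbitrary of the same size, and pick some $H_i\in\mathcal H(K_i)$ for each $i$. Since every clump has $k(K_i)\le Chp^{-13}$, \cref{pok4.7} (with $k=Chp^{-13}$) applies. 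It shows that
\[
U=\bigcup_{i} \bigl(D(K_i)[I_i\cup V(C(K_i))]\cup H_i\bigr)
\]
is $\kappa^{(10k)!}\gamma^{2kt_0^2}$-cut-dense. The number of clumps $t_0$ depends only on $C,p,h$, so with $k\le Chp^{-13}$ the density is at least $\rho$, provided $\rho\le\gamma^{h^4}$ absorbs $\gamma^{2kt_0^2}$. I would double-check this exponent and adjust $h$ versus $p$ if needed; this bookkeeping is routine.

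\textbf{Finding a regular core.} Each $H_i$ is a $p$-cut-dense graph on $m$ vertices, so by \cref{pok3.6} it has at least $pm^2/4$ edges. The $H_i$ are edge-disjoint, hence $U$ contains at least $t_0pm^2/4$ edges on at most $t_0(4^{k}m+2m)$ vertices. We want a $qd$-regular subgraph of $U$ of order at least $Cd$. The cleanest route is to take the disjoint union of regular subgraphs. Each $M(K_i)$ is $p^{13}m=qd$-regular, but the $M(K_i)$ may overlap. Since the pairwise intersections $|V(D(K_i))\cap V(D(K_j))|<\kappa m$ are small, we greedily pick, inside each $H_i$, a $p^{13}m$-regular subgraph avoiding the vertices already used. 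Removing fewer than $t_0\kappa m$ vertices from the $p$-cut-dense graph $H_i$ still leaves density about $p/4$ (using $\kappa\ll p$). \cref{pok4.5} with $\eta=p/8$ on a subset of size $r=\sqrt{p/8}\,m$ then gives an $x$-regular subgraph for any $x\le 0.4\eta^3 r$; choosing $x=p^{13}m$ is allowed since $p^{13}\ll p^{3.5}$. This yields $t_0$ vertex-disjoint $qd$-regular graphs of orders at least $p^{13}m$ each. Their union has order at least $t_0p^{13}m$, which we would like to be at least $Cd$.

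\textbf{Main obstacle.} The order bound is the crux. We have $t_0p^{13}m\approx Cp^{-10}m$, and we need this to be at least $Cd=Chm$, i.e.\ $p^{-10}\gtrsim h$. That conflicts with $h\ge 2C^3p^{-19}$. So the regular pieces must instead have order about $p^{3}m$, which matches the threshold $Cp^{-3}h$ in the claim. I would apply \cref{pok4.5} to get regular subgraphs whose vertex set has size at least $\sqrt{p/8}\,m\ge p^3m$; \cref{pok4.5} does not specify order, so to guarantee order I would pass to a suitable dense induced subgraph of size roughly $p^{3}m$ first. Then $t_0p^3m> Chm=Cd$. Finally, $|V(U)|\le t_0(4^k+2)m\le Md$ by the choice of $M$. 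So $U$ is a $\rho$-cut-dense graph on at most $Md$ vertices, edge-disjoint from $G_2$ since $U\subseteq G_{\mathrm{dense}}$, containing a $qd$-regular subgraph of order at least $Cd$. This contradicts the maximality of the collection defining $G_2$, exactly as in Claim~\ref{claim:M(Ki)_upper_bound}. Making the order guarantee for the regular pieces rigorous is the step I expect to be hardest.
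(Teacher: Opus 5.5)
Your overall architecture matches the paper's: apply \cref{pok4.7} to $\bigcup_i (D(K_i)[I_i\cup V(C(K_i))]\cup H_i)$, bound its order by $Md$, and contradict the maximality of $G_2$ by exhibiting a large $p^{13}m$-regular subgraph. The gap is the step you flag yourself, and your proposed fix does not close it.

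\textbf{The order bound.} \cref{pok4.5} controls only the degree $x$ of the regular subgraph, not its order, so it may return something as small as $K_{x+1}$. Restricting first to an induced subgraph of size about $p^3m$ only caps the order from above; it gives no lower bound. The missing idea is that a vertex-disjoint union of $p^{13}m$-regular graphs is again $p^{13}m$-regular, so you should not stop after one regular piece per $H_i$.

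The paper takes a maximum $p^{13}m$-regular subgraph $G'$ of $\bigcup_i H_i$ and shows $|V(G')\cap V(H_i\cap D(K_i))|\ge pm/20$ for every $i$. Otherwise $(H_i\cap D(K_i))-V(G')$ still has at least $(p/4-p^2)m^2-pm^2/20\ge pm^2/8$ edges. Then \cref{pok4.5} with $\eta=p/8$ gives another $p^{13}m$-regular piece disjoint from $G'$, contradicting maximality. Inclusion--exclusion then yields $|V(G')|\ge t\cdot pm/20-t^2\kappa m\ge 2Cd$, using $Ch\kappa\le p^{13}$.

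So each clump contributes order about $pm$, not $p^3m$. The threshold $Cp^{-3}h$ does not come from pieces of order $p^3m$. It is simply large enough that $t\,pm/20\gg Cd$, while $t^2\kappa m$ stays negligible. Equivalently, your greedy step works if you iterate it inside each $H_i$, extracting disjoint regular pieces until the leftover is sparse.

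\textbf{Which graph to work in.} Your greedy step has a second flaw. The bound $|V(D(K_i))\cap V(D(K_j))|<\kappa m$ concerns the sets $V(D(K_i))$, not $V(H_i)$. A graph $H_i\in\mathcal H(K_i)$ may have many vertices outside $V(D(K_i))$. Hence regular pieces chosen in $H_j$ can meet $V(H_i)$ in far more than $\kappa m$ vertices, and ``removing fewer than $t_0\kappa m$ vertices'' is unjustified.

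You must first pass to $H_i\cap D(K_i)$. By \cref{pok3.6} and \cref{pok4.6} (since $B(K_i)\subseteq D(K_i)$), it retains at least $(p/4-p^2)m^2$ edges. Carry out the whole extraction there. This confinement to $V(D(K_i))$ is what makes both the maximality subclaim and the inclusion--exclusion step valid.
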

\begin{proof}
Suppose towards a contradiction there is a clump $K=K_1$ such that there are distinct clumps $K_2,\ldots,K_t$, where $t=Cp^{-3}h$, with $|V(D(K_1))\cap V(D(K_j))|\geq \gamma m$ for $1\leq j\leq t$. For $i=1, \dots, t$, pick arbitrary order-$m$, $p$-cut-dense $H_i\in \mathcal H(K_i)$, and pick arbitrary order-$\gamma m$ subsets $I_i\subseteq V(D(K_1))\cap V(D(K_i))$.
We now apply \cref{pok4.7} to this setup, with parameter $k=Chp^{-13}$, to conclude that the graph
\[
H \coloneqq \bigcup_{i=1}^t (D(K_i)[I_i \cup V(C(K_i))] \cup H_i)
\]
is cut-dense with cut-density parameter
\[
\kappa^{(10Chp^{-13})!}\gamma^{2Chp^{-13}t^2}\geq\gamma \cdot \gamma^{2Chp^{-13}(Cp^{-3}h)^2}\geq  \gamma^{h^4}\geq \rho,
\]
where we use our bounds $\gamma \leq \kappa^{(10Chp^{-13})!}$ and $h \geq 2C^3 p^{-19}$.

Furthermore, note that $|V(H)|\leq t\cdot (\gamma+4^{Chp^{-13}}+1)m\leq Md$, using the fact that $\ab{I_i}=\gamma m$, as well as \ref{pok(C)} and \ref{pok(H)}. 
We now claim that $H$ contains a $p^{13}m$-regular subgraph on more than $Cd$ vertices, which would contradict the maximality of the collection defining $G_2$.

To prove this, we first observe that
\begin{align*}
    e(H_i\cap D(K_i))&=e(H_i)-\ab{E(H_i) \setminus E(D(K_i))}\\
    &\geq e(H_i)-\ab{E(H_i) \setminus E(B(K_i))} &[B(K_i)\subseteq D(K_i)]\\
    &\geq \frac{pm^2}{4} - p^2 m^2 &[\text{\cref{pok3.6,pok4.6}}]\\
    &=\left(\frac p4 - p^2\right) m^2.
\end{align*}
Let $G'$ be a maximum-sized $p^{13}m$-regular subgraph of $\bigcup_{i=1}^{t} H_i$.
\begin{subclaim}
    $|V(G')\cap V(H_i\cap D(K_i))|\geq pm/20$ for all $i$. 
\end{subclaim}
\begin{proof}
Indeed, suppose otherwise that for some $i$, $|V(G')\cap V(H_i\cap D(K_i))|< pm/20$. 
Then 
\begin{align*}
    e((H_i\cap D(K_i))\setminus V(G'))&\geq e(H_i\cap D(K_i))-|V(G')\cap V(H_i\cap D(K_i))|\cdot|V(H_i)|\\
    &\geq \left(\frac p4-p^2\right)m^2-\frac{pm}{20}\cdot m\\
    &\geq \frac{pm^2}8\\
    &=\frac{p|V(H_i)|^2} 8\\
    &\geq \frac{p|V(H_i\cap D(K_i))\setminus V(G'))|^2}8.
\end{align*}
Set $r \coloneqq |V(H_i\cap D(K_i))\setminus V(G')|\geq \sqrt{e((H_i\cap D(K_i))\setminus V(G'))}\geq \sqrt{p/8}\cdot m$, and let $\eta=p/8$. Note too that $\eta \leq 1/2$ and that $r$ is, by definition, sufficiently large in terms of $p$ to apply \cref{pok4.5}. As $p^{13}m \leq 0.4 \eta^3 r$, we conclude from \cref{pok4.5} that 
$(H_i\cap D(K_i))\setminus V(G')$ contains a $p^{13}m$-regular subgraph, which by definition is vertex-disjoint from $G'$. Adding it to $G'$ contradicts the maximality of $G'$, and concludes the proof of the sub-claim.
\end{proof}
We may now apply inclusion-exclusion to find that
\begin{align*}
    \ab{V(G')} &\geq \sum_{i=1}^t \ab{V(G')\cap V(H_i \cap D(K_i))} - \sum_{1 \leq i<j\leq t} \ab{V(G') \cap V(H_i \cap D(K_i)) \cap V(H_j \cap D(K_j))}\\
    &\geq t \cdot \frac{pm}{20} - \sum_{1 \leq i<j\leq t} \ab{V(D(K_i)) \cap V(D(K_j))}\\
    &\geq t\cdot\frac{pm}{20}-t^2\cdot \kappa m\\
    &=m \cdot Ch\left(\frac{p^{-2}}{20} - Ch p^{-6}\kappa \right).
\end{align*}
Recalling that $Ch\kappa \leq p^{13}$ and $p \leq 10^{-3}$, we see that the parenthesized term is at least $2$. Hence $\ab{V(G')}\geq 2Chm = 2Cd$, contradicting the maximality of $G_2$. This contradiction concludes the proof of the claim. 
\end{proof}

Choose a maximal ordered subfamily $\mathcal K'=(K_1', \dots, K_r')$ of $\mathcal K$ with the property that $|V(D(K_i'))\cap (\bigcup_{K\in\mathcal K\setminus  \mathcal K'}V(D(K)) \cup \bigcup_{j>i} V(D(K_j')))|\leq \alpha d/10$ for all $i=1, \dots, r$, where we remark that $\mathcal K'$ may be empty. 

 Note that if for some $K\in \mathcal K\setminus \mathcal K'$, we have $|V(D(K))\cap \bigcup_{K'\in \mathcal K\setminus (\mathcal K'\cup \{K\})} V(D(K'))|\leq \alpha d/10$, then we could  define $K'_{r+1}\coloneqq K$ to get a bigger $\mathcal K'$, contradicting maximality. Set $\mathcal H = \{D(K) : K\in\mathcal K\setminus \mathcal K'\}$ and let $G_4$ be the union of the graphs in $\mathcal H$. We stress that $G_4$ is only the union of $D(K)$ over all $K \in \mathcal K \setminus \mathcal K'$, and in particular that $G_4$ does not contain all the edges in the clumps in $\mathcal K\setminus \mathcal K'$. Later we will count the number of edges lost here.

\begin{claim}\label{claim:G3 good}
    $G_4$ is the edge-disjoint union of a collection $\mathcal H$ of graphs satisfying the following properties.
        \begin{enumerate}
            \item Each $H\in\mathcal H$ contains a subgraph $Q$ of size at least $\beta d$ which is $\beta$-cut-dense and every vertex in $V(H)\setminus V(Q)$ has at least $\beta d$ neighbors in $V(Q)$.
            \item Any two $H,H'\in\mathcal H$ intersect in at most $\zeta d$ vertices.
            \item Every $H\in \mathcal H$ has at least $\lambda d$ vertices which intersect some other $H'\in\mathcal H$.
            \item For every $H\in\mathcal H$ there are at most $1/\lambda$ other $H'\in\mathcal H$ which intersect $H$ in more than $\gamma d$ vertices.
    \end{enumerate}
\end{claim}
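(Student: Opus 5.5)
The plan is to verify the four properties one at a time for $\mathcal H=\{D(K):K\in\mathcal K\setminus\mathcal K'\}$, using the clump properties and the claims established above. First, edge-disjointness. Each $D(K)=C(K)\cup B(K)$ is a subgraph of $K$, and the clumps in $\mathcal K$ are pairwise edge-disjoint, so the graphs in $\mathcal H$ are pairwise edge-disjoint. By definition, $G_4$ is their union.

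\emph{Property 1.} For $H=D(K)$ I will take $Q=C(K)$. I first need $k(K)\geq 1$. Each initial clump from \cref{pok4.8} has $k\geq 1$, and \cref{pok4.9} gives $k(K)\geq\max\{k(K^+),k(K^-)\}$, so this is preserved by merging. Hence $M(K)$ contains a $p^{13}m$-regular graph, and so $|V(C(K))|\geq|V(M(K))|\geq p^{13}m=p^{13}d/h\geq\beta d$.

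Next, cut-density. By \ref{pok(C)}, $C(K)$ is $\kappa^{(10k(K))!}$-cut-dense. Since $k(K)\leq Chp^{-13}$ (as noted after \cref{claim:M(Ki)_upper_bound}), its cut-density is at least $\kappa^{(10Chp^{-13})!}\geq\beta$.

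Finally, the vertices outside $Q$. Any vertex $v\in V(D(K))\setminus V(C(K))$ lies outside $V(M(K))\subseteq V(C(K))$. It is incident to an edge of $E(D(K))\setminus E(C(K))$, so it is of the form in the definition of $B(K)$. Thus $v$ has at least $pm/2$ neighbours $u\in V(M(K))$ in $K$, and every such edge $uv$ belongs to $B(K)\subseteq D(K)$. This gives $v$ at least $pm/2=pd/(2h)\geq\beta d$ neighbours in $V(Q)$ inside $H$.

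\emph{Property 2.} For distinct $K,K'\in\mathcal K$ we showed $|V(D(K))\cap V(D(K'))|<\kappa m=\kappa d/h\leq\zeta d$, using $\kappa\leq\zeta/h$.

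\emph{Property 3.} This comes from the maximality of $\mathcal K'$. As remarked, every $K\in\mathcal K\setminus\mathcal K'$ satisfies
\[
\Bigl|V(D(K))\cap\bigcup_{K''\in\mathcal K\setminus(\mathcal K'\cup\{K\})}V(D(K''))\Bigr|>\alpha d/10\geq\lambda d,
\]
and the union on the left is exactly the union of the vertex sets of the other members of $\mathcal H$.

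\emph{Property 4.} Suppose $H'=D(K')$ meets $H=D(K)$ in more than $\gamma d\geq\gamma m$ vertices. By \cref{claim:few big intersections}, there are at most $Cp^{-3}h$ such $K'$, and $Cp^{-3}h\leq 1/\lambda$ because $\lambda\leq p^3/(Ch)$.

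None of these steps is a real obstacle; the only points needing care are the constant comparisons. Specifically, one must check $k(K)\geq1$, so that $C(K)$ is large. One must also make sure the $\geq pm/2$ neighbours in Property 1 are adjacent to $v$ via edges of $H$ itself, which holds because those edges lie in $B(K)$.
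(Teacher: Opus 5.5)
Your proposal is correct and follows the paper's proof essentially line by line. You take $Q=C(K)$ for the first item, use termination of the merging process with $\kappa\le\zeta/h$ for the second, maximality of $\mathcal K'$ with $\lambda\le\alpha/10$ for the third, and \cref{claim:few big intersections} with $\lambda\le p^3/(Ch)$ for the fourth. Your explicit checks that $k(K)\geq 1$ survives merging, and that the $pm/2$ neighbours are joined to $v$ by edges of $B(K)\subseteq D(K)$, fill in details the paper leaves implicit.
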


\begin{proof}
As the clumps are edge-disjoint, we certainly have that all the graphs in the collection $\mathcal H$ are edge-disjoint, and by definition $G_4$ is the union of these graphs. For the first item, we observe that, by \ref{pok(M)} and \ref{pok(C)}, $C(K)$ contains at least $p^{13}m$ vertices and is $\kappa^{(10k(K))!}$-cut-dense. Further, $D(K)$ is the union of $C(K)$ and $B(K)$. Therefore, it is obtained from $C(K)$ by adding vertices with at least $pm/2$ neighbors in $V(C(K))$. Setting $Q=C(K)$ and using the definition of $\beta$, we get that $Q$ contains at least $\beta d$ vertices, is $\beta$-cut-dense and every vertex in $V(D(K))\setminus V(Q)$ has at least $\beta d$ neighbors in $V(Q)$.

Next, we recall that when the process terminates, we necessarily have that any pair of clumps $K,K' \in \mathcal K$ satisfy $\ab{V(D(K))\cap V(D(K'))}\leq \kappa m\leq \zeta d$, recalling our choice of $\kappa \leq \zeta/h$. Thus we satisfy the second item.

Next, recall from above that for all $K \in \mathcal K \setminus \mathcal K'$, we have $|V(D(K))\cap \bigcup_{K'\in \mathcal K\setminus (\mathcal K'\cup \{K\})} V(D(K'))|\geq \alpha d/10$. This proves the third item as we ensure $\lambda \leq \alpha/10$.

For the final item, we recall \cref{claim:few big intersections}, which states that for any $K \in \mathcal K$, there are at most $Cp^{-3}h$ other clumps $K' \in \mathcal K$ with $\ab{V(D(K))\cap V(D(K'))}\geq \gamma m$. In particular, since $m\leq d$, this implies the final item, as we chose $\lambda \leq p^3/(Ch)$.
\end{proof}
 
 We proceed. From each $K_i'\in\mathcal K'$ delete the edges $E(K_i')\setminus E(B(K_i'))$, the vertices  $V(K_i')\setminus V(B(K_i'))$, and the  vertices $V(D(K_i'))\cap (\bigcup_{j>i} V(D(K_j')))$  to obtain a subgraph $J_i$. Let $G_1 = \bigcup_{i=1}^r J_i$. 
 \begin{claim}\label{claim:G4 good}
     Every connected component of $G_1$ has a cover of size at most $Cd$.
 \end{claim}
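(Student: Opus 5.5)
The plan is to show that the graphs $J_1,\dots,J_r$ are pairwise vertex-disjoint. Once this holds, every connected component of $G_1=\bigcup_i J_i$ lies inside a single $J_i$. By \cref{pok4.2} together with \cref{claim:M(Ki)_upper_bound}, $J_i$ has a vertex-cover of size at most $Cd$, namely $V(M(K_i'))\cap V(J_i)$, and this finishes the proof.

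The cover property of each $J_i$ comes first. Its edges are by construction a subset of $E(B(K_i'))$: we delete $E(K_i')\setminus E(B(K_i'))$, and then delete vertices, which removes only more edges. By \cref{pok4.2}, every edge of $B(K_i')$ has an endpoint in $V(M(K_i'))$. Therefore $V(M(K_i'))$, restricted to the vertices that survive in $J_i$, covers $J_i$. By \cref{claim:M(Ki)_upper_bound}, $|V(M(K_i'))|\le Cd$. This already gives a cover of size at most $Cd$ for $J_i$, and hence for any connected subgraph of $J_i$.

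Next comes disjointness. Take $i<j$ and suppose some vertex $v$ lies in $V(J_i)\cap V(J_j)$. Since $J_j\subseteq B(K_j')\subseteq D(K_j')$, we have $v\in V(D(K_j'))$. Likewise $v\in V(J_i)\subseteq V(B(K_i'))\subseteq V(D(K_i'))$. But $J_i$ was formed by deleting every vertex of $V(D(K_i'))\cap\bigcup_{j'>i}V(D(K_{j'}'))$, and $v$ is such a vertex because $j>i$. This is a contradiction, so the $J_i$ are pairwise vertex-disjoint. Any edge of $G_1$ lies in some $J_i$, and a path in $G_1$ cannot pass from $J_i$ to $J_j$ without a shared vertex. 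Hence each component of $G_1$ is contained in a single $J_i$ and inherits the cover above.

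The only real subtlety is bookkeeping. One has to check that $V(B(K))\subseteq V(D(K))$, which holds since $D(K)=C(K)\cup B(K)$. One also has to check that the vertex deletions in the construction of $J_i$ use the same sets $V(D(\cdot))$ as in the definition of $\mathcal K'$. This is immediate from the construction, so I do not expect a genuine obstacle here. The substantive work, namely edge-counting of the deleted edges using the $\alpha d/10$ bound and \cref{pok4.6,pok4.4}, belongs to the later accounting for $G_0$ and not to this claim.
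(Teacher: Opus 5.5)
Your proposal is correct and follows essentially the same route as the paper. The $J_i$ are pairwise vertex-disjoint because $V(D(K_i'))\cap\bigcup_{j>i}V(D(K_j'))$ is deleted and $V(J_j)\subseteq V(B(K_j'))\subseteq V(D(K_j'))$. Hence each component of $G_1$ lies in a single $J_i\subseteq B(K_i')$, which is covered by $V(M(K_i'))$; this has size at most $Cd$ by \cref{pok4.2} and the bound $|V(M(K))|\le Cd$.

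Your disjointness step also uses $V(J_i)\subseteq V(D(K_i'))$, which makes it slightly more explicit than the paper's phrasing. The paper tacitly relies on deleting $V(K_i')\setminus V(B(K_i'))$ to handle vertices of $J_j$ that lie outside $D(K_i')$.
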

 \begin{proof}
 We first claim that all the $J_i$'s are vertex-disjoint. Indeed, fix some $i<j$, and note that $V(J_j)\subseteq V(B(K_j'))\subseteq  V(D(K_j'))$, which implies that all vertices of $J_j$ were deleted from $K_i'$ when forming $J_i$, so they are vertex-disjoint as claimed. 
Hence $G_1$ is a vertex-disjoint union of $J_1,\dots,J_r$, implying that every connected component of $G_1$ is contained in some $J_i$.
 
Since each $J_i\subseteq B(K_i')$  has a cover of size at most $ Cd$ (namely $V(M(K_i'))$ by \cref{pok4.2}), the connected components of $G_1$ have covers of size at most $Cd$.  
\end{proof}
Finally, we let $G_0 = G \setminus (G_1\cup G_2 \cup G_3 \cup G_4)$, and we now turn to estimating $e(G_0)$, or equivalently the number of edges of $G$ absent from $G_1\cup G_2\cup G_3\cup G_4$. Every such edge is either in $E(K_i')\setminus E(J_i)$ for some $1 \leq i \leq r$, or in $E(K) \setminus E(D(K))$ for some $K \in \mathcal K \setminus K'$. Note that every edge in $E(K_i') \setminus E(J_i)$ is either contained in $E(K_i') \setminus E(B(K_i'))$, or else it is an edge in $B(K_i')$, one of whose endpoints is in $V(D(K_i'))\cap (\bigcup_{j>i}V(D(K_j')))$. The total number of edges of this second type is at most
\begin{align*}
    \sum_{i=1}^r \ab{V(B(K_i'))}\cdot \bab{V(D(K_i'))\cap \left(\bigcup_{j>i}V(D(K_j'))\right)} & \leq \sum_{i=1}^r \ab{V(B(K_i'))}\cdot \frac{\alpha d}{10}\\
    &\leq \sum_{i=1}^r \frac{\alpha d} {p^{13}m} e(K_i') &[\text{\cref{pok4.4}}]\\
    &=\alpha hp^{-13} \sum_{i=1}^r e(K_i')\\
    &\leq \alpha hp^{-13}e(G).
\end{align*}
We now note that all the other edges of $G$ which are not in $G_1 \cup G_2 \cup G_3 \cup G_4$ are in $E(K) \setminus E(B(K))$ for some $K \in \mathcal K$. For the clumps $K \in \mathcal K'$ this is automatic, and for the clumps $K \in \mathcal K\setminus \mathcal K'$ it follows from the fact that $D(K)\supseteq B(K)$, and hence the set $E(K) \setminus E(D(K))$ of missing edges is contained in the set $E(K) \setminus E(B(K))$. Thus, the total number of remaining edges we deleted is at most
\[
\sum_{K \in \mathcal K}\ab{E(K)\setminus E(B(K))} \leq \sum_{K \in \mathcal K}4p e(K) = 4p\sum_{K \in \mathcal K} e(K) \leq 4pe(G),
\]
where we use \cref{pok4.6} in the first inequality. In total, we find that
\[
e(G_0)\leq (\alpha h p^{-13}+4p)e(G) \leq \left(\frac \varepsilon 2 +\frac \varepsilon 2\right) e(G)= \varepsilon e(G),
\]
as claimed, which concludes the proof.
\end{proof}

\end{document}